\numberwithin{equation}{section}
\newcommand{\beq}{\begin{equation}}
\newcommand{\eeq}{\end{equation}}
\newcommand{\beqs}{\begin{eqnarray*}}
\newcommand{\eeqs}{\end{eqnarray*}}
\newcommand{\beqn}{\begin{eqnarray}}
\newcommand{\eeqn}{\end{eqnarray}}
\newcommand{\beqa}{\begin{array}}
\newcommand{\eeqa}{\end{array}}
\def\lra{\longrightarrow}
\def\bc{\begin{center}}
\def\ec{\end{center}}
\def\begeq{\begin{equation}}
\def\endeq{\end{equation}}
\def\and{\quad{\rm and}\quad}
\let\lra=\longrightarrow
\def\mapright\#1{\,\smash{\mathop{\lra}\limits^{\#1}}\,}
\newtheorem{prop}{Proposition}[section]
\newtheorem{theo}[prop]{Theorem}
\newtheorem{lem}[prop]{Lemma}
\newtheorem{claim}[prop]{Claim}
\newtheorem{cor}[prop]{Corollary}
\newtheorem{rem}[prop]{Remark}
\newtheorem{defi}[prop]{Definition}
\newtheorem{conj}[prop]{Conjecture}
\newtheorem{q}[prop]{Question}
\begin{document}

\title{Bergman Kernels for a  sequence of almost K\"{a}hler-Ricci solitons}

\author {Wenshuai Jiang}
\author  {Feng Wang }
\author { Xiaohua $\text{Zhu}^*$}

\thanks {* Partially supported by the NSFC Grants 11271022 and 11331001}
 \subjclass[2000]{Primary: 53C25; Secondary:  53C55,
 58J05}
\keywords {K\"ahler-Einstein metrics,  almost K\"ahler-Ricci solitons, Ricci flow,   $\bar\partial$-equation. }

\address{ Wenshuai Jiang \\School of Mathematical Sciences, Peking University,
Beijing, 100871, China}

\address{ Feng Wang\\School of Mathematical Sciences, Peking University,
Beijing, 100871, China}

\address{ Xiaohua Zhu\\School of Mathematical Sciences and BICMR, Peking University,
Beijing, 100871, China\\
 xhzhu@math.pku.edu.cn}

%\begin{document}
\bibliographystyle{plain}

\begin{abstract} In this paper,   we give  a lower bound of  Bergman kernels for  a sequence of  almost K\"{a}hler-Einstein  Fano manifolds,  or more general, a  sequence of  Fano manifolds with
 almost K\"{a}hler-Ricci solitons.  This  generalizes  a result by Donaldson-Sun,  Tian for  K\"{a}hler-Einstein  manifolds sequence  with positive scalar curvature. As an application of our result,
we prove that the Gromov-Hausdorff limit  of  sequence is   homomorphic to a log terminal $Q$-Fano variety which admits a  K\"{a}hler-Ricci soliton  on its smooth part.
\end{abstract}

\maketitle

\tableofcontents

\date{}
\maketitle
\bibliographystyle{plain}

\section{Introduction}

 Let $(M^n, g)$ be an $n$-dimensional  Fano manifold  with its K\"ahler form $\omega_g$ in $2\pi c_1(M)$. Then  $g$  induces a   Hermitian metric  $h$ of
 the  anti-canonical  line bundle $K_M^{-1}$
  such that ${\rm Ric }(K_M^{-1}, h)=\omega_g$.   Also $h$ induces a  Hermitian metric  ( for simplicity, we  still use the notation $h$ ) of  $l$-multiple
line   bundle   $K_M^{-l}$.  As usual,   the   $L^2$-inner product on $H^0(M,K_M^{-l})$  is given by
\begin{align}\label{inner-product}  ( s_1,s_2)=\int_M\langle s_1,s_2\rangle_h d{\rm v}_g ,~ \forall~  s_1, s_2~\in H^0(M,K_M^{-l}).
\end{align}
 Choosing  a unit orthogonal  basis  $\{s_i\}$ of  $H^0(M,K_M^{-l})$ with respect to the inner product $(\cdot,\cdot)$ in (\ref{inner-product}),   we define
the Bergman kernel of   $(M,K_M^{-l}, h)$  by
\begin{align}
\rho_l(x)=\Sigma_i |s_i|^2_h(x).\notag
\end{align}
Clearly,  $\rho_l(x)$ is independent of  the choice  of basis $\{s_i\}$.
 In \cite{T4}, Tian proposed a conjecture for the existence of uniform  lower bound of $\rho_l(x)$:

 \begin{conj}\label{tian-conjecture} Let  $\{(M_i,   g^i)\}$  be a sequence of   $n$-dimensional K\"{a}hler-Einstein
  manifolds with constant  scalar curvature $n$.  Then   there exists
   an integer number $l_0$ such that for any integer $l>0$ there exists a uniform constant $c_l >0$ with property:
\begin{align}\label{lower-bound-bergman}
\rho_{ll_0}(M_{i}, g^{i})\geq c_l.\end{align}
Here $c_{l}$ depends only on $l, n$.
\end{conj}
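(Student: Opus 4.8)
The plan is to prove this by contradiction, following the partial $C^0$-estimate strategy of Tian and of Donaldson--Sun: pass to a Gromov--Hausdorff limit, use the Cheeger--Colding--Tian structure theory to describe it, and use H\"ormander's $L^2$-estimate for $\bar\partial$ to build holomorphic ``peak sections'' of $K_{M_i}^{-l_0}$ that are large at the bad points. It suffices to find a universal integer $l_0=l_0(n)$ and, for each $l$, a constant $c_l=c_l(l,n)>0$ with $\rho_{ll_0}\ge c_l$. Suppose this fails for some $l$, so that along a subsequence there are points $x_i\in M_i$ with $\rho_{ll_0}(M_i,g^i)(x_i)\to 0$. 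The normalization gives $\mathrm{Ric}(g^i)=g^i$, so Bonnet--Myers bounds the diameters and Bishop--Gromov bounds the volumes from above, while $\mathrm{Vol}(M_i,g^i)=(2\pi)^n(-K_{M_i})^n/n!\ge (2\pi)^n/n!$ keeps the metrics uniformly non-collapsed; hence, passing to a further subsequence, $(M_i,g^i,x_i)\to(Y,d_Y,y_\infty)$ in the pointed Gromov--Hausdorff topology.

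By Cheeger--Colding--Tian, $Y=\mathcal R\sqcup\mathcal S$, where $\mathcal R$ is an open smooth K\"ahler--Einstein manifold to which $g^i$ converges in $C^\infty_{\rm loc}$ under the Gromov--Hausdorff almost-isometries, and $\mathcal S$ is closed of Hausdorff codimension $\ge 4$; correspondingly the Hermitian holomorphic line bundles $(K_{M_i}^{-1},h_i)$, of curvature $\omega_{g^i}$, converge on $\mathcal R$ to a Hermitian holomorphic line bundle whose curvature is the limit K\"ahler form. One then analyzes a tangent cone of $Y$ at $y_\infty$: choosing $l_0=l_0(n)$ large enough, via a secondary compactness argument over all the limit spaces and tangent cones that can occur, one arranges that the local holomorphic model at the vertex carries a section $\sigma$ of the $l_0$-th power (hence of every $(ll_0)$-th power, by tensoring with powers that already exist on the cone) which is non-vanishing at the vertex and has uniformly bounded $L^2$-norm; when $y_\infty\in\mathcal R$ one may simply take $\sigma\equiv 1$ in a holomorphic normal chart.

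Transplant $\sigma$ to a neighborhood $U$ of $y_\infty$ in $\mathcal R$ and multiply it by a cutoff $\chi$ equal to $1$ near $y_\infty$ and vanishing near $\mathcal S\cup\partial U$; the codimension bound on $\mathcal S$ permits a choice of $\chi$ with $\int|\nabla\chi|^2$ arbitrarily small (a capacity estimate for $\mathcal S$). Since the curvature of $(K_{M_i}^{-ll_0},h_i)$ is $ll_0\,\omega_{g^i}>0$, H\"ormander's $L^2$-estimate---after inserting an auxiliary weight to dominate the error from $\bar\partial\chi$---solves $\bar\partial u_i=\bar\partial(\chi\sigma)$ on $M_i$, the entire construction being carried over from $\mathcal R$ by the almost-isometry, with $\|u_i\|_{L^2}$ small. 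Then $s_i:=\chi\sigma-u_i\in H^0(M_i,K_{M_i}^{-ll_0})$; normalizing $s_i$ and applying a mean-value/elliptic bound for $|u_i|^2$ shows $|s_i|^2_{h_i}(x_i)\ge c(l,n)>0$, so $\rho_{ll_0}(M_i,g^i)(x_i)\ge c(l,n)$, contradicting the assumption.

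The crux---and the main obstacle---is making the $\bar\partial$-step uniform near $\mathcal S$: one needs (i) a quantitative capacity bound for the singular set forcing $\int|\nabla\chi|^2\to 0$, (ii) a weight function, built from the distance to $\mathcal S$ or from a Green-type function, that keeps the effective curvature strictly positive on $\mathrm{supp}\,\chi$ with constants independent of $i$, and (iii) a scale-uniform version of the $C^\infty_{\rm loc}$ convergence on $\mathcal R$. For the announced generalization to Fano manifolds carrying almost K\"ahler--Ricci solitons---where $\mathrm{Ric}(g^i)$ is only close to $g^i$ plus a Lie-derivative term---the same scheme applies once $d{\rm v}_{g^i}$ and $h_i$ are replaced by the weighted data $e^{\theta_i}\,d{\rm v}_{g^i}$ and one uses the corresponding weighted $L^2$-estimate for $\bar\partial$, the weight $e^{\theta_i}$ being uniformly two-sided bounded by the (almost) soliton identity.
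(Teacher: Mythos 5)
Your outline reproduces the global architecture of the Donaldson--Sun/Tian argument (which is also the backbone of this paper's generalization, where an extra Ricci-flow smoothing step is inserted because the almost-K\"ahler--Einstein sequence does not converge in $C^\infty_{\rm loc}$ on the regular part). Note first that the paper does not prove this statement itself --- it is quoted as a theorem of Donaldson--Sun and of Tian --- so the comparison is with their scheme as reflected in Sections 4--6 and 9 here. Within that scheme your proposal has one genuine gap at its core: you never explain \emph{how} a section $\sigma$ of a definite power $K_{M_i}^{-l_0}$ is actually produced near a singular point of the limit. Saying that ``via a secondary compactness argument \dots the local holomorphic model at the vertex carries a section of the $l_0$-th power'' and that one then ``transplants'' it begs the question, because the model object on the tangent cone $C_x$ is the \emph{trivial} bundle $L_0=(C_x,\mathbb C)$ with weight $e^{-\rho_x^2/2}$, whose curvature is the cone metric, while the object you must work with is $K_{M_i}^{-l_j}$ restricted to a region at scale $r_j$ with $l_j=r_j^{-2}$. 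These two Hermitian bundles agree only up to a nearly flat line bundle $K^{-l_j}\otimes L_0^{*}$ over an annular region of the regular part of the cone, and trivializing that nearly flat bundle requires killing its holonomy in $\Hom(\pi_1,\mathbb S^1)$; the pigeonhole argument supplies a power $l\le N$ (with $N$ depending on the region, hence ultimately controlled by the secondary compactness over tangent cones) for which an almost-parallel unit section exists. This is Lemma \ref{fundamental-lemma-2} and Proposition \ref{almost-falt-bundle} of the paper, it is where $l_0$ actually comes from, and no amount of compactness alone substitutes for it. Relatedly, the cutoff and the $\bar\partial$-correction must be performed at the rescaled scale $r_j/\sqrt{l}$, and the pointwise lower bound is first obtained at a nearby regular point $q$ and then transported to $p_i$ using the uniform gradient estimate $\|\nabla s\|\le Cl^{(n+1)/2}\|s\|_{L^2}$; working ``in a neighborhood of $y_\infty$ in $\mathcal R$'' at unit scale as written does not reach the singular points.

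Two smaller remarks. Your ``auxiliary weight to dominate the error from $\bar\partial\chi$'' is not how the $L^2$-step is made uniform in the standard proof: one instead proves the eigenvalue bound $\lambda_1(\Delta_{\bar\partial})\ge l/4$ on $K^{-l}$-valued $(0,1)$-forms from the Bochner--Kodaira identity (using the Ricci lower bound), which gives $\|u_i\|_{L^2}^2\le 4l^{-1}\|\bar\partial(\chi\sigma)\|_{L^2}^2$ with no weight at all; constructing a weight with uniformly controlled curvature near $\mathcal S$ is itself a nontrivial problem you would then have to solve. Finally, your opening reduction by contradiction quantifies over a fixed $l_0$ before $l_0$ has been determined; this is repairable, but the correct logical order is the one in Section 9: local construction gives $l_y$ and $c_y$ stable under Gromov--Hausdorff perturbation, a finite cover of each limit gives $l_0(Y)=\prod l_{y_\alpha}$, and a finite cover of the precompact moduli space gives $l_0$ and $c_l$ depending only on $n$ and $l$.
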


The above  conjecture was recently proved by Donaldson-Sun \cite{DS} and Tian \cite{T6},  independently. \footnote{Phong-Song-Strum extended the result to a  sequence of   K\"{a}hler-Ricci solitons  lately \cite{PSS}.} The main  idea  in their  proofs  is to use the H\"ormander $L^2$-estimate to  construct   peak  holomorphic sections  by solving  $\overline\partial$-equation.   This   idea can  go back to Tian's  orginal work \cite{T1}  (see also a survey paper by him \cite{T4}).   In fact he used the idea to prove   the conjecture  for   the  K\"{a}hler-Einstein  surfaces   more than twenty years ago \cite{T2}.

The estimate  (\ref{lower-bound-bergman})  is usually called   the partial $C^0$-estimate.  Very recently,   (\ref{lower-bound-bergman})  was generalized to   a  sequence of  conical  K\"{a}hler-Einstein  manifolds by Tian \cite{T5}.  As an application of (\ref{lower-bound-bergman}) he gives  a  proof of the  famous  Yau-Tian-Donaldson's conjecture for the existence problem of K\"ahler-Einstein metrics with positive scalar curvature. Chen-Donaldson-Sun also gives  a proof of  the conjecture    independently  \cite{CDS}.

 \begin{theo}[Tian, Chen-Donaldson-Sun]\label{TCDS} A Fano manifold admits a  K\"{a}hler-Einstein metric if and only if it is $K$-stable.
\end{theo}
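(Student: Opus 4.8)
The plan is to treat the two implications separately. That a Fano manifold admitting a Kähler--Einstein metric $\omega_{KE}\in 2\pi c_1(M)$ is $K$-(poly)stable is the comparatively soft direction: along any normal test configuration $(\mathcal{M},\mathcal{L})$ for $(M,K_M^{-1})$ one forms the induced ray of Bergman-type metrics, and the Ding (equivalently Mabuchi) functional is convex and proper along it, so its asymptotic slope --- which by a Chow/CM computation equals a positive multiple of the Donaldson--Futaki invariant of the configuration --- is non-negative, and strictly positive unless the configuration is a product induced by a holomorphic vector field. Under the stability convention in force this yields $K$-stability. The substance is the reverse implication, $K$-stability $\Rightarrow$ existence of $\omega_{KE}$, which I would establish by a continuity method along conical Kähler--Einstein metrics.

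Fix $\lambda\in\NN$ and a smooth divisor $D\in|-\lambda K_M|$, and for $\beta\in(0,1]$ seek $\omega_\beta\in 2\pi c_1(M)$ with
\[
{\rm Ric}(\omega_\beta)=\beta\,\omega_\beta+\tfrac{1-\beta}{\lambda}\,[D],
\]
a Kähler--Einstein metric with cone angle $2\pi\beta$ along $D$. Let $S\subset(0,1]$ be the set of such $\beta$. One shows $S\neq\varnothing$ for $\beta$ near $0$ (solving the associated complex Monge--Amp\`ere equation, following Berman or the log-continuity method), and that $S$ is open via Donaldson's Schauder theory for the linearised operator in weighted Hölder spaces modelled on the cone; the only potential obstruction, a nonzero holomorphic vector field tangent to $D$ in the kernel, is excluded because such a field would destabilise $M$ along the deformation to the normal cone of $D$, contradicting $K$-stability. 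The real work is the closedness of $S$.

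To prove closedness, take $\beta_i\uparrow\beta_\infty=\sup S$ with solutions $\omega_{\beta_i}$. Uniform volume, diameter and (conical) Ricci bounds give, through Cheeger--Colding--Tian structure theory adapted to the conical setting, Gromov--Hausdorff subconvergence $(M,\omega_{\beta_i})\to(Z,d_\infty)$ with $Z$ smooth away from a closed set of Hausdorff codimension $\geq 4$, plus a real-codimension-two conical stratum along the limit of $D$. Here the partial $C^0$-estimate --- the uniform Bergman-kernel lower bound of Conjecture~\ref{tian-conjecture}, extended to this conical/soliton framework exactly as in the main results of the present paper --- is the decisive input: it furnishes a single $l_0$ and enough peaked pluri-anticanonical sections to embed all $(M,\omega_{\beta_i})$ and the limit $Z$ simultaneously into a fixed $\PP^N$ by $K_M^{-l_0}$, with the embeddings converging. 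Consequently $Z$ is a normal projective variety, in fact a $\QQ$-Fano variety with klt singularities carrying a weak conical Kähler--Einstein metric on $Z_{\rm reg}$, and $\omega_{\beta_i}\to\omega_\infty$ in a strong topology on $Z_{\rm reg}$. If $Z\cong M$ as polarised pairs then $\beta_\infty\in S$; otherwise $Z$ is a genuine degeneration of $M$.

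In that remaining case one produces from the Gromov--Hausdorff family an algebraic test configuration $(\mathcal{M},\mathcal{L})$ for $(M,K_M^{-1})$ with central fibre $Z$ --- the delicate algebraisation step, combining a Luna slice and Matsushima-type reductivity argument for $\Aut(Z)$ with boundedness of $\QQ$-Fanos of fixed dimension and volume --- and computes its Donaldson--Futaki invariant: it equals the modified Futaki invariant of the generator of the induced $\CC^*$-action on $Z$ (nonzero when $Z\not\cong M$), while existence of the conical Kähler--Einstein limit forces $DF\leq 0$, with equality only for products. A non-product destabilising configuration with $DF\le 0$ contradicts $K$-stability of $M$, so $Z\cong M$ and $\beta_\infty=1\in S$, yielding $\omega_{KE}$. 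The main obstacle is precisely this closedness argument, and within it the passage from metric (Gromov--Hausdorff) convergence to \emph{algebraic} degeneration: proving that $Z$ is a normal $\QQ$-Fano variety with mild singularities and that the resulting $\CC^*$-degeneration is algebraic, so that $K$-stability can be invoked, is the technical heart of the whole argument and rests on the uniform Bergman-kernel estimate developed here, reinforced by a careful analysis of tangent cones and of the singular set of $Z$.
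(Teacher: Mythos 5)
This theorem is not proved in the paper at all: it is quoted, with attribution, from Tian \cite{T5} and Chen--Donaldson--Sun \cite{CDS}, and the paper's own contribution (the partial $C^0$-estimate for almost K\"ahler--Einstein sequences) is only one ingredient of those external proofs. So there is no internal proof to compare yours against; what you have written is a roadmap of the published proofs, and as a roadmap it is largely faithful. But it is not a proof: essentially every sentence of your outline stands in for a separate, paper-length theorem (Berndtsson convexity and the slope formula for the Ding functional; Donaldson's openness theory in conical H\"older spaces; the conical extension of Cheeger--Colding--Tian; the partial $C^0$-estimate for conical metrics; normality, klt-ness and reductivity of the automorphism group of the limit; the algebraization of the degeneration and the log-Futaki computation). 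Citing these would be legitimate, but you assert them without citation or argument.

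Two steps are, moreover, not merely compressed but misstated. First, in the ``easy'' direction you invoke convexity \emph{and properness} of the Ding/Mabuchi functional along a Bergman ray; properness fails in the presence of automorphisms, and the genuine argument uses Berndtsson's convexity along the (weak) geodesic ray attached to the test configuration together with a uniqueness theorem to handle the equality/product case --- without that, you only get $K$-semistability. Second, your openness argument claims the sole obstruction is a holomorphic vector field tangent to $D$, excluded ``because it would destabilise $M$ along the deformation to the normal cone.'' That is not how openness is proved: the obstruction is the cokernel of the linearised operator in Donaldson's weighted spaces, and it is killed by an eigenvalue estimate (a conical Lichnerowicz/Bakry--\'Emery bound valid for the relevant range of $\beta$), not by $K$-stability, which enters only through the log-Futaki invariant at the endpoint. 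Finally, your closing claim that ``existence of the conical K\"ahler--Einstein limit forces $DF\le 0$'' needs the full algebraization machinery (the limit $Z$ must first be shown to be a normal $\QQ$-Fano in the closure of the $PGL(N+1)$-orbit of $M$, and the relevant invariant is the log-Donaldson--Futaki invariant at angle $\beta_\infty$, not $DF$ itself); as stated the sign argument does not close the loop.
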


The $K$-stability was first introduced by Tian \cite{T3} and it was  reformulated by Donaldson  in terms of   test-configurations \cite{Do}.

In  this  paper,  we want to generalize the estimate  (\ref{lower-bound-bergman}) to  a sequence of  almost K\"{a}hler-Einstein  Fano manifolds \cite{TW}, or more general, a  sequence of  almost K\"{a}hler-Ricci solitons (see Definition  \ref{almost-kr-solitons} in Section 7).    Namely, we prove

\begin{theo}\label{main-theorem-wang-zhu}
Let $\{(M_i,   g^i)\}$    be a sequence of     almost K\"{a}hler-Einstein  Fano  manifolds   ( or a  sequence of  Fano manifolds with  almost K\"{a}hler-Ricci solitons)  with dimension  $n\ge 2$.  Then   there exists   an integer number $l_0$ such that for any integer $l>0$ there exists a uniform constant $c_l>0$ with property:
\begin{align}\label{lower-bound-bergman-tian}
\rho_{ll_0}(M_{i}, g^{i})\geq c_l.
\end{align}
Here  the constant $c_l$ depends only on $ l, n$, and some  uniform geometric  constants (cf. Section 9).
\end{theo}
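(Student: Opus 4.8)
The plan is to follow the Donaldson--Sun/Tian strategy for the partial $C^0$-estimate, but to carry it through in the setting where the metrics $g^i$ are only \emph{almost} K\"ahler-Einstein (resp. almost K\"ahler-Ricci soliton), so the Ricci curvature is not identically equal to the K\"ahler form but only $C^0$-close to it in a suitable integral/uniform sense. First I would extract the uniform geometric hypotheses that the "almost" condition provides: a uniform lower bound on the Ricci curvature (or a uniform $L^p$ bound on $\mathrm{Ric}-\omega_g$), a non-collapsing volume lower bound, and a uniform Sobolev/Poincar\'e inequality. These are exactly what is needed to run Cheeger--Colding theory, so that after passing to a subsequence $(M_i,g^i,p_i)\to (M_\infty,d_\infty,p_\infty)$ in the pointed Gromov--Hausdorff sense, with $M_\infty$ a metric space having a well-behaved regular/singular decomposition, tangent cones that are metric cones, and a limit measure with Euclidean volume density on the regular set.

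The core of the argument is local and happens near a fixed point $x\in M_\infty^{\mathrm{reg}}$, or more precisely near a point whose tangent cone is $\mathbb{C}^n$. The plan:
(i) Fix a good scale $r$ and use Gromov--Hausdorff approximations to transport an "almost holomorphic chart" from the limit cone $\mathbb{C}^n$ back to each $M_i$; equivalently, produce on $B_r(x_i)\subset M_i$ an almost-holomorphic, almost-isometric map to a ball in $\mathbb{C}^n$, together with an approximately parallel unit section $\sigma_i$ of $K_{M_i}^{-l_0}$ with a sharp Gaussian-type pointwise profile $|\sigma_i|_h^2 \approx e^{-l_0 |z|^2/2}$ and $\bar\partial\sigma_i$ small. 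The construction of $\sigma_i$ uses the fact that on the model $(\mathbb{C}^n, \omega_{\mathrm{Eucl}})$ the anticanonical bundle with its flat metric has the explicit Gaussian peak section, plus a cut-off.
(ii) Apply the H\"ormander $L^2$-estimate on $(M_i,g^i,K_{M_i}^{-l_0}\otimes K_{M_i}^{-1})$ to solve $\bar\partial u_i = \bar\partial\sigma_i$ with $\|u_i\|_{L^2}$ controlled by $\|\bar\partial\sigma_i\|_{L^2}$; here one needs the curvature of $h^{l_0}$ to dominate, which is where a uniform positive lower bound on $l_0\,\mathrm{Ric} - (\text{error})$ enters — in the almost-KE case the Ricci form is $C^0$-close to $\omega_g$, so for $l_0$ large the Bochner--Kodaira term is strictly positive with a uniform gap, and in the soliton case one absorbs the $\nabla X$-type term the same way as in \cite{PSS}.
(iii) Correct $\sigma_i$ to a genuine holomorphic section $s_i = \sigma_i - u_i$, use elliptic/sub-mean-value estimates (from the uniform Sobolev inequality and Ricci lower bound) to upgrade the $L^2$-smallness of $u_i$ to a $C^0$-smallness at the center, and conclude $|s_i|_h^2(x_i)\ge c>0$, hence $\rho_{l_0}(M_i,g^i)(x_i)\ge c$. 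Finally, a covering/compactness argument over $M_\infty$ (every point is within bounded distance of a point with nearly-Euclidean tangent cone, by the volume-density and cone-splitting results) plus a contradiction/subsequence argument promotes this to the claimed uniform bound $\rho_{l_0}(M_i,g^i)\ge c$ on all of $M_i$; passing from $l_0$ to $ll_0$ is routine by taking tensor powers of the peak section.

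The main obstacle, and where most of the work lies, is Step (i)--(ii) \emph{with uniform constants}: one must show that the Gromov--Hausdorff convergence can be made compatible with the holomorphic/metric structure well enough that $\bar\partial\sigma_i\to 0$ in $L^2$ at a \emph{definite rate} and that the H\"ormander estimate constant does \emph{not} degenerate along the sequence. Concretely this requires (a) a uniform Cheeger--Colding $\epsilon$-regularity statement giving definite-size almost-Euclidean balls with quantitative harmonic/holomorphic coordinates, and (b) controlling the almost-KE error in the pointwise curvature term of the Bochner formula — in \cite{DS},\cite{T6} the equation $\mathrm{Ric}=\omega$ makes this term exactly $(l_0-1)\omega + (\text{Hessian of cutoff})$, whereas here the error from $\mathrm{Ric}-\omega_g$ must be shown to be dominated uniformly, which is precisely the point of the "almost" quantitative hypotheses collected in Section 9. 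I expect the soliton case to need only cosmetic changes over the almost-KE case once the vector field $X$ is uniformly bounded in $C^1$, following the scheme in \cite{PSS}.
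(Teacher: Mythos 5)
There is a genuine gap, and it sits exactly where the paper's new work is. Your argument treats the ``almost'' hypothesis as if it gave pointwise ($C^0$) closeness of ${\rm Ric}(g^i)$ to $\omega_{g^i}$ (``the Ricci form is $C^0$-close to $\omega_g$, so for $l_0$ large the Bochner--Kodaira term is strictly positive with a uniform gap''). But the definition of almost K\"ahler--Einstein only provides an \emph{integral} bound, $\int_{M_i}|{\rm Ric}(g^i)-g^i|\,{\rm dv}_{g^i}\to 0$, together with a Ricci lower bound and a diameter bound. With only $L^1$ control of the Ricci deviation, two essential steps of your plan fail at $t=0$: (a) the sequence has no local smooth (or even $C^0$-Ricci) convergence, so you cannot transport the Gaussian model section from the tangent cone back to $(M_i,g^i)$ with $\bar\partial\sigma_i$ small in any pointwise or definite-rate sense; and (b) the eigenvalue/H\"ormander estimate for $\Delta_{\bar\partial}$ needs pointwise control of the Ricci potential (the paper's Lemma 3.3 requires $\|\nabla u\|_g\le 1$), which an $L^1$ bound does not give. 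The paper's remedy is to run the (modified) K\"ahler--Ricci flow for a short time $t_x>0$: Sections 2--3 prove via Moser iteration that at any positive time the Ricci potential gradient and scalar curvature are under pointwise control, the pseudo-locality theorem gives local smooth convergence of $g^i_{t_x}$ near the cone, and the whole Donaldson--Sun/Tian construction is carried out for the metrics $g^i_{t_x}$. One then needs an extra comparison step (Lemma 6.2 and its soliton analogue), based on maximum-principle and Green-function bounds for the flow potential $\phi$, to convert the lower bound $|s_i|_{h^i_{t_x}}(p_i)\ge 1/8$ and the $L^2$ normalization at time $t_x$ into the corresponding statements for the original metrics $g^i$. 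Your proposal contains neither the smoothing nor this back-transfer, and without them the scheme of \cite{DS}, \cite{T6} does not close.

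A secondary issue: you localize only at points whose tangent cone is $\mathbb{C}^n$ and hope to reach arbitrary points by a covering argument. The gradient bound $\|\nabla s\|\lesssim l^{(n+1)/2}\|s\|_{L^2}$ only propagates a lower bound on $|s|$ over distances of order $l^{-(n+1)/2}$, which shrinks with $l$; near singular points of $M_\infty$ the nearest almost-Euclidean point at the relevant scale can be too far away. The paper instead performs the construction at \emph{every} $x\in M_\infty$ using its (possibly singular) tangent cone $C_x$, which forces the cut-off lemma supported away from ${\rm Sing}(C_x)$ and the almost-flat-bundle lemma (passage to a power $l\le N$ determined by the holonomy of $K^{-1}$ over the regular part of the cone); these ingredients cannot be dispensed with.
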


As in the proof of Theorem \ref{TCDS},   we  need to construct   peak  holomorphic sections  by solving  $\overline\partial$-equation to prove
Theorem \ref{main-theorem-wang-zhu}.  Because  there is a lack of local  strong convergence of  $\{(M_i,  g^i)\}$,   we  shall smooth
the sequence  to  approximate  the original one by  Ricci flow as in \cite{TW}, \cite{WZ2}.  This approximation   will depend  on  points in the
Gromov-Hausdorff  limit space of   $\{(M_i,  g^i)\}$,   so it depends  on  the time $t$ in the   Ricci flow.  Thus  we  need
to give estimates for the scalar curvatures and K\"ahler  potentials along the flow for small time $t$ (cf. Section 2, 3, 7).   Another technical part is in the  construction of    peak  holomorphic sections by using  the rescaling method as in \cite{DS},  \cite{T6},  which will depend on the choice of K\"ahler metrics evolved in the  Ricci flow  in our case (cf. Section 5, 6, 7).

 Together with  the main results in \cite{WZ1} and \cite{WZ2},  Theorem \ref{main-theorem-wang-zhu} implies

\begin{cor}\label{algebaric-structure}
Let $\{(M_i,   g^i)\}$    be a sequence of  almost K\"{a}hler-Einstein  Fano  manifolds  ( or a  sequence of  Fano manifolds  with almost K\"{a}hler-Ricci solitons) with dimension $n\ge 2$.  Then  $\{(M_i,   g^i)\}$ (maybe replaced by   a  subsequence of   $\{(M_i,   g^i)\}$)  converges
 to  a  metric space  $(M_\infty,g_\infty)$ in Gromov-Hausdorff topology with   properties:

 i)~ The codemenison of singularities  of   $(M_\infty,g_\infty)$  is  at least $4$;

ii)~     $g_\infty$  is  a K\"{a}hler-Einstein metric ( or a K\"{a}hler-Ricci soliton)   on the regular part of $ M_\infty$;

iii)~    $M_\infty$  is homomorphic to a log terminal $Q$-Fano variety.

\end{cor}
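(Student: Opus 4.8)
The plan is to derive Corollary \ref{algebaric-structure} by combining the partial $C^0$-estimate of Theorem \ref{main-theorem-wang-zhu} with the convergence and structure results already established in \cite{WZ1} and \cite{WZ2}, following the scheme pioneered by Donaldson-Sun \cite{DS} and Tian \cite{T6}. First I would invoke the Gromov's precompactness theorem together with the uniform geometric bounds (Bishop-Gromov volume bound, diameter bound, and the non-collapsing estimate coming from the almost K\"ahler-Ricci soliton hypothesis) to extract a subsequence of $\{(M_i, g^i)\}$ converging in Gromov-Hausdorff topology to a metric space $(M_\infty, g_\infty)$. The results of \cite{WZ1}, \cite{WZ2} (analogues of Cheeger-Colding-Tian theory adapted to the soliton setting) then give that $M_\infty$ has a regular-singular decomposition $M_\infty = \Reg(M_\infty) \sqcup \sing(M_\infty)$ with the singular set of Hausdorff codimension at least $4$ (this is statement i)), and that $g_\infty$ restricts to a smooth K\"ahler-Einstein metric, respectively K\"ahler-Ricci soliton, on $\Reg(M_\infty)$ (statement ii)). These two items require no new input beyond the cited works.

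The substance lies in statement iii): upgrading $M_\infty$ to a complex analytic, indeed projective algebraic, log terminal $Q$-Fano variety. Here the strategy is to use the uniform lower bound $\rho_{ll_0}(M_i, g^i) \ge c_l$ from Theorem \ref{main-theorem-wang-zhu} to produce, for some fixed $l_0$ and all large $i$, a uniformly nondegenerate Kodaira-type embedding $\Phi_i : M_i \hookrightarrow \PP^{N}$ via the orthonormal basis of $H^0(M_i, K_{M_i}^{-l_0})$ (with $N+1 = \dim H^0(M_i, K_{M_i}^{-l_0})$, which is uniformly bounded by Riemann-Roch plus the volume bound). The partial $C^0$-estimate guarantees that the Fubini-Study pullback metrics $\Phi_i^*\omega_{FS}$ are uniformly equivalent to $l_0 \omega_{g^i}$, so the images $V_i = \Phi_i(M_i) \subset \PP^N$ have uniformly bounded degree and hence, by Chow's theorem and the compactness of the Chow variety (or Hilbert scheme), a subsequence of the $V_i$ converges to a limiting projective variety $V_\infty \subset \PP^N$. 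One then identifies $V_\infty$ with $M_\infty$ as metric spaces: the Gromov-Hausdorff convergence $M_i \to M_\infty$ and the algebraic convergence $V_i \to V_\infty$ are shown to be compatible by a standard gradient-estimate / Moser-iteration argument bounding holomorphic sections and their derivatives, so that the embeddings $\Phi_i$ converge to a homeomorphism $\Phi_\infty : M_\infty \to V_\infty$ which is a biholomorphism on the regular part. This endows $M_\infty$ with the structure of a normal projective variety with $K_{M_\infty}^{-1}$ ample (a $Q$-Fano variety), and the codimension $\ge 4$ bound on $\sing(M_\infty)$ together with the existence of the K\"ahler-Einstein/soliton metric on the smooth locus forces the singularities to be log terminal — this last implication is the algebro-geometric counterpart established in \cite{WZ1}, \cite{WZ2} (following Donaldson-Sun).

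I expect the main obstacle to be the identification step: showing that the algebraic limit $V_\infty$ and the metric limit $M_\infty$ genuinely coincide, and in particular that $\Phi_\infty$ is a homeomorphism rather than merely a continuous surjection that could collapse or duplicate pieces. This requires two-sided control: a lower bound on $\rho_{l_0}$ away from the singular set (injectivity and immersivity of $\Phi_i$ at definite scale, which is exactly where Theorem \ref{main-theorem-wang-zhu} and the construction of peak sections enter) together with a uniform upper bound and derivative estimate on the sections $s_i$ obtained from the Bochner-Kodaira formula and the geometric bounds. A delicate point, already present in \cite{DS} but compounded here by the lack of local strong convergence of $\{(M_i, g^i)\}$, is that the peak sections are built after the Ricci-flow smoothing and their estimates depend on the time parameter $t$; one must let $t \to 0$ carefully, controlling the scalar curvature and K\"ahler potential along the flow, to transfer the estimates back to the original metrics $g^i$. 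Granting the results of \cite{WZ1} and \cite{WZ2} — which is explicitly allowed — the remaining work is to assemble these pieces, and the corollary follows.
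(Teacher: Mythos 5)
Your overall scheme---Gromov precompactness plus the Cheeger--Colding--Tian-type results of \cite{TW}, \cite{WZ2} for parts i) and ii), then Kodaira-type embeddings, Chow limits, and identification of the algebraic and metric limits for part iii)---is the same as the paper's, and your discussion of the injectivity/identification step corresponds to Lemma \ref{strong-version} and Proposition \ref{variety}. One point you leave implicit there: the separating sections for different pairs $x,y\in M_\infty$ live in different powers $\ell(x,y)$ of $K^{-1}$, and the paper needs the surjectivity relation (\ref{cohomology}) (a Siu-type multiplication statement, extended in the Appendix to the Bakry--\'Emery setting for the soliton case) to conclude injectivity of a single map $T_{(n+2+[\Lambda^2])l,\infty}$ at one fixed power.

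The genuine gap is in your passage from ``normal projective variety'' to ``$Q$-Fano''. By the definition used in the paper, $W$ is $Q$-Fano when $\mathcal{O}_{\mathbb{CP}^N}(1)|_W$ is a multiple of $K_W^{-1}$ on the smooth part of the \emph{variety}; the construction only identifies $\mathcal{O}(1)$ with $K^{-k_0l}$ on $W\setminus\mathcal{\hat S}$, the complement of the image of the \emph{metric} singular set. A priori the variety could be smooth at points of $\mathcal{\hat S}$, and there the anticanonical identification is not yet established, so writing ``$K_{M_\infty}^{-1}$ ample (a $Q$-Fano variety)'' begs the question. The paper closes this by proving $W_s=\mathcal{\hat S}$: if a metric singular point mapped to a variety-smooth point $p$, the limit K\"ahler potential $u$ would satisfy the local soliton equation (\ref{local-equation}) on a ball minus $\mathcal{\hat S}$ with the uniform two-sided bound (\ref{equivalence-metric}), and the regularity lemma (difference-quotient $C^{1,1}$ estimate plus Evans--Krylov) would extend $u$ smoothly across $\mathcal{\hat S}$, contradicting metric singularity at $p$. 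This PDE step is the technical heart of Section 8 and is absent from your proposal. Without it neither the $Q$-Fano claim nor the log terminal conclusion is justified; note also that the paper derives log terminality from Proposition 3.8 of \cite{BBEGZ} (a $Q$-Fano variety carrying a K\"ahler--Einstein metric or K\"ahler--Ricci soliton on its smooth part is automatically log terminal), not from \cite{WZ1}, \cite{WZ2} as you suggest.
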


 In case of  K\"{a}hler-Einstein  manifolds with positive scalar curvature,  we note that  i) and ii) in  Corollary \ref{algebaric-structure} follow from  the Cheeger-Colding-Tian compactness theorem  \cite{CCT}.   Donaldson-Sun proved the part iii )  except  log    terminal property   \cite{DS}  (also see  \cite{L1}).
Since any  $Q$-Fano variety, which  admits a  K\"ahler-Einstein metric,  is  automatically    log terminal according to Proposition 3.8 in \cite{BBEGZ}\footnote{ The result  also holds for a  $Q$-Fano variety, which  admits a  K\"ahler-Ricci soliton,  from the proof of Proposition 3.8.}, Thus the part iii) is true. A normal variety $M$ is called  $Q$-Fano if the restriction of $\mathcal O_{\mathbb CP^N }(1)$ is a multiple of $K_{M}^{-1}$ on the smooth part of $M$.  The log terminal means that  there exists a resolution
$\pi:~\tilde M\to M$ such that
$K_{\tilde M}=\pi^*K_M+\sum a_iD_i, $ where $a_i>-1$, $\forall ~i.$

There are important  examples   of   almost K\"{a}hler-Einstein  metrics and  almost K\"{a}hler-Ricci solitons:

1)~ Tian and  B. Wang  constructed a  family  of  almost K\"ahler-Einstein  metrics   $g_t$ $(t\to 1)$ arising from  solutions of certain complex Monge-Amp\`ere equations on a Fano manifold with the Mabuchi's $K$-energy bounded  below \cite{TW}.

2) ~Tian  constructed  a family of   almost K\"ahler-Einstein  metrics  $g_t$ $(t\to 1)$ modified  from
 conical K\"ahler-Einstein  metrics  on a Fano manifold  whose corresponding conical angles go to $2\pi$ \cite{T5}.

3)~  F. Wang and Zhu   constructed  a  family  of  almost K\"{a}hler-Ricci solitons  $g_t$ $(t\to 1)$  arising from  solutions of certain complex Monge-Amp\`ere equations
 on a Fano manifold with the modified   $K$-energy bounded  below  \cite{WZ1},  \cite{WZ2}.

  By Theorem \ref{main-theorem-wang-zhu} and Corollary \ref{algebaric-structure},    we have

\begin{cor}\label{k-energy-bounded}
Let $g_t$ $(t\to 1)$ be  a  family  of  almost K\"ahler-Einstein  metrics ( or almost
 K\"ahler-Ricci  solitons ) on a  Fano manifold  $M$ constructed  above 1), 2), 3).  Then  there exists   an integer number $l_0$ such that for any integer $l>0$ there exists a uniform constant $c_l>0$  independent of  $t$ with property:
\begin{align}\label{lower-k-energy}
\rho_{ll_0}(M,g_t)\geq c_l>0.
\end{align}
 Moreover,  there exists a sequence  $\{(M,   g_t)\}$ which   converges
 to  a  metric space  $(M_\infty,g_\infty)$ in Gromov-Hausdorff topology with   properties i),ii),iii) in Corollary \ref{algebaric-structure}.

\end{cor}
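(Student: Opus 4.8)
The plan is to obtain Corollary \ref{k-energy-bounded} as a direct consequence of Theorem \ref{main-theorem-wang-zhu} and Corollary \ref{algebaric-structure}. The only thing to verify is that each of the three families $\{g_t\}$ $(t\to 1)$ in 1), 2), 3), read along an arbitrary sequence $t_i\to 1$, is a sequence of almost K\"ahler-Einstein Fano manifolds (resp.\ of Fano manifolds with almost K\"ahler-Ricci solitons) in the sense of Definition \ref{almost-kr-solitons}, and that the geometric constants attached to these sequences are bounded uniformly in $t$.

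\emph{Step 1: uniform geometric estimates.} For the family in 1), the $g_t$ come from solutions of a complex Monge-Amp\`ere equation along a continuity path under the hypothesis that the Mabuchi $K$-energy is bounded below; the a priori estimates of Tian and B. Wang \cite{TW} give, uniformly for $t$ close to $1$, a fixed lower Ricci bound ${\rm Ric}(g_t)\ge c\,g_t$ with $c>0$, a volume non-collapsing bound, a diameter bound, and the almost-Einstein control $\int_M|{\rm Ric}(g_t)-g_t|^p\,d{\rm v}_{g_t}\to 0$. For the family in 2), Tian's construction from conical K\"ahler-Einstein metrics with cone angles tending to $2\pi$ \cite{T5} supplies the same package. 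For the family in 3), the analogous estimates --- now also including uniform control of the fixed, normalized soliton holomorphic vector field and of the modified scalar curvature --- follow from F. Wang and Zhu \cite{WZ1}, \cite{WZ2} under the modified $K$-energy lower bound. In each case the constants depend only on $M$, on $n$ and on the bound for the (modified) $K$-energy (resp.\ on the conical data), hence not on $t$.

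\emph{Step 2: conclusions.} Fix any sequence $t_i\to 1$. By Step 1 the sequence $\{(M,g_{t_i})\}$ satisfies the hypotheses of Theorem \ref{main-theorem-wang-zhu} with uniform geometric constants, so there are an integer $l_0$ and, for every integer $l>0$, a constant $c_l>0$ depending only on $l$, $n$ and those constants with $\rho_{ll_0}(M,g_{t_i})\ge c_l$; since the constants do not depend on $t$ neither does $c_l$, and as $t_i\to 1$ was arbitrary this is (\ref{lower-k-energy}). Applying instead Corollary \ref{algebaric-structure} to $\{(M,g_{t_i})\}$, after passing to a subsequence one obtains Gromov-Hausdorff convergence to a metric space $(M_\infty,g_\infty)$ with properties i), ii), iii), which is the ``moreover'' statement.

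\emph{Main obstacle.} The substance is entirely in Step 1: one must check that the a priori estimates of \cite{TW}, \cite{T5}, \cite{WZ1}, \cite{WZ2} assemble into exactly the list of uniform geometric quantities required by Definition \ref{almost-kr-solitons} and Theorem \ref{main-theorem-wang-zhu}, and, crucially, that these bounds are uniform in $t$ and not merely finite for each fixed $t$. Granting this, Corollary \ref{k-energy-bounded} is immediate from Theorem \ref{main-theorem-wang-zhu} and Corollary \ref{algebaric-structure}.
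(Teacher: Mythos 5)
Your proposal is correct and follows the same route as the paper, which derives this corollary directly from Theorem \ref{main-theorem-wang-zhu} and Corollary \ref{algebaric-structure} once the families in 1), 2), 3) are known (from \cite{TW}, \cite{T5}, \cite{WZ1}, \cite{WZ2}) to satisfy the defining conditions of almost K\"ahler-Einstein metrics, respectively almost K\"ahler-Ricci solitons, with constants uniform in $t$. The paper treats exactly this verification as the content supplied by the cited references and offers no further argument, so your Step 2 is the whole proof.
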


 It was proved recently  by Li that    the lower boundedness  of $K$-energy  is  equivalent  to  the
 $K$-semistablity  [L2].  Li's proof depends on the construction of test-configurations  in  Theorem \ref{TCDS}  by studying  conical  K\"{a}hler-Einstein metrics.
 It is reasonable to believe that there is   an analogy of Li's result to describe  the  modified   $K$-energy  in sense of  modified  $K$-semistability.

 \begin{q}\label{question}  Is  there a direct proof (without using   conical  K\"{a}hler-Einstein metrics as in the proof of  Theorem \ref{TCDS})  for that  the $K$-stability implies the   $K$-energy bounded  below?
 \end{q}

A same question was  proposed    by   Paul  in his recent paper \cite{Pa}.  He proved there that the $K$-stability is equivalent to the properness of  $K$-energy in  the space of K\"ahler metrics induced
by  the Bergman Kernels.  Thus  as pointed by Tian in \cite{T6}, \cite{T7} (also see \cite{T3}),   (\ref{lower-k-energy}) will give  a  new proof  of Theorem \ref{TCDS}   if  the answer to Question \ref{question} is positive.

Finally, let us to  state our organization to the paper.  In Section 2,    we give  some  estimates  for   scalar curvatures and K\"ahler  potentials along  the Ricci  flow,     then, in Section 3,  we use  them to give  the $C^0$-estimate and  the gradient estimate for holomorphic sections on multiple line bundles of $K_M^{-1}$.
Section 4 is devoted  to construct almost    peak  holomorphic sections by  using the trivial bundle on the tangent cone.
The peak  holomorphic sections,  which  depend on time $t$,  will be constructed   in Section 5. Theorem \ref{main-theorem-wang-zhu} will be proved in Section 6, 7,  according to  almost K\"{a}hler-Einstein   metrics and  almost K\"{a}hler-Ricci solitons, respectively,   while  its proof is completed  in Section 9.  In Section 8,  we prove Corollary  \ref{algebaric-structure}.

\vskip3mm

\noindent {\bf Acknowledgements.}   The authors would like to thank
professor G.  Tian for his encourage and  many valuable discussions   during working on the paper.

\vskip3mm

\section{Estimates  from  K\"{a}hler Ricci flow}

In this section,  we give some necessary estimates  for  the scalar curvatures and K\"ahler  potentials along  the K\"ahler-Ricci  flow.
 Let $(M,g)$ be an $n$-dimensional
Fano manifold  with its K\"ahler form $\omega_g$ in $2\pi c_1(M)$.    Let $g_t=g(\cdot,t)$ be a  solution of normalized K\"{a}hler Ricci flow,
\begin{align}\left\{\begin{aligned}\label{Ricci flow}
&\frac{\partial}{\partial t}g=-{\rm{ Ric }}(g)+g,\\
&g_0=g(\cdot,0)=g.
 \end{aligned} \right.
\end{align}
Recall an  estimate for  Sobolev constants of $g_t$   by Zhang \cite{Zh}.

\begin{lem}\label{Sobolev}
Let $g_t$ be the solution of (\ref{Ricci flow}). Suppose that   there exists a Sobolev  constant $C_s$  of $g$  such that  the  following  inequality holds,
\begin{align}\label{sobolev-constant-g}
(\int_M f^{\frac{2n}{n-1}}{\rm{dv}}_g)^\frac{n-1}{n}\leq C_s(\int_M f^2{\rm{d v}}_g+\int_M|\nabla f|^2{\rm{dv}}_g), ~\forall ~f\in C^1(M).
\end{align}
Then there exist two uniform constants  $A=A(C_s, -\inf_M R(g),V)$ and $ C_0=C_0(C_s, -\inf_M R(g),V)$ such that  for any $f\in C^1(M)$ it holds
\begin{align}
(\int_M f^{\frac{2n}{n-1}}{\rm{dv}}_{g_t})^\frac{n-1}{n}\leq A(\int_M(|\nabla f|^2+(R_t+C_0)f^2){\rm{dv}}_{g_t},
\end{align}
where $R_t$  are  scalar curvatures of $g_t$.

 \end{lem}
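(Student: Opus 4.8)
The plan is to derive the evolving Sobolev inequality from the fixed one for $g_0 = g$ by controlling how the volume form and the gradient term change along the flow \eqref{Ricci flow}. The key observation is that under the normalized K\"ahler-Ricci flow one has $\frac{\partial}{\partial t}\,{\rm dv}_{g_t} = (n - R_t)\,{\rm dv}_{g_t}$, and the total volume $V = \int_M {\rm dv}_{g_t}$ is preserved (since $\omega_{g_t}$ stays in $2\pi c_1(M)$). First I would record the standard fact, from Perelman's no-local-collapsing estimates or Ye's and Zhang's work on the Sobolev inequality under Ricci flow, that the $\lambda$-functional (or equivalently the lowest eigenvalue of $-4\Delta + R$) is monotone along \eqref{Ricci flow}; this monotonicity, combined with the hypothesis \eqref{sobolev-constant-g} at $t = 0$ and a lower bound on the scalar curvature, produces a time-independent constant controlling the functional
$$\mathcal{W}(f, t) = \int_M \left( |\nabla f|^2 + \tfrac14 R_t f^2 \right) {\rm dv}_{g_t}$$
from below by $c \left( \int_M f^{2} {\rm dv}_{g_t} \right)$ up to the right scaling.

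Next I would upgrade this eigenvalue-type estimate to the full Sobolev inequality with exponent $\frac{2n}{n-1}$. The mechanism here is Zhang's argument: the uniform positivity of the operator $-4\Delta + R + C_0$ (the shift by $C_0$ absorbs the possibly negative part of $R_t$, using $-\inf_M R(g)$ and Perelman's lower bound $R_t \ge -C$ preserved along the flow) together with the non-inflating volume bound yields, by a Moser iteration / Nash-type argument, a Sobolev inequality of the stated form. The constants $A$ and $C_0$ that emerge depend only on the input data $C_s$, $-\inf_M R(g)$, and $V$, because every step — the monotonicity of $\lambda$, the scalar curvature lower bound, the volume conservation — is controlled by exactly these quantities and is uniform in $t \in [0, T]$ for the maximal existence time of the flow on a Fano manifold.

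The main obstacle, and the point requiring the most care, is the passage from the spectral lower bound (an $L^2$-to-$L^2$ statement) to the genuine Sobolev embedding (an $L^2$-to-$L^{\frac{2n}{n-1}}$ statement) with constants that do \emph{not} degenerate as $t \to 0$ or depend on the geometry of $g_t$ beyond the stated invariants; this is where one genuinely invokes \cite{Zh}, since naively the Sobolev constant of $g_t$ could blow up without the flow-monotonicity input. Everything else — the evolution of the volume form, conservation of $V$, the scalar curvature lower bound — is routine and I would not belabor it. In fact, since the statement is quoted directly from Zhang's paper \cite{Zh}, the cleanest route is simply to cite that reference for the precise form of $A$ and $C_0$ and indicate the two ingredients above as the conceptual skeleton.
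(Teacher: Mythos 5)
Your proposal is consistent with what the paper does: the paper gives no proof of this lemma at all, but simply recalls it as Zhang's uniform Sobolev inequality under Ricci flow and cites \cite{Zh}, which is exactly where your plan ultimately lands. Your sketch of the internals (initial Sobolev inequality $\Rightarrow$ log-Sobolev with controlled constants, Perelman's entropy monotonicity transporting it along the flow, maximum-principle lower bound on $R_t$ and volume conservation, then the upgrade back to the $L^{2n/(n-1)}$ inequality with the shifted operator) is a faithful outline of Zhang's argument, so there is nothing to compare beyond noting that the paper skips even this outline.
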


By using the Moser  iteration method, we have

\begin{lem}\label{iteration} Let $\Delta=\Delta_t$  be  Lapalace operators associated to  $g_t$.  Suppose that
     $f\ge 0$   satisfies
\begin{align}\label{equation}
(\frac{\partial}{\partial t}-\Delta)f\leq af, ~\forall~t\in (0,1),
\end{align}
where  $a\geq 0$ is a constant.   Then for any  $t\in (0,1)$, it holds
\begin{align}\label{sobolev-iteration-1}
&\sup_{x\in M}f(x,t)\notag\\
&\leq \frac{C}{t^\frac{n+1}{p}}(\int_{\frac{t}{2}}^t\int_M|f(x,\tau)|^p \text{{\rm{dv}}}_{g_{\tau}}d\tau)^\frac{1}{p},
\end{align}
where $C= C(a,p,C_s, -\inf R(g),V)$ , $p\ge 1$ and  $C_s$ is the Sobolev constant of $g$ in (\ref{sobolev-constant-g}).
\end{lem}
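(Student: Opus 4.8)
The plan is to run the standard parabolic Moser iteration, using Lemma \ref{Sobolev} to supply the Sobolev inequality at each time slice. First I would fix $t\in(0,1)$ and work on the time interval $[t/2,t]$. Since $f\ge 0$ satisfies $(\partial_\tau-\Delta_\tau)f\le af$, for any exponent $q\ge 1$ one multiplies the inequality by $q f^{q-1}$ (after a harmless regularization, replacing $f$ by $f+\eps$ and letting $\eps\to 0$, since $f^{q-1}$ may degenerate), integrates over $M$ with respect to ${\rm dv}_{g_\tau}$, and uses $\int_M f^{q-1}\Delta f\,{\rm dv}_{g_\tau}=-(q-1)\int_M f^{q-2}|\nabla f|^2\,{\rm dv}_{g_\tau}$ together with the evolution ${\partial_\tau}({\rm dv}_{g_\tau})=(n-R_\tau)\,{\rm dv}_{g_\tau}$ coming from (\ref{Ricci flow}). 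This yields a differential inequality of the form
\begin{align}
\frac{d}{d\tau}\int_M f^q\,{\rm dv}_{g_\tau}+\frac{4(q-1)}{q}\int_M|\nabla f^{q/2}|^2\,{\rm dv}_{g_\tau}\le C(a,n)\,q\int_M f^q\,{\rm dv}_{g_\tau}+\int_M R_\tau f^q\,{\rm dv}_{g_\tau}.
\end{align}
The term $\int_M R_\tau f^q\,{\rm dv}_{g_\tau}$ is exactly the kind of term that Lemma \ref{Sobolev} is designed to absorb: it reappears on the right of the Sobolev inequality applied to $f^{q/2}$, with the constant $C_0$ producing only a further $\int_M f^q$ term.

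Next I would apply Lemma \ref{Sobolev} with $f$ replaced by $f^{q/2}$ to control $\big(\int_M f^{q\frac{n}{n-1}}\big)^{\frac{n-1}{n}}$ by $A\big(\int_M|\nabla f^{q/2}|^2+(R_\tau+C_0)f^q\big)$, substitute this into the differential inequality, and absorb the gradient and curvature terms. Writing $\chi=\frac{n}{n-1}$ and $u=f^q$, after integrating in $\tau$ over subintervals $[\tau_1,\tau_2]\subset[t/2,t]$ and using a cutoff $\eta(\tau)$ in time (with $|\eta'|$ comparable to $1/(\tau_2-\tau_1)$) to pass from an integral bound to a sup-in-time bound, one obtains the standard reverse-Hölder step
\begin{align}
\Big(\int_{\tau_2}^{t}\!\!\int_M f^{q\chi\frac{n+1}{n}}\,{\rm dv}_{g_\tau}d\tau\Big)^{\frac{n}{n+1}}\le \frac{C\,q^{\theta}}{\tau_2-\tau_1}\int_{\tau_1}^{t}\!\!\int_M f^{q}\,{\rm dv}_{g_\tau}d\tau
\end{align}
for suitable fixed $\theta>0$, where the parabolic Sobolev exponent $\chi\frac{n+1}{n}=\frac{n+1}{n-1}>1$ is what makes the iteration converge. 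Iterating with $q=q_k=p\,(\tfrac{n+1}{n-1})^{k}$ and time levels $\tau_k=t-\tfrac{t}{2}(1-2^{-k})$ decreasing toward $t/2$, and multiplying the resulting geometric series of constants (the $\prod_k q_k^{\theta/q_k}$ and $\prod_k 2^{k/q_k}$ products converge), gives
\begin{align}
\sup_{M\times[t/2,\,t/2+\epsilon]}f\le \frac{C}{\,(t/2)^{\frac{n+1}{p}}}\Big(\int_{t/2}^{t}\!\!\int_M f^{p}\,{\rm dv}_{g_\tau}d\tau\Big)^{\frac{1}{p}},
\end{align}
and a further trivial bound of the sup over the whole slab $[t/2,t]$ by iterating once more (or by choosing the time cutoffs so the final interval is $[3t/4,t]$ and noting monotonicity of the integral) produces (\ref{sobolev-iteration-1}). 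The power $t^{-(n+1)/p}$ is dictated by the parabolic scaling: the measure ${\rm dv}_{g_\tau}d\tau$ scales like length$^{n\cdot 2+2}=$length$^{2(n+1)}$ and the $k=0$ term in the iteration contributes $\big(\text{length}^{2(n+1)}\big)^{-1/p}$.

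The main obstacle is bookkeeping of the curvature term rather than any conceptual difficulty: unlike the classical Moser iteration on a fixed manifold, here both the metric and the Sobolev inequality depend on $\tau$, and the zeroth-order term of the operator is $R_\tau$, which is not bounded a priori uniformly along the flow. The point is that Lemma \ref{Sobolev} is precisely tailored so that $R_\tau$ appears with the right sign and coefficient to be reabsorbed by the Sobolev term at every scale of the iteration, with the constant $A$ (and hence all the iterated constants) depending only on $C_s$, $-\inf_M R(g)$, $V$, and on $a,p,n$. One must also take a little care with the time cutoff so that the "loss" in the time variable is summable; choosing $\tau_k\downarrow t/2$ geometrically makes $\sum_k (\tau_{k-1}-\tau_k)^{-1}q_k^{\theta}/q_{k-1}$-type factors combine into a single constant times $t^{-(n+1)/p}$. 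No new ideas beyond the Zhang Sobolev bound and standard parabolic iteration are needed; the statement then follows.
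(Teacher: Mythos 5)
Your proposal is correct and follows essentially the same route as the paper: multiply the differential inequality by a power of $f$, integrate by parts using the evolution of ${\rm dv}_{g_\tau}$ so that the $\int_M R_\tau f^{q}$ term appears with the sign needed to be absorbed by Zhang's Sobolev inequality (Lemma \ref{Sobolev}), insert a time cutoff, derive the reverse H\"older step, and iterate with geometrically shrinking time intervals. The only slip is your per-step exponent gain $\chi\cdot\frac{n+1}{n}=\frac{n+1}{n-1}$; the H\"older interpolation between $\sup_\tau\int_M f^{q}$ and the Sobolev term actually gives the gain $1+\frac1n=\frac{n+1}{n}$ (as in the paper's (\ref{iteration-formula})), which is what produces the stated power $t^{-(n+1)/p}$ consistent with your own parabolic-scaling check.
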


\begin{proof}
Multiplying both sides of (\ref{equation}) by $f^p$,  we have
\begin{align}
\int_Mf^pf'_\tau  \text{{\rm{dv}}}_{g_{\tau}} -\int_Mf^p\Delta f   \text{{\rm{dv}}}_{g_{\tau}}\leq a \int_Mf^{p+1}.\notag
\end{align}
Taking integration by parts, we  get
\begin{align}
&\frac{1}{p+1}\int_M (f^{p+1})'_\tau    \text{{\rm{dv}}}_{g_{\tau}} +\frac{4p}{(p+1)^2}\int_M |\nabla f^{\frac{p+1}{2}}|^2   \text{{\rm{dv}}}_{g_{\tau}}\notag\\
&\leq a   \int_Mf^{p+1}    \text{{\rm{dv}}}_{g_{\tau}}.\notag
\end{align}
Since
\begin{align}
\frac{d}{d\tau}\int_M f^{p+1}   \text{{\rm{dv}}}_{g_{\tau}}=\int_M (f^{p+1})'_\tau   \text{{\rm{dv}}}_{g_{\tau}}+\int_M f^{p+1}(n-R)   \text{{\rm{dv}}}_{g_{\tau}},\notag
\end{align}
we deduce
\begin{align}
&\frac{1}{p+1}\frac{d}{d\tau}\int_M f^{p+1}     \text{{\rm{dv}}}_{g_{\tau}}+\frac{1}{p+1}\int_M Rf^{p+1}   \text{{\rm{dv}}}_{g_{\tau}}\notag\\
&+\frac{4p}{(p+1)^2}\int_M |\nabla f^{\frac{p+1}{2}}|^2
\leq (a+\frac{n}{p+1})\int_Mf^{p+1}    \text{{\rm{dv}}}_{g_{\tau}}.\notag
\end{align}
It turns
 \begin{align}\label{gradient-integral}
&\frac{d}{d\tau}\int_M f^{p+1}    \text{{\rm{dv}}}_{g_{\tau}}+\int_M (R+C_0)f^{p+1}    \text{{\rm{dv}}}_{g_{\tau}}+2\int_M |\nabla f^{\frac{p+1}{2}}|^2\notag\\
&\leq ((p+1)a+n+C_0)\int_Mf^{p+1}   \text{{\rm{dv}}}_{g_{\tau}}.
\end{align}

For any $0\leq \sigma'\leq \sigma\leq 1$,  we define
\begin{align}
\psi(\tau)=\begin{cases}
 0, \tau \leq \sigma'  t\\
 \frac{\tau-\sigma' t}{(\sigma-\sigma')t}, \sigma' t\leq \tau\leq \sigma t\\
 1, \sigma t\leq \tau\leq t.
\end{cases}\notag
\end{align}
Then by (\ref{gradient-integral}),  we have
\begin{align}
&\frac{d}{d\tau}(\psi \int_M f^{p+1}    \text{{\rm{dv}}}_{g_{\tau}})+\psi \int_M[(R+C_0)f^{p+1}+2|\nabla f^{\frac{p+1}{2}}|^2]   \text{{\rm{dv}}}_{g_{\tau}}\notag\\
&\leq [\psi((p+1)a+n+C_0)+\psi']\int_Mf^{p+1}   \text{{\rm{dv}}}_{g_{\tau}}.\notag
\end{align}
It follows
\begin{align}
&\sup_{\sigma t\leq \tau\leq t}\int_M f^{p+1}   d\text{{\rm{v}}}_{g_{\tau}}+\int_{\sigma t}^t\int_M[(R+C_0)f^{p+1}+2|\nabla f^{\frac{p+1}{2}}|^2] d\text{{\rm{v}}}_{g_{\tau}}\notag\\
&\leq ((p+1)a+n+C_0+\frac{1}{(\sigma-\sigma')t})\int_{\sigma' t }^t\int_Mf^{p+1}  d\text{{\rm{v}}}_{g_{\tau}}.\notag
\end{align}
Thus by  Lemma \ref{Sobolev},  we get
\begin{align}\label{iteration-formula}
&\int_{\sigma t}^t \int_M f^{(p+1)(1+\frac{1}{n})}    \text{{\rm{dv}}}_{g_{\tau}}\notag\\
&\leq (\int_{\sigma t}^t\int_M  f^{p+1}    \text{{\rm{dv}}}_{g_{\tau}})^{\frac{1}{n}}(\int_M  f^{(p+1)\frac{n}{n-1}})^{\frac{n-1}{n}}\notag\\
&\leq [(\sup_{\sigma t\leq \tau\leq t}\int_M f^{p+1}    \text{{\rm{dv}}}_{g_{\tau}}]^{\frac{1}{n}} \int_{\sigma t}^t A\int_M[(R+C_0)f^{p+1}+2|\nabla f^{\frac{p+1}{2}}|^2]    \text{{\rm{dv}}}_{g_{\tau}} \notag\\
&\leq A((p+1)a+n+C_0+\frac{1}{(\sigma-\sigma')t})^\frac{n+1}{n}(\int_{\sigma' t}^t\int_Mf^{p+1}   \text{{\rm{dv}}}_{g_{\tau}})^{\frac{n+1}{n}}.
\end{align}
By choosing  $\sigma'=\frac{1}{2}+\frac{1}{4}\sigma_k, \sigma=\frac{1}{2}+\frac{1}{4}\sigma_{k+1}$, where $\sigma_k=\sum_{l=0}^k (\frac{1}{2}))^l-1$,  and  replacing $p$ by $p_{k+1}=(p_k+1)^\frac{n+1}{n}-1$ with $p_0=p\ge 0$ in (\ref{iteration-formula}), then  iterating  $k$  we   will get the  desired
 estimate (\ref{sobolev-iteration-1}).
\end{proof}

By Lemma \ref{iteration}, we prove

\begin{prop}\label{estimate-u-R}  Let  $u=u_t$   and $R=R_t$ be   Ricci potentials  and scalar curvatures  of  solutions  $g_t$   in  (\ref{Ricci flow}), respectively.
Suppose that $(M,g)$  satisfies
\begin{align}\label{Ric-con}
{\rm Ric}(g) \geq -\Lambda^2 g~{\rm and}~{\rm diam}(M,g)\leq D.
\end{align}
  Then there  exists  a  constant  $C(n,,\Lambda,D)$ such that
\begin{align}\label{gradient-estimate-2}
&|\nabla u|^2(x,t)\notag\\
&\leq \frac{C}{t^{(n+1)(n+\frac{3}{2})}}\int_{\frac{t}{2}}^t\int_M |R-n| \text{{\rm{dv}}}_{g_{\tau}} ,~~\forall~ 0< t\leq 1
\end{align}
and
\begin{align}
&|R-n|(x,t)\label{scalar-curvature}\notag\\
&\le \frac{C}{t^{(n+1)(n+\frac{3}{2})+n}}\int_{\frac{t}{2}}^t\int_M|R-n|  \text{{\rm{dv}}}_{g_{\tau}},~~ \forall ~0<t\le 1.
\end{align}
\end{prop}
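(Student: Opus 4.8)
The plan is to derive evolution (in)equalities for the Ricci potential $u_t$, its gradient $|\nabla u|^2$, and the normalized scalar curvature $R_t - n$ along the flow \eqref{Ricci flow}, and then to feed the resulting differential inequalities into the Moser iteration estimate of Lemma \ref{iteration}. The geometric hypothesis \eqref{Ric-con} together with the Bishop--Gromov volume comparison and Zhang's Sobolev estimate (Lemma \ref{Sobolev}) provides the uniform Sobolev constant $C_s = C_s(n,\Lambda,D)$ on $(M,g)$ that is needed to apply Lemma \ref{iteration}; note also that $\operatorname{Ric}(g)\geq -\Lambda^2 g$ gives a lower bound for $R(g)$ and that $\operatorname{diam}\leq D$ together with the volume normalization controls $V$, so all constants produced below depend only on $n,\Lambda,D$.

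First I would recall the standard identities along the normalized K\"ahler--Ricci flow: writing $\operatorname{Ric}(g_t) - g_t = \sqrt{-1}\,\partial\bar\partial u_t$ (with $u_t$ normalized suitably), one has $\partial_t u = \Delta u + u - a(t)$ for a spatial constant $a(t)$, and $R - n = \Delta u + ($ lower order $)$, and crucially the evolution equation $(\partial_t - \Delta)(R-n) = |\operatorname{Ric}|^2 - R \geq 0$ after accounting for the normalization — more precisely $(\partial_t-\Delta)(R-n)=|\operatorname{Ric}-g|^2+(R-n)$, so with $f = |R-n|$ (or $f = (R-n)_+$ plus a symmetric argument, or simply using that $R-n \geq -n$ is bounded below so that $|R-n|$ satisfies a subsolution inequality of the form $(\partial_t-\Delta)f \leq af$ with $a = a(n)$) one is exactly in the setting of \eqref{equation}. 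Applying Lemma \ref{iteration} with $p=1$ then yields the pointwise bound \eqref{scalar-curvature}, where the exponent $(n+1)(n+\tfrac32)+n$ is read off from the $t^{-(n+1)/p}$ factor in \eqref{sobolev-iteration-1} after the iteration in $p$ starting from the $L^1$ bound; one inserts an intermediate step bounding $\int_M |R-n|^p\,\mathrm{dv}$ by $\big(\sup|R-n|\big)^{p-1}\int_M|R-n|\,\mathrm{dv}$ and closes the loop.

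For the gradient estimate \eqref{gradient-estimate-2}, the standard trick is Bochner: the quantity $w = |\nabla u|^2$ satisfies an evolution inequality of the form $(\partial_t - \Delta)w \leq C(n)\,w + C(n)|R-n|\cdot(\text{stuff})$, but a cleaner route is to consider the combination $|\nabla u|^2 + (R - n)$ or $|\nabla u|^2 - u + a(t)$ which, by Hamilton's/Perelman's differential Harnack-type computations, satisfies a clean subsolution inequality so that Lemma \ref{iteration} applies to it directly; then one subtracts the already-controlled quantity $R-n$. Alternatively, and perhaps more in the spirit of the paper, I would apply Lemma \ref{iteration} to $f = |\nabla u|^2$ after establishing $(\partial_t-\Delta)|\nabla u|^2 \leq a|\nabla u|^2$ using $-|\nabla\nabla u|^2 - |\nabla\bar\nabla u|^2 \leq 0$ and $\operatorname{Ric}(\nabla u,\nabla u)$ bounded via the flow (here one uses that $R-n$ small controls $|\operatorname{Ric} - g|$ in $L^2$, but for the differential inequality one only needs a crude bound), obtaining $\sup|\nabla u|^2(\cdot,t) \leq C t^{-(n+1)(n+1)/1}\int_{t/2}^t\int_M |\nabla u|^2$. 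Then one bounds $\int_M|\nabla u|^2 = \int_M u(R-n) \leq \|u\|_\infty \int_M|R-n|$, and controls $\|u_t\|_\infty$ on $(t/2, t)$ by a parabolic mean-value / maximum-principle argument (or by the already-obtained scalar curvature bound integrated in time, since $u$ solves a heat equation with source $\sim R-n$), which introduces the extra factor of $t^{-(n+3/2)\cdot\text{something}}$ bringing the total exponent to $(n+1)(n+\tfrac32)$.

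The main obstacle I anticipate is the bookkeeping of the time-power exponents: one must carefully track how the $t^{-(n+1)/p}$ factor from \eqref{sobolev-iteration-1}, the extra $t$-powers from the $L^\infty$-bound on $u$ (needed to convert $\int|\nabla u|^2$ into $\int|R-n|$), and the iteration over $p$ combine to give precisely $(n+1)(n+\tfrac32)$ and $(n+1)(n+\tfrac32)+n$. The analytic content — the Bochner/evolution inequalities and the reduction of everything to an $L^1$-in-spacetime norm of $R-n$ — is routine; the delicate part is making sure the constant depends only on $n,\Lambda,D$ (via $C_s$, $V$, and $-\inf R(g)$, all of which are controlled by \eqref{Ric-con} and Bishop--Gromov) and that no hidden dependence on $t\to 0$ beyond the stated powers creeps in. I would also need to double-check the normalization so that $R-n$ (rather than $R$) is the natural quantity, since under \eqref{Ricci flow} the round-point fixed locus has $R=n$.
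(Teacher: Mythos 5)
Your overall strategy --- evolution inequalities fed into the Moser iteration of Lemma \ref{iteration}, integration by parts to convert $\int_M|\nabla u|^2$ into $\|u\|_{\infty}\int_M|R-n|$, and an oscillation bound on $u$ --- is the paper's. But there are two concrete problems.

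First, your treatment of the scalar curvature upper bound fails as stated. From $(\partial_t-\Delta)(R-n)=(R-n)+|{\rm Ric}-g|^2$ you cannot conclude that $|R-n|$, or even $(R-n)_+$, satisfies $(\partial_t-\Delta)f\le af$: the term $|{\rm Ric}-g|^2$ is nonnegative and is not pointwise controlled by $|R-n|$, so the inequality goes the wrong way precisely for the upper bound on $R$; and the remark that ``$R-n\ge -n$ is bounded below so that $|R-n|$ satisfies a subsolution inequality'' is a non sequitur. (The lower bound on $R$ is the easy direction, since $(\partial_t-\Delta)(n-R)\le (n-R)$ up to a constant.) The fix --- and the actual mechanism of the paper --- is to add $|\nabla u|^2$: since $|{\rm Ric}-g|^2=|\nabla\bar\nabla u|^2$ and $(\partial_t-\Delta)|\nabla u|^2=-|\nabla\nabla u|^2-|\nabla\bar\nabla u|^2+|\nabla u|^2$, the sum $R-n+|\nabla u|^2$ (and likewise $R+n\Lambda+|\nabla u|^2$) is a genuine subsolution. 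The paper applies Lemma \ref{iteration} to this sum, obtains $(R-n)_+\le A(t)$ with $A(t)$ equal to the right-hand side of \eqref{gradient-estimate-2}, and only then runs a second iteration on $A(t)+n-R$ to get the two-sided bound \eqref{scalar-curvature}; the extra factor $t^{-n}$ in the exponent comes from integrating the constant $A(t)$ over the spacetime slab. You do mention the combination $|\nabla u|^2+(R-n)$, but only as an optional route for the gradient estimate; it is in fact indispensable for the scalar curvature estimate.

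Second, your chain of estimates needs a seed. To obtain the crude bounds $|\nabla u|^2,\,R\le Ct^{-(n+1)}$ --- which feed the time-dependent Sobolev inequality, the volume non-collapsing, the diameter bound ${\rm diam}(M,g_t)\le Ct^{-n(n+1)}$, and hence ${\rm osc}_M u\le Ct^{-(n+1)(n+\frac12)}$ --- one must first know that $\int_0^1\int_M(R+n\Lambda+|\nabla u|^2)\,{\rm dv}_{g_t}\,dt\le C(\Lambda,D)$. This a priori spacetime $L^1$ bound is quoted from \cite{Ji} and is not produced by your sketch; without it the $L^\infty$ bound on $u$ that you invoke to close the loop has no starting point. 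Once these two points are repaired, the exponent bookkeeping is as you describe: $(n+1)+(n+1)(n+\tfrac12)=(n+1)(n+\tfrac32)$ for \eqref{gradient-estimate-2}, plus $n$ more for \eqref{scalar-curvature}.
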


\begin{proof}
By a direct computation,  we have the  the following evolution formulas  for $|\nabla u|$ and $R$, respectively,
\begin{align}\label{gardient-flow}(\frac{\partial}{\partial t}-\Delta)|\nabla u|^2
&=\Delta |\nabla u|^2-|\nabla\nabla u|^2-|\nabla\bar{\nabla}u|^2+|\nabla u|^2\le |\nabla u|^2
 \end{align}
  and
 \begin{align}\label{r-flow}
(\frac{\partial}{\partial t}-\Delta)R&=\Delta R+R-n+|\text{Ric}(g)-g|^2.
\end{align}
It follows
\begin{align}\label{r-gradient-flow}
&(\frac{\partial}{\partial t}-\Delta)(R+n\Lambda+|\nabla u|^2)\notag\\
&=R-n-|\nabla\nabla u|^2+|\nabla u|^2\leq R+n\Lambda+|\nabla u|^2.
\end{align}
Note that $R(g_t)+n\Lambda\ge 0$  by the maximum principle.
 It was proved in   \cite{Ji} that there exists a uniform constant  $C=C(\Lambda,D)$ such that
 $$\int_0^1\int_M(R+n\Lambda+|\nabla u|^2)d\text{{\rm{v}}}_g dt\le C.$$
 Then  by  Lemma \ref{iteration},  we obtain
\begin{align}
(R+n\Lambda+|\nabla u|^2)(x,t)\le \frac{C}{t^{n+1}}.
\end{align}
In particular,
\begin{align}\label{gradient-u}
|\nabla u|^2(x,t)\le \frac{C}{t^{n+1}},~~\text{   and   }~~~R\le \frac{C}{t^{n+1}}.
\end{align}

Next we estimate  the $C^0$-norm of $u_t$.  By Lemma \ref{Sobolev}
we have  the Sobolev inequality,
\begin{align}
\nonumber(\int_M f^{\frac{2n}{n-1}}\text{{\rm{dv}}}_{g_t})^\frac{n-1}{n}&\leq A(\int_M(|\nabla f|^2+(R(x,t)+C_0)f^2)\text{{\rm{dv}}}_{g_t})\\
\nonumber &\le A(\int_M(|\nabla f|^2+\frac{C}{t^{n+1}}f^2)\text{{\rm{dv}}}_{g_t}).
\end{align}
The inequality  implies  (cf. \cite{He},  \cite{Ye}),
\begin{align}
{\rm{vol}}(B(x,1))\ge C t^{n(n+1)}, ~~\forall~ x\in M.\notag
\end{align}
Since ${\rm{vol}}(M)=V$,  it is easy to obtain
\begin{align}
{\rm{diam}}(M,g_t)\le \frac{V}{Ct^{n(n+1)}}.\notag
\end{align}
Thus by  (\ref{gradient-u}),   we get
\begin{align}\label{c0-u}
\text{osc}_Mu(x,t)\le \frac{C}{t^{(n+1)(n+\frac{1}{2})}}.
\end{align}

By  (\ref{c0-u}), we can improve  (\ref{gradient-u}) to (\ref{gradient-estimate-2}). In fact,  by applying  Lemma \ref{iteration} to (\ref{gardient-flow}), we have
\begin{align}
|\nabla u|^2(x,t)&\le \frac{C}{t^{n+1}}\int_{\frac{t}{2}}^T\int_M |\nabla u|^2 \text{{\rm{dv}}}_{g_\tau}d\tau\notag\\
\nonumber &=\frac{C}{t^{n+1}}\int_{\frac{t}{2}}^t\int_M -u\Delta u\text{{\rm{dv}}}_{g_\tau}d\tau\notag\\
&\le \frac{C}{t^{n+1}}\text{osc}_{(x,\tau)\in M\times [\frac{t}{2},t]}|u|(x,\tau)\int_{\frac{t}{2}}^t\int_M |R-n|\text{{\rm{dv}}}_{g_\tau}d\tau\notag\\
&\le \frac{C'}{t^{(n+1)(n+\frac{3}{2})}}\int_{\frac{t}{2}}^t\int_M |R-n|\text{{\rm{dv}}}_{g_\tau}d\tau,
\end{align}
where the constant $C'$ depends only on $n,~\Lambda,~D$.
This proves (\ref{gradient-estimate-2}).

To get (\ref{scalar-curvature}),  we  use the evolution equation as  same as (\ref{r-gradient-flow}),
\begin{align}
(\frac{\partial}{\partial t}-\Delta)(|\nabla u|^2+R-n)&=R-n-|\nabla\nabla u|^2+|\nabla u|^2\notag
\\
&\leq |\nabla u|^2+R-n.\notag
\end{align}
Then applying  Lemma \ref{iteration}, we see
\begin{align}
(|\nabla u|^2+R-n)_+&\le \frac{C}{t^{n+1}}\int_{\frac{t}{2}}^t\int_M||\nabla u|^2+R-n|\text{{\rm{dv}}}_{g_\tau}d\tau\notag\\
&\le  \frac{C}{t^{(n+1)(n+\frac{3}{2})}}\int_{\frac{t}{2}}^t\int_M|R-n|\text{{\rm{dv}}}_{g_\tau}d\tau.\notag
\end{align}
Thus  by (\ref{gradient-estimate-2}), it follows
\begin{align}\label{A(t)}
&(R-n)_+\notag\\
&\le \frac{C}{t^{(n+1)(n+\frac{3}{2})}}\int_{\frac{t}{2}}^t\int_M|R-n|d\text{v}_{g_\tau}d\tau:=A(t).
\end{align}
 On the other hand, by the evolution equation (\ref{r-flow}) of $R$,
\begin{align}
(\frac{\partial}{\partial t}-\Delta)R=R-n+|\nabla\bar\nabla u|^2,\notag
\end{align}
we have
\begin{align}
(\frac{\partial}{\partial t}-\Delta)(A(T)+n-R)\le A(T)+n-R.\notag
\end{align}
Hence applying  Lemma  \ref{iteration} again, we get
\begin{align}
&(A(t)+n-R)(x,t)\notag\\
&\le \frac{C''}{t^{n+1}}\int_{\frac{t}{2}}^t\int_M(A(t)+n-R)\text{{\rm{dv}}}_{g_t}dt\notag\\
&\le \frac{C''}{t^{n+1}}\int_{\frac{t}{2}}^T\int_M|n-R|\text{{\rm{dv}}}_{g_\tau}d\tau+\frac{A(t)VC}{t^n}.\notag
\end{align}
 Therefore,  inserting  (\ref{A(t)})  into the above estimate,  we  obtain  (\ref{scalar-curvature}).
\end{proof}

 \section{Estimates for holomorphic sections}

 In this section, we use the estimates  in  Section 2 to give  the $C^0$-estimate and  the gradient estimate for holomorphic sections  with respect to
 $g_t$.    Let $(M^n, g)$ be a Fano manifold  and $L=K_M^{-1}$ its anti-canonical line bundle with induced  Hermitian metric  $h$ by $g$.  We begin with
 the following lemma.

 \begin{lem}\label{section-gradient-estimate}
  Suppose that the Ricci potential $u$ of $g$  satisfies
 \begin{align}\label{Ricci-potential-gradient}
 \|\nabla u\|_{g}\leq 1.
 \end{align}
Then for $s\in H^0(M,L^l)$ we have
\begin{align}\label{section-estiamte-1}
\|s\|_h+l^{-\frac{1}{2}}\|\nabla s\|_{h}\leq C(C_s,n)l^{\frac{n}{2}}(\int_M|s|^2 {\rm dv}_g)^{\frac{1}{2}},
\end{align}
where $C_s$ is the Sobolev constant of $(M,g)$.
\end{lem}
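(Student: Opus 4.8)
The plan is to prove both the $C^0$-bound and the gradient bound by a Moser iteration / mean value inequality argument, exactly parallel to Lemma \ref{iteration} but now for the elliptic Bochner-type inequalities satisfied by $|s|_h^2$ and $|\nabla s|_h^2$ on the fixed manifold $(M,g)$. The point is that under the normalization ${\rm Ric}(K_M^{-1},h)=\omega_g$, a holomorphic section $s$ of $L^l$ satisfies a Bochner formula of the form $\Delta |s|_h^2 = |\nabla s|_h^2 - l\,|s|_h^2$ (the curvature term produces the factor $-l$ because the curvature of $h$ on $L^l$ is $l\,\omega_g$); in particular $\Delta|s|_h^2 \geq -l|s|_h^2$, so $|s|_h^2$ is a subsolution of $\Delta f + l f \geq 0$. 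Since $(M,g)$ has a Sobolev constant $C_s$, the standard De Giorgi–Nash–Moser mean value inequality gives
\begin{align}
\sup_M |s|_h^2 \leq C(C_s,n)\, l^{n}\int_M |s|_h^2\, {\rm dv}_g,\notag
\end{align}
where the power $l^n$ is tracked through the iteration exactly as the power of $a$ appears in Lemma \ref{iteration} (each iteration step contributes a factor controlled by $l$, and summing the geometric series yields the exponent $n$). Taking square roots gives the first half of \eqref{section-estiamte-1}.

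For the gradient term, the strategy is to derive a differential inequality for $|\nabla s|_h^2$. Differentiating the Bochner identity, or directly using the Weitzenböck formula for $\nabla s$ together with the curvature bounds coming from ${\rm Ric}(g)$ and the hypothesis $\|\nabla u\|_g \leq 1$ (which controls the relevant Ricci-type lower bound that enters the Bochner term for the full curvature of $g$), one obtains an inequality of the shape
\begin{align}
\Delta |\nabla s|_h^2 \geq -C_1 l\, |\nabla s|_h^2 - C_2 l^2 |s|_h^2\notag
\end{align}
on $(M,g)$, where the role of $\|\nabla u\|_g\leq 1$ is precisely to absorb the lower-order geometric terms into a universal constant. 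Feeding in the $C^0$-estimate for $|s|_h^2$ already obtained and applying the Moser iteration again to $|\nabla s|_h^2$ yields
\begin{align}
\sup_M |\nabla s|_h^2 \leq C(C_s,n)\, l^{\,n+1}\int_M |s|_h^2\,{\rm dv}_g,\notag
\end{align}
i.e. $\|\nabla s\|_h \leq C l^{(n+1)/2}(\int_M|s|^2)^{1/2}$, which after multiplying by $l^{-1/2}$ gives exactly the second term of \eqref{section-estiamte-1}. Combining the two bounds finishes the proof.

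The main obstacle I expect is bookkeeping of the powers of $l$ through the iteration: one must be careful that the curvature term scales like $l$ (not $l^2$) in the equation for $|s|_h^2$, so that the mean value inequality produces $l^n$ rather than a worse power, and correspondingly that the coupling term in the $|\nabla s|_h^2$ inequality is $O(l^2)|s|_h^2$ so that after using the already-proven $C^0$ estimate one still lands on $l^{n+1}$. A secondary point is making the Bochner/Weitzenböck computation for $|\nabla s|_h^2$ clean: the cross terms involving the derivative of the curvature of $h$ vanish (the curvature of $h$ is parallel, being a constant multiple of $\omega_g$ only up to the base metric's own curvature), and the genuinely geometric error is exactly where the hypothesis $\|\nabla u\|_g\leq 1$ is used — one should phrase this so that no bound on higher derivatives of $g$ is needed, only ${\rm Ric}(g)$ controlled in terms of $\|\nabla u\|_g$ via ${\rm Ric}(g) = \omega_g - i\partial\bar\partial u$.
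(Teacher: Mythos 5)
Your first half (the $C^0$ bound for $|s|_h$) is correct and is exactly the paper's argument: $\Delta|s|_h^2=|\nabla s|_h^2-nl|s|_h^2\geq -nl|s|_h^2$ and Moser iteration with the Sobolev constant $C_s$ give $\sup_M|s|_h^2\leq Cl^n\int_M|s|^2$. The gradient half, however, contains a genuine gap. You propose the pointwise inequality $\Delta|\nabla s|_h^2\geq -C_1l|\nabla s|_h^2-C_2l^2|s|_h^2$, with the hypothesis $\|\nabla u\|_g\leq 1$ "absorbing the lower-order geometric terms". This cannot be derived from the hypothesis. The Bochner formula is
\begin{align}
\Delta|\nabla s|_h^2=|\nabla\nabla s|^2+|\bar\nabla\nabla s|^2-(n+2)l|\nabla s|^2+\langle{\rm Ric}(\nabla s,\cdot),\nabla s\rangle,\notag
\end{align}
and the problematic term is the last one: writing ${\rm Ric}(\omega_g)=\omega_g+\sqrt{-1}\partial\bar\partial u$, it equals $|\nabla s|^2$ plus a contraction of the complex Hessian $\nabla\bar\nabla u$ with $\nabla s$. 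The hypothesis bounds only $|\nabla u|$, not $|\nabla\bar\nabla u|$, so there is no pointwise lower bound on ${\rm Ric}(g)$ available and no pointwise differential inequality of your proposed form. Your closing remark that ${\rm Ric}(g)$ is "controlled in terms of $\|\nabla u\|_g$ via ${\rm Ric}(g)=\omega_g-i\partial\bar\partial u$" is precisely the false step: a gradient bound on $u$ gives no control on its Hessian.

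The correct handling --- and what the paper does --- is to keep the Ricci term under the integral sign during the Moser iteration. At each step one must estimate $\int_M\langle{\rm Ric}(\nabla s,\cdot),\nabla s\rangle|\nabla s|^{2p}$; after substituting ${\rm Ric}=\omega_g+\sqrt{-1}\partial\bar\partial u$ one integrates the Hessian term by parts so that only $\nabla u$ (bounded by $1$) appears, and the resulting terms, which involve $\nabla u$ paired with $\nabla\nabla s$, $\bar\nabla\nabla s$ and $|\nabla s|^{2p+1}$, are absorbed by Cauchy--Schwarz into the positive terms $(|\nabla\nabla s|^2+|\bar\nabla\nabla s|^2)|\nabla s|^{2p}$ that the Bochner formula supplies. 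This yields $\|\nabla s\|_\infty\leq Cl^{n/2}\|\nabla s\|_{L^2}$, and the extra factor $l^{1/2}$ then comes from the identity $\int_M|\nabla s|^2=nl\int_M|s|^2$ (integrate the first Bochner identity over $M$), rather than from feeding the sup bound on $|s|$ into a second iteration as you suggest. Your power-of-$l$ bookkeeping would be consistent if the pointwise inequality held; the missing idea is the integration by parts inside the iteration.
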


\begin{proof}
Note that
\begin{align}
\Delta |s|_h^2=|\nabla s|_h^2-nl|s|_h^2.\notag
\end{align}
It follows
\begin{align}\label{l^2-lapalace}
  -\Delta |s|_h^2\leq nl|s|_h^2.
\end{align}
Thus applying  the standard Moser iteration  method to (\ref{l^2-lapalace}),  we get
\begin{align}\label{l^2-estimate-section}
\|s\|_h\leq C(C_s,n)l^{\frac{n}{2}}(\int_M|s|^2 {\rm dv}_g)^{\frac{1}{2}}.\end{align}
On the other hand, we have the following Bochner formula,
$$\Delta|\nabla s|_h^2=|\nabla\nabla s|^2+|\bar{\nabla}\nabla s|^2-(n+2)l|\nabla s|^2+\langle\text{Ric}(\nabla s,.),\nabla s\rangle.$$
 Then we  can also apply the  Moser iteration  to obtain a $L^\infty$-estimate for $|\nabla s|_h^2$  as done for $|s|_h^2$.
In fact, it  suffices to deal with  the extra integral terms  like $\langle\text{Ric}(\nabla s,.),\nabla s\rangle|\nabla s|^{2p}$.
  But those terms  can be controlled by the integral of $(|\nabla\nabla s|^2+|\bar{\nabla}\nabla s|^2)|\nabla s|_h^{2p}$
by taking  integral  by parts with the help of the condition (\ref{Ricci-potential-gradient}) (cf. \cite{WZ2}, \cite{TZZZ}).
 As a consequence,   we obtain
\begin{align}\label{gradient-estimate-section}
\|\nabla s\|_{h}\leq C(C_s,n)l^{\frac{n}{2}}(\int_M|\nabla s|^2 {\rm dv}_g)^{\frac{1}{2}}\leq C(C_s,n)l^{\frac{n+1}{2}}(\int_M|s|^2 {\rm dv}_g)^{\frac{1}{2}}.
\end{align}
Therefore,  combining (\ref{l^2-estimate-section}) and (\ref{gradient-estimate-section}),  we derive (\ref{section-estiamte-1}).

\end{proof}
\begin{rem}\label{remark-1}
Using the same argument in Lemma \ref{section-gradient-estimate},  we can prove:  If $(M,g)$ satisfies
$${\rm Ric}(\omega_g)\geq -\Lambda^2\omega_ g+\sqrt{-1}\partial\bar{\partial}u,$$
for some $u$ with $|\nabla u|_g\le A$, then
\begin{align}\label{section-estiamte-1}
\|s\|_h+l^{-\frac{1}{2}}\|\nabla s\|_{h}\leq C(C_s,A,\Lambda)l^{\frac{n}{2}}(\int_M|s|^2 {\rm dv}_g)^{\frac{1}{2}},  ~\forall s\in H^0(M,L^l).
\end{align}
\end{rem}

\begin{lem}\label{gradient-section}
Let $(M,g)$ be  a Fano manifold  which satisfies  (\ref{Ricci-potential-gradient}) as in Lemma \ref{section-gradient-estimate}.
 Let  $\bar{\partial}$-operator  be defined for smooth sections  on  $(M, L^l)$  ( $l\geq 4n $ ) with the induced metric $h$.
 Then for any  $\sigma\in C^\infty(\Gamma(M,L^l))$,  there exists a solution $v\in C^\infty(\Gamma(M,L^l))$ such that
$\bar{\partial}v=\bar{\partial}\sigma$ with property:
\begin{align}\label{L^2}
\int_M|v|^2\leq 4l^{-1}\int_M|\bar{\partial}\sigma|^2.
\end{align}
\end{lem}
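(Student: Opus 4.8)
The plan is to apply the Hörmander $L^2$-estimate for the $\bar\partial$-operator on the line bundle $L^l = K_M^{-l}$ with its induced metric $h$, in the weighted form. Write $\sigma \in C^\infty(\Gamma(M,L^l))$ and set $\alpha = \bar\partial\sigma$, which is a smooth $L^l$-valued $(0,1)$-form with $\bar\partial\alpha = 0$. We want $v$ with $\bar\partial v = \alpha$ and the stated $L^2$-bound. The curvature of $(L^l,h)$ is $l\cdot\mathrm{Ric}(K_M^{-1},h) = l\,\omega_g$, so the Bochner--Kodaira--Nakano--type curvature term that controls the estimate is, roughly, $l\,\omega_g + \mathrm{Ric}(\omega_g)$ acting on $(0,1)$-forms; since $\mathrm{Ric}(\omega_g) = \omega_g + \sqrt{-1}\partial\bar\partial u$ (the defining relation of the Ricci potential $u$), this equals $(l+1)\omega_g + \sqrt{-1}\partial\bar\partial u$.

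The key point is to absorb the indefinite term $\sqrt{-1}\partial\bar\partial u$ by twisting the metric by a bounded weight. First I would apply Hörmander's theorem with the modified weight $h\,e^{-u}$ (equivalently, solve $\bar\partial v = \alpha$ against the metric $h e^{-u}$ on $L^l$ and the volume form of $\omega_g$): the curvature becomes $(l+1)\omega_g + \sqrt{-1}\partial\bar\partial u + \sqrt{-1}\partial\bar\partial(-u) = (l+1)\omega_g$, wait — one must be careful which sign and which copy of $\partial\bar\partial u$ appears; the correct bookkeeping is that twisting the fibre metric by $e^{-u}$ changes the curvature form of the bundle by $\sqrt{-1}\partial\bar\partial u$, so choosing the twist to cancel the $\sqrt{-1}\partial\bar\partial u$ coming from $\mathrm{Ric}(\omega_g)$ leaves a clean curvature bound $\geq (l+1)\omega_g \geq (l+1)g$ on $(0,1)$-forms (using $n\geq 1$, or more crudely $\geq \tfrac{l}{2}g$ once $l\geq 4n$, with plenty of room). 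Hörmander's estimate then produces $v$ with
\begin{align*}
\int_M |v|^2_{he^{-u}}\,\mathrm{dv}_g \;\leq\; \frac{1}{\lambda}\int_M |\alpha|^2_{he^{-u}}\,\mathrm{dv}_g,
\end{align*}
where $\lambda$ is the lower bound for the curvature eigenvalue on $(0,1)$-forms; taking $\lambda = \tfrac{l}{2}$ (valid for $l\geq 4n$, indeed for all $l\geq 1$ with $\lambda = l+1$) gives the factor $2/l$, and one wants the factor $4/l$, so there is comfortable slack. Moreover $v$ is in the image of $\bar\partial^*$, hence $L^2$-minimal; smoothness of $v$ follows from elliptic regularity since $\bar\partial v = \alpha$ is smooth and $(\Box = \bar\partial\bar\partial^* + \bar\partial^*\bar\partial)$ is elliptic.

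Then I would remove the weight: since $u$ is the Ricci potential, under the normalization convention of the paper it is bounded, and in fact one only needs $|u| \leq C_0$ on $M$; but here we must be slightly more careful because the whole point of Section 2 is that $\mathrm{osc}_M\, u$ is controlled (not a priori by $1$). The honest route is: the hypothesis $\|\nabla u\|_g \leq 1$ plus $\mathrm{diam}(M,g) \leq D$ (implicit in the setting, or one normalizes $u$ by $\int_M u\,\mathrm{dv}_g = 0$) gives $\mathrm{osc}_M\,u \leq D$, hence $e^{-u}$ is pinched between $e^{-\sup u}$ and $e^{-\inf u}$, and $|v|^2_h$ and $|v|^2_{he^{-u}}$ (resp. for $\alpha$) differ by at most a factor $e^{\mathrm{osc}\,u}$ — which spoils the clean constant $4$. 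So the correct implementation must choose $\lambda$ large enough (i.e. $l$ large, say $l \geq 4n$) that $\frac{1}{\lambda}e^{\mathrm{osc}\,u} \leq \frac{4}{l}$ — but $\mathrm{osc}\,u$ is not universally bounded. \emph{The resolution, and the real content, is that one should not twist at all}: instead use that the full curvature operator on $(0,1)$-forms for $(L^l, h)$ is $l\,\omega_g + \mathrm{Ric}(\omega_g) = (l+1)\omega_g + \sqrt{-1}\partial\bar\partial u$, and estimate the negative part of $\sqrt{-1}\partial\bar\partial u$ \emph{pointwise by its trace}, $\mathrm{tr}_{\omega_g}(\sqrt{-1}\partial\bar\partial u) = -\Delta u = R - n$ (up to sign/convention), which by Proposition \ref{estimate-u-R} and the smallness of $\int |R-n|$ along the flow is small — no, that again uses the flow, whereas this lemma is stated for a general $g$ satisfying only $\|\nabla u\|_g\le 1$.

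\textbf{Cleanest plan.} Twist by $e^{-u}$ as above, get the $L^2$-inequality with constant $1/(l+1)$ in the $he^{-u}$-norms, then note $\bar\partial v = \bar\partial\sigma$ is a bundle equation independent of the choice of fibre metric, and finally re-expand in the $h$-norm: $\int_M|v|^2_h \le e^{\sup u}\int_M|v|^2_{he^{-u}} \le \frac{e^{\sup u}}{l+1}\int_M|\bar\partial\sigma|^2_{he^{-u}} \le \frac{e^{\sup u - \inf u}}{l+1}\int_M|\bar\partial\sigma|^2_h$. To reach the stated $4l^{-1}$ one then invokes that in the situation where this lemma is \emph{used} (Sections 5--7), $g$ is a metric along the Ricci flow at a fixed small time with $u = u_t$ controlled, OR — the genuinely clean fix — one normalizes $u$ so that $\sup_M u = 0$ (always possible by adding a constant, which does not change $\nabla u$), giving $e^{\sup u} = 1$, and replaces the twisting weight by $e^{-u}$ with $u\le 0$, so that $|\alpha|^2_{he^{-u}} = e^{u}|\alpha|^2_h \le |\alpha|^2_h$; this yields $\int_M|v|^2_h \le \int_M|v|^2_{he^{-u}} \le \frac{1}{l+1}\int_M|\bar\partial\sigma|^2_h \le \frac{4}{l}\int_M|\bar\partial\sigma|^2_h$, since $\tfrac{1}{l+1} < \tfrac{4}{l}$. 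So the proof is: (i) normalize $\sup_M u = 0$; (ii) compute the curvature of $(L^l, he^{-u})$ on $(0,1)$-forms and show it is $\geq (l+1)\omega_g$; (iii) apply Hörmander's $L^2$-existence theorem in this weighted setting to get a smooth solution $v$ with $\int_M|v|^2_{he^{-u}} \le \frac{1}{l+1}\int_M|\bar\partial\sigma|^2_{he^{-u}}$; (iv) use $u\le 0$ to pass from $he^{-u}$-norms back to $h$-norms in the direction that preserves the inequality, and bound $\frac{1}{l+1}\le\frac{4}{l}$. The main obstacle is step (ii): correctly tracking the Bochner--Kodaira curvature term on $(0,1)$-forms, i.e. verifying that the relevant operator is $l\,\mathrm{Ric}(h) + \mathrm{Ric}(\omega_g) + \sqrt{-1}\partial\bar\partial(-u)$ and that the two $\sqrt{-1}\partial\bar\partial u$ contributions cancel, leaving a strictly positive $(l+1)\omega_g$; the condition $l \geq 4n$ and $\|\nabla u\|_g\le 1$ are then used only to have room in the final numerical constant and (in Remark-type variants) to absorb Ricci lower bounds.
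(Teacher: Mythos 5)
Your overall plan (twist the fibre metric to eat the Ricci potential, then apply H\"ormander and un-twist) is genuinely different from the paper's argument, and unfortunately it does not work for this lemma; there are two separate problems, and the paper's route is specifically designed to avoid them.

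First, the sign of the twist. You correctly note that replacing $h$ by $he^{-\phi}$ shifts the curvature by $+\sqrt{-1}\partial\bar\partial\phi$. The Bochner--Kodaira curvature term on $(0,1)$-forms is $l\omega_g+\mathrm{Ric}(\omega_g)=(l+1)\omega_g+\sqrt{-1}\partial\bar\partial u$. To cancel the $\sqrt{-1}\partial\bar\partial u$ you must therefore twist by $e^{+u}$, i.e.\ take $\phi=-u$; twisting by $e^{-u}$ as you propose produces $(l+1)\omega_g+2\sqrt{-1}\partial\bar\partial u$, which is worse, not better. Your claimed cancellation is a sign error.

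Second, and more fundamentally, the un-twisting step cannot give a universal constant. Whatever the correct sign, after H\"ormander you have $\int_M|v|^2_{h'}\le\lambda^{-1}\int_M|\alpha|^2_{h'}$ with $h'=he^{\pm u}$, and \emph{both} $|v|^2_{h'}$ and $|\alpha|^2_{h'}$ are obtained from the $h$-norms by the \emph{same} conformal factor $e^{\pm u}$, since $v$ and $\alpha=\bar\partial\sigma$ are both $L^l$-valued. Your ``clean fix'' relies on writing $|\alpha|^2_{he^{-u}}=e^{u}|\alpha|^2_h$, but the correct identity is $|\alpha|^2_{he^{-u}}=e^{-u}|\alpha|^2_h$; once you use the right sign, the chain of inequalities in step (iv) necessarily goes the wrong way on one of the two sides, and you pick up the factor $e^{\mathrm{osc}_M u}$ that you yourself flagged earlier as the obstacle. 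The lemma assumes only $\|\nabla u\|_g\le 1$ --- no diameter bound --- so $\mathrm{osc}_M u$ is not controlled, and no normalization of $u$ by an additive constant can help, since $\mathrm{osc}_M u$ is invariant under such normalizations. Hence the twisting approach cannot produce the stated constant $4l^{-1}$.

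The paper does not twist at all. Starting from the two Bochner--Kodaira identities $\Delta_{\bar\partial}\theta=\bar\nabla^*\bar\nabla\theta+\mathrm{Ric}(\theta,\cdot)+l\theta$ and $\Delta_{\bar\partial}\theta=\nabla^*\nabla\theta-(n-1)l\theta$ on $\Omega^{0,1}(L^l)$, it takes the convex combination $(1-\tfrac{1}{2n})$ of the first and $\tfrac{1}{2n}$ of the second so that the explicit zeroth-order term becomes $\tfrac{l}{2}\theta$ while both $\bar\nabla^*\bar\nabla$ and $\nabla^*\nabla$ appear with positive coefficients. It then writes $\mathrm{Ric}=g+\nabla\bar\nabla u$ and, rather than bounding $\nabla\bar\nabla u$ pointwise, integrates by parts to convert $\int\langle\nabla\bar\nabla u(\theta,\cdot),\theta\rangle$ into a term of the schematic form $\int\langle\bar\nabla u,\langle\nabla\theta,\theta\rangle+\langle\theta,\bar\nabla\theta\rangle\rangle$. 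With $\|\nabla u\|_g\le 1$ and Cauchy--Schwarz, this is absorbed by the $\int(|\bar\nabla\theta|^2+|\nabla\theta|^2)$ terms (with coefficient $\tfrac{1}{2n}$) at the cost of a controlled $\int|\theta|^2$, yielding $\lambda_1(\bar\partial,L^l)\ge\tfrac{l}{2}-n\ge\tfrac{l}{4}$ for $l\ge 4n$. This is precisely why the hypothesis is a \emph{gradient} bound on $u$ rather than an oscillation bound: the integration-by-parts argument only sees $\nabla u$, not $u$ itself, so the final constant is universal. If you want to salvage a weight-twisting version of the argument, you would need an a priori bound on $\mathrm{osc}_M u$, which is exactly what the paper's integration by parts avoids needing.
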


\begin{proof}  The existence part comes from  the H\"omander $L^2$-theory. We suffice to  verify   (\ref{L^2}), which is equal to prove
 that the first eigenvalue  $\lambda_1(\bar\partial, L^l)$  of $\Delta_{\bar{\partial}}$  is greater than $\frac{l}{4}$,
 where  $\Delta_{\bar{\partial}}$  denotes  the  Lapalce operator defined  on $L^2(T^*M\bigotimes L^l)$.

Note that  the following two  identities hold for any $\theta\in \Omega^{0,1}(L^l)$,
$$\Delta_{\bar{\partial}}\theta=\bar{\nabla}^*\bar{\nabla}\theta+\text{Ric}(\theta,.)+l\theta$$
and
$$\Delta_{\bar{\partial}}\theta=\nabla^*\nabla \theta-(n-1)l\theta.$$
It follows
\begin{align}
\Delta_{\bar{\partial}}\theta=(1-\frac{1}{2n})\bar{\nabla}^*\bar{\nabla}+(1-\frac{1}{2n})\text{Ric}(\theta,.)+
\frac{1}{2n}\nabla^*\nabla \theta+\frac{l}{2}\theta.
\end{align}
Then with the help of  condition  (\ref{Ricci-potential-gradient}),  a direct computation shows
\begin{align}\label{partial-operator-eingenvalue}
&\int_M\langle \Delta_{\bar{\partial}}\theta,\theta\rangle\notag\\
&=(1-\frac{1}{2n})\int_M|\bar{\nabla}\theta|^2+\frac{1}{2n}\int_M|\nabla\theta|^2
+\frac{l}{2}\int_M|\theta|^2&\notag\\
&+(1-\frac{1}{2n})\int_M(|\theta|^2+\langle \nabla \bar{\nabla}u(\theta,.),\theta\rangle)\notag\\
&\geq (1-\frac{1}{2n})\int_M|\bar{\nabla}\theta|^2+\frac{1}{2n}\int_M|\nabla\theta|^2+\frac{l}{2}\int_M|\theta|^2\notag\\
&+(1-\frac{1}{2n})\int_M|\theta|^2-(1-\frac{1}{2n})\int_M \langle \bar{\nabla}u, (\langle \nabla\theta,\theta\rangle
+\langle \theta,\bar{\nabla}\theta\rangle)\rangle&\notag\\
&\geq (1-\frac{1}{2n})\int_M|\bar{\nabla}\theta|^2+\frac{1}{2n}\int_M|\nabla\theta|^2
+\frac{l}{2}\int_M|\theta|^2\notag\\
&+(1-\frac{1}{2n})\int_M|\theta|^2-(1-\frac{1}{2n})\int_M[\frac{1}{2n}(|\bar{\nabla}\theta|^2+|\nabla\theta|^2)+n|\theta|^2]&\notag\\
&\geq (\frac{l}{2}-n)\int_M|\theta|^2.
\end{align}
Now we can choose $l\ge 4n$ to get  that   $\lambda_1(\bar\partial, L)\ge\frac{l}{4}$  as required.
\end{proof}

\begin{rem}\label{remark-lemma-3.2}
If  the upper bound  of $|\nabla u|$ is  replaced   by  a constant $C$,   the coefficient  at  the last inequality in (\ref{partial-operator-eingenvalue}) will be $\frac{l}{2}-nC^2$ .
Then by choosing  $l\geq 4nC^2$,  one  can also get (\ref{L^2}).   This  was  proved in \cite{TZha}.
\end{rem}

Recall  that a sequence of  almost K\"{a}hler-Einstein Fano manifolds $(M_i, J_i,$ $ g^i)$  satisfy:
\begin{align}\label{almost-ke-condition}
&i) ~{\rm Ric}(g^i)\geq -\Lambda^2 g^i~\text{and} ~{\rm diam}(M_i, g^i)\leq D;\notag\\
&ii)~\int_{M_i}|{\rm Ric} (g^i)-g^i| \text{{\rm{dv}}}_{g^i}\rightarrow 0;\notag\\
 &iii) ~\int_0^1\int_{M_i}|R(g^i_t)-n|\text{{\rm{dv}}}_{g^i_t} dt \rightarrow 0,~{\rm as}~i\to\infty.
\end{align}
 Here $g^i$ are normalized so that  $\omega_{g^i}\in 2\pi c_1(M_i)$ and   $g^i_t$ are  the solutions of (\ref{Ricci flow}) with the initial metrics $g^i$. We note that  $\text{vol}(M_i,g^i)=(2\pi)^n c_1(M_i)^n\ge V$ for some uniform constant $V$ by the normalization.

Applying  Lemma \ref{section-gradient-estimate} and  Lemma \ref{gradient-section} to almost K\"{a}hler-Einstein manifolds with the help of
  gradient estimate (\ref{gradient-estimate-2}) in Proposition \ref{estimate-u-R},   we have the following proposition.

\begin{prop}\label{corollary-1}
 Let  $\{(M_i,g^i)\}$  be a sequence of  almost K\"{a}hler Einstein metrics which satisfy
  (\ref{almost-ke-condition}).
 Then  for any $t\in (0, 1)$  there exists an integer
  $N=N(t)$   such that for any $i\ge N$ and $l\ge  4n$   it holds,
 \begin{align}\label{uniform-gradient}
\|s\|_{h_t^i}+l^{-\frac{1}{2}}\|\nabla s\|_{h_t^i}\leq C l^{\frac{n}{2}}(\int_M|s|^2  \text{{\rm{dv}}}_{g_t^i})^{\frac{1}{2}}
\end{align}
and
\begin{align}\label{L^2-ke}
\int_{M_i}|v|_{h_t^i}^2\leq 4l^{-1}\int_{M_i}|\bar{\partial}\sigma|^2.
\end{align}
 Here   $s\in H^0(M_i, K_{M_i}^{-l})$,   the norms of $|\cdot|_{h^i_t}$  are induced by $g_t^i$, and  $C$ is a uniform constant independent of $t$.
 \end{prop}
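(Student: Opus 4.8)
The plan is to show that, for $t$ fixed and $i$ large, the metric $g_t^i$ satisfies --- uniformly in $i$ --- the hypotheses under which Lemma~\ref{section-gradient-estimate} and Lemma~\ref{gradient-section} were established, and then simply to re-run those two proofs with $g$ replaced by $g_t^i$. First I would apply Proposition~\ref{estimate-u-R} to the flow with initial metric $g^i$, which obeys the curvature and diameter bounds of \eqref{almost-ke-condition} i) with uniform constants $\Lambda,D$. Its estimate \eqref{gradient-estimate-2} bounds $|\nabla u_t^i|^2$, and \eqref{scalar-curvature} bounds $|R(g_t^i)-n|$, each by a fixed negative power of $t$ times $\int_{t/2}^t\int_{M_i}|R(g^i_\tau)-n|\,dv_{g^i_\tau}\,d\tau$. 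By condition \eqref{almost-ke-condition} iii) this space-time integral tends to $0$ as $i\to\infty$, so there is an integer $N=N(t)$ such that for all $i\ge N$ one has, on all of $M_i$,
\[
\|\nabla u_t^i\|_{g_t^i}\le 1 \qquad\text{and}\qquad R(g_t^i)\le n+1 .
\]

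The second bound is the source of the $t$-independence. Inserting $R(g_t^i)\le n+1$ into Lemma~\ref{Sobolev} converts the weighted Sobolev inequality for $g_t^i$ into a genuine one,
\[
\Big(\int_{M_i} f^{\frac{2n}{n-1}}\,dv_{g_t^i}\Big)^{\frac{n-1}{n}}\le A\int_{M_i}\big(|\nabla f|^2+(n+1+C_0)f^2\big)\,dv_{g_t^i},
\]
with $A$ and $C_0$ depending only on the Sobolev constant of $g^i$, on $-\inf_{M_i}R(g^i)$, and on $V$, hence only on the uniform data of the sequence and in particular not on $t$. (That the Sobolev constants of the $g^i$ are themselves uniformly bounded follows from \eqref{almost-ke-condition} i) and the volume lower bound $V$ via Bishop--Gromov volume comparison, which yields a uniform non-collapsing at all scales $\le D$.) Now the proof of Lemma~\ref{section-gradient-estimate} goes through for $(M_i,g_t^i)$ and the bundle $(K_{M_i}^{-l},h_t^i)$: the Bochner--Kodaira identity gives $-\Delta_{g_t^i}|s|_{h_t^i}^2\le nl|s|_{h_t^i}^2$, so Moser iteration against the uniform Sobolev inequality yields $\|s\|_{h_t^i}\le Cl^{n/2}\|s\|_{L^2(g_t^i)}$; iterating the Bochner formula for $|\nabla s|_{h_t^i}^2$ and absorbing its Ricci term by integration by parts with the help of $\|\nabla u_t^i\|_{g_t^i}\le 1$ (as in \cite{WZ2}, \cite{TZZZ}) gives the gradient half of \eqref{uniform-gradient}, again with $C$ depending only on the uniform data.

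For \eqref{L^2-ke} only $\|\nabla u_t^i\|_{g_t^i}\le 1$ is needed: the computation \eqref{partial-operator-eingenvalue} uses nothing beyond the two Bochner--Kodaira identities on $\Omega^{0,1}(K_{M_i}^{-l})$, the identity ${\rm Ric}(\omega_{g_t^i})=\omega_{g_t^i}+\sqrt{-1}\partial\bar\partial u_t^i$, and that gradient bound, so it gives $\lambda_1(\bar\partial,K_{M_i}^{-l})\ge l/2-n\ge l/4$ once $l\ge 4n$, and hence the H\"ormander solution $v$ with $\int_{M_i}|v|_{h_t^i}^2\le 4l^{-1}\int_{M_i}|\bar\partial\sigma|^2$. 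The genuinely substantive point, and the step I would watch most carefully, is the uniformity in $t$ of the constant in \eqref{uniform-gradient}: it rests entirely on upgrading \eqref{scalar-curvature} to the pointwise bound $R(g_t^i)\le n+1$ for $i\ge N(t)$, since otherwise the Sobolev constant --- hence the Moser constant --- of $g_t^i$ degenerates like a negative power of $t$ as $t\to 0$ and the asserted $t$-independence would fail.
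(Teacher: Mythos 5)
Your proof is correct and matches the paper's own approach: verify $\|\nabla u_t^i\|_{g_t^i}\le 1$ for $i\ge N(t)$ via Proposition~\ref{estimate-u-R} and hypothesis iii), then invoke Lemma~\ref{section-gradient-estimate} and Lemma~\ref{gradient-section} for $(M_i,g_t^i)$. You go beyond the paper's terse wording in one useful respect: you make explicit that the pointwise bound $R(g_t^i)\le n+1$ from \eqref{scalar-curvature} must be fed into the weighted Sobolev inequality of Lemma~\ref{Sobolev} so that the Sobolev (hence Moser) constant of $g_t^i$ is uniform, which is exactly what the claimed $t$-independence of $C$ requires --- the paper only records the uniform Sobolev constant of the initial metrics $g^i$ and leaves this step implicit.
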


 \begin{proof}  A well-known result  shows  that  the Sobolev constants  $C_s$ of
 $(M_i,$ $ g^i)$  depend only on the constants  $\Lambda,D$ and $V$. Then by  (\ref{gradient-estimate-2}) in Proposition \ref{estimate-u-R},  for any $t\in (0,1)$,  there exists $N=N(t)$ such that
$$\|\nabla u^i\|_{h^i_t}\leq 1,~ \forall~i\ge N,$$
where $u^i$ are Ricci potentials of $g^i_t$ .  Thus we can apply   Lemma \ref{section-gradient-estimate} to get (\ref{uniform-gradient}).
Similarly, we can get (\ref{L^2-ke}) by Lemma \ref{gradient-section}.
\end{proof}

\section{Construction of locally approximate holomorphic sections }

Let $\{(M_i,g^i)\}$ be a sequence of  almost K\"{a}hler-Einstein manifolds as in Section 3 and $(M_\infty,g_\infty)$  its   Gromov-Hausdorff limit.
It was proved by Tian and  B. Wang  that the regular part $\mathcal{R}$ of $M_\infty$ is an open K\"ahler manifold
and the codimension of singularities of $M_\infty$ is at least 4 \cite{TW}.   Moreover, according to Proposition 5.1 in that paper, we have

\begin{lem}\label{regular-part} Let $x\in M_\infty$.
Then there exist constants $\epsilon=\epsilon(n)$ and  $r_0=r_0(n, C)$ such that if ${\rm vol}(B_x(r))\geq(1-\epsilon)\omega_{2n}r^{2n}$ for some $r\leq r_0$, then
$B_x(\frac{r}{2})\subseteq\mathcal{R}$, and
\begin{align}
{\rm Ric}(g_\infty)=g_\infty, \, \|\nabla^l {\rm Rm}\|_{C^0(B_x(\frac{r}{2}))}\leq \frac{C}{r^{l+2}},\notag
\end{align}
 where  the constant $C$ depends only on $l$,  and the constants $\Lambda$ and $D$ in (\ref{almost-ke-condition}).
\end{lem}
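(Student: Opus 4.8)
The assertion is an $\epsilon$-regularity (almost-maximal volume) theorem for the Gromov--Hausdorff limit, in the spirit of Anderson's gap theorem for Einstein manifolds and of Proposition~5.1 in \cite{TW}. The plan is to push the volume hypothesis back to the approximating sequence $\{(M_i,g^i)\}$, establish uniform curvature and derivative bounds on a ball of definite size there, and then take a smooth limit.

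First I would fix base points $x_i\in M_i$ with $x_i\to x$. Since ${\rm Ric}(g^i)\ge-\Lambda^2 g^i$, Colding's volume continuity gives ${\rm vol}(B_{x_i}(r))\to{\rm vol}(B_x(r))$, so for all large $i$ one has ${\rm vol}(B_{x_i}(r))\ge(1-2\epsilon)\omega_{2n}r^{2n}$. Rescaling $g^i$ by $r^{-2}$ turns the Ricci lower bound into $-\Lambda^2 r^2$, so taking $r_0$ small forces the rescaled metrics to be almost nonnegatively Ricci curved with almost Euclidean volume ratio on the unit ball; by Cheeger--Colding almost-rigidity the corresponding balls are then Gromov--Hausdorff close to a Euclidean ball.

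The core step is an $\epsilon$-regularity estimate on $M_i$ itself: if $\epsilon=\epsilon(n)$ is small enough and $r\le r_0$, then $\sup_{B_{x_i}(3r/4)}|{\rm Rm}(g^i)|\le Cr^{-2}$, whence $\sup_{B_{x_i}(r/2)}|\nabla^l{\rm Rm}(g^i)|\le Cr^{-(l+2)}$ by interior derivative estimates. Since we have only a one-sided Ricci bound together with the integral smallness conditions in (\ref{almost-ke-condition}), the classical elliptic Anderson argument is unavailable; instead I would run the normalized K\"ahler--Ricci flow (\ref{Ricci flow}) from $g^i$ for a short time, control it via the scalar-curvature and potential estimates of Proposition~\ref{estimate-u-R}, and combine Perelman's pseudolocality with Shi-type derivative estimates and the monotonicity of the volume ratio along the flow to obtain a local curvature bound that propagates back to $g^i$. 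This smoothing mechanism, as in \cite{TW} and \cite{WZ2}, is the step I expect to be the main obstacle.

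Finally, the uniform bounds on $B_{x_i}(r/2)$ together with a lower injectivity-radius bound (from the almost-Euclidean volume ratio, via Cheeger--Gromov--Anderson) yield $C^\infty$ convergence, modulo diffeomorphism, of $(B_{x_i}(r/2),g^i,x_i)$ to a smooth Riemannian ball $(B_x(r/2),g_\infty,x)$. Hence $B_x(r/2)\subseteq\mathcal{R}$, the stated bounds $\|\nabla^l{\rm Rm}\|_{C^0(B_x(r/2))}\le Cr^{-(l+2)}$ descend to the limit, and, since ${\rm Ric}(g^i)-g^i\to0$ in $L^1$ while $g^i\to g_\infty$ smoothly on this ball, the limit satisfies ${\rm Ric}(g_\infty)=g_\infty$. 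The asserted dependence of $\epsilon,r_0$ on $n$ and of $C$ on $l,\Lambda,D$ is exactly what the flow-smoothing argument produces.
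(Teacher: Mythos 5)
The paper does not actually prove this lemma: it is quoted verbatim as Proposition~5.1 of Tian--Wang \cite{TW}, so there is no in-paper argument to compare against. Your outline is, in substance, a sketch of the proof in that reference, and you have correctly identified the essential features: volume continuity under the lower Ricci bound to transfer the almost-maximal volume hypothesis to $x_i\in M_i$, rescaling plus Cheeger--Colding almost-rigidity, the unavailability of Anderson's elliptic $\epsilon$-regularity under only a one-sided Ricci bound with the integral conditions (\ref{almost-ke-condition}), and the substitute mechanism of smoothing by the normalized K\"ahler--Ricci flow combined with a pseudolocality theorem and Shi-type derivative estimates, followed by Cheeger--Gromov--Anderson convergence on the ball. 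This is exactly the route the cited result takes, so as a description of the strategy your proposal is on target.

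As a standalone proof, however, it has a genuine gap precisely where you flag ``the main obstacle'': the uniform bound $\sup_{B_{x_i}(3r/4)}|{\rm Rm}(g^i)|\le Cr^{-2}$ is asserted but not established. Two points there require real work and cannot be dispatched by name-dropping Perelman's pseudolocality. First, pseudolocality gives curvature bounds for $g^i_t$ only for $t>0$; to conclude anything about $g^i=g^i_0$ (and hence about $g_\infty$, which is the GH limit of the unflowed metrics) one must control the distortion between $g^i_t$ and $g^i$ as $t\to 0$, which is where the integral conditions ii) and iii) of (\ref{almost-ke-condition}) and the potential estimates of Proposition~\ref{estimate-u-R} enter; this backward propagation is the technical heart of \cite{TW} and is absent from your sketch. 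Second, your final step derives ${\rm Ric}(g_\infty)=g_\infty$ from ``${\rm Ric}(g^i)-g^i\to 0$ in $L^1$ while $g^i\to g_\infty$ smoothly,'' but $L^1$-smallness of the Ricci deviation together with $C^\infty$ convergence of the metrics only gives ${\rm Ric}(g_\infty)=g_\infty$ after upgrading the $L^1$ control to pointwise control on the good ball (e.g.\ via the $C^\infty$ bounds and Arzel\`a--Ascoli applied to the curvature tensors); that upgrade should be stated, since it is exactly what fails off the regular set. Neither issue suggests the approach is wrong --- it is the right approach --- but both must be filled in for the argument to stand on its own rather than reduce to the citation the paper itself uses.
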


Recall that  a tangent cone $C_x$ at $x\in M_\infty$ is a  Gromov-Hausdorff limit defined by
\begin{align}\label{tagent-cone-sequence}(C_x, g_x, x)=\lim_{j\rightarrow \infty}(M_\infty,\frac{g_\infty}{r_j^2},x),\end{align}
where $\{r_j\}$ is some sequence which goes to $0$.  Without  the loss of  generality,  we  may  assume that $l_j=\frac{1}{r_j^2}$ are integers.   Since $(C_x, g_x, x)$ is a metric cone, $g_x={\rm hess}\frac{\rho_x^2}{2}$,   where $\rho_x={\rm dist}(x,\cdot)$ is a distance function staring  from $x$ in $C_x$.

Denote   the regular part of $(C_x, g_x, x)$ by  $\mathcal{CR}$,   which consists of points in $C_x$ with flat cones.
By Lemma  \ref{regular-part}, we prove

\begin{lem}\label{cone-regular-part}  $\mathcal{CR}$ is an open K\"ahler-Ricci flat manifold.  Moreover, for any compact set $K\subset \mathcal{CR}$,  there exist  a  sequence of  $(K_j \subset  \mathcal{R},  \frac{1}{r_j^2} g_\infty)$  which converges to  $K$ in  $C^\infty$-topology.
\end{lem}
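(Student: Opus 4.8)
The plan is to prove that every $p\in\mathcal{CR}$ has a neighborhood in $C_x$ on which $g_x$ is a smooth K\"ahler--Ricci flat metric, realized as a pointed $C^\infty$-limit of rescalings of $(\mathcal{R}, g_\infty)$. Openness of $\mathcal{CR}$ and its smooth K\"ahler--Ricci flat structure then follow at once, and the statement over a compact $K$ is a finite patching argument. Throughout, $\epsilon=\epsilon(n)$ and $r_0=r_0(n,C)$ are as in Lemma~\ref{regular-part}.

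First I would transfer a volume bound down to $M_\infty$. Fix $p\in\mathcal{CR}$. Since $(C_x,g_x)$ is a metric cone arising as a tangent cone of a Cheeger--Colding limit, it satisfies Bishop--Gromov volume monotonicity with nonnegative Ricci, and the density at the regular point $p$ equals $1$; hence $\mathrm{vol}(B_p(s))/(\omega_{2n}s^{2n})\to 1$ as $s\to 0$, so there is $\delta\le r_0$ with $\mathrm{vol}(B_p(\delta))\ge(1-\tfrac{\epsilon}{2})\omega_{2n}\delta^{2n}$. By the pointed Gromov--Hausdorff convergence $(M_\infty, g_\infty/r_j^2, x)\to(C_x,g_x,x)$ in (\ref{tagent-cone-sequence}) and continuity of volume under it, there are $p_j\in M_\infty$ with $p_j\to p$ and
\[
\frac{\mathrm{vol}_{g_\infty/r_j^2}\!\big(B_{p_j}(\delta)\big)}{\omega_{2n}\delta^{2n}}\ \longrightarrow\ \frac{\mathrm{vol}_{g_x}\!\big(B_p(\delta)\big)}{\omega_{2n}\delta^{2n}}\ \ge\ 1-\tfrac{\epsilon}{2}.
\]
Unwinding the scaling, $\mathrm{vol}_{g_\infty}\big(B_{p_j}(\delta r_j)\big)\ge(1-\epsilon)\omega_{2n}(\delta r_j)^{2n}$ and $\delta r_j\le r_0$ for all large $j$, so Lemma~\ref{regular-part} applies at $p_j$ in $(M_\infty, g_\infty)$: $B_{p_j}(\delta r_j/2)\subseteq\mathcal{R}$, $\mathrm{Ric}(g_\infty)=g_\infty$ there, and $\|\nabla^l\mathrm{Rm}(g_\infty)\|_{C^0(B_{p_j}(\delta r_j/2))}\le C(\delta r_j)^{-l-2}$ for every $l$.

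Next I would pass to the rescaled metrics $\tilde g_j=g_\infty/r_j^2$. Scaling the last estimate gives $\|\nabla^l\mathrm{Rm}(\tilde g_j)\|_{C^0(B_{p_j}(\delta/2))}\le C\delta^{-l-2}$ with $C$ independent of $j$, while $\mathrm{Ric}(\tilde g_j)=r_j^2\,\tilde g_j\to 0$ and $\tilde g_j$ is K\"ahler for the complex structure induced from $M_\infty$, as in \cite{TW}. The almost-Euclidean volume ratio together with the curvature bound gives, via Cheeger's injectivity radius estimate, a uniform lower bound for the injectivity radius on $B_{p_j}(\delta/4)$, so the pointed K\"ahler manifolds $(B_{p_j}(\delta/4),\tilde g_j,p_j)$ have uniformly bounded geometry. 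By the $C^\infty$ Cheeger--Gromov compactness theorem a subsequence converges in the pointed $C^\infty$ topology to a smooth pointed K\"ahler manifold $(N,h,p_\infty)$ with $\mathrm{Ric}(h)=0$; since $C^\infty$ convergence implies Gromov--Hausdorff convergence and GH-limits are unique, $(N,h)$ is isometric to $B_p(\delta/4)\subseteq C_x$ (after a harmless shrinking of the radius). Thus $B_p(\delta/4)\subseteq\mathcal{CR}$ carries a smooth K\"ahler--Ricci flat metric agreeing with $g_x$, and since the limit is forced to be this fixed ball the full sequence $(B_{p_j}(\delta/4),\tilde g_j)$ converges to it in $C^\infty$. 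As $p$ was arbitrary, $\mathcal{CR}$ is open in $C_x$ and is a K\"ahler--Ricci flat manifold.

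Finally, given a compact $K\subset\mathcal{CR}$, I would cover it by finitely many balls $B_{p^{(m)}}(\delta_m/8)$, $m=1,\dots,N$, of the type produced above, let $p^{(m)}_j\to p^{(m)}$ be the associated approximating points in $M_\infty$, and take $K_j$ to be the union of the $\tilde g_j$-balls $B_{p^{(m)}_j}(\delta_m/8)$ inside a fixed small neighborhood of $K$; for $j$ large these lie in $\mathcal{R}$ by Lemma~\ref{regular-part}. Gluing the finitely many local $C^\infty$ convergences---using that on overlaps the identifications with $C_x$ differ by diffeomorphisms $C^\infty$-close to the identity, again by uniqueness of the GH limit---yields the desired $C^\infty$ convergence of $(K_j,\tfrac{1}{r_j^2}g_\infty)$ to $K$. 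I expect this last step to be the main obstacle: upgrading the local, $p$-by-$p$ smooth convergence to one genuine $C^\infty$ convergence over all of $K$ requires choosing the basepoints $p^{(m)}_j$ compatibly so that the local limits glue to the single metric $g_x|_K$, and some care with the overlapping charts. The key input making it work is the uniform-in-$j$ control of all derivatives of curvature supplied by Lemma~\ref{regular-part} after rescaling, combined with the non-collapsing coming from the almost-Euclidean volume ratio.
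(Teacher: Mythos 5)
Your proposal follows essentially the same path as the paper's own proof: both transfer an almost-Euclidean volume ratio from a regular cone point to nearby points of $M_\infty$ via the tangent-cone rescaling, invoke Lemma~\ref{regular-part} to get uniform bounds on all curvature derivatives of $\tilde g_j = g_\infty/r_j^2$ on the corresponding balls in $\mathcal{R}$, and apply Cheeger--Gromov compactness to identify the smooth limit with a ball in $(C_x,g_x)$, finishing with a finite covering of $K$. The extra details you supply (volume density one at regular cone points, volume continuity under pointed GH convergence, injectivity radius lower bound, and the uniqueness argument identifying the smooth limit with the GH limit) are left implicit in the paper's terser write-up and are correctly filled in.
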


\begin{proof} Let $\epsilon$ be a small number chosen as in Lemma  \ref{regular-part}.  Then for any  $y\in\mathcal{CR}$,
there exists some small $r$ such that $\hat B_y(r)\subset C_x$ and
\begin{align}
\text{vol }(\hat B_y(r))\geq(1-\frac{\epsilon}{2})\omega_{2n}r^{2n}.\notag
\end{align}
Thus  there exists  a sequence of  $y_\alpha\in C_x$  such that
\begin{align}
\text{vol }(B_{y_\alpha}(rr_\alpha))\geq(1-\epsilon)\omega_{2n}(rr_\alpha)^{2n},\notag
\end{align}
where  the sequence $\{r_\alpha\}$ is chosen as in  (\ref{tagent-cone-sequence}).
By Lemma \ref{regular-part},   it follows
\begin{align}
\|{\rm Rm}(\tilde{g}_\infty)\|_{C^l(\tilde B_{y_\alpha}(\frac{r}{2}))}\leq \frac{C_l}{r^{l+2}},\notag
\end{align}
 where $\tilde{g}_\infty=\frac{g_\infty}{r_\alpha^2}$ and $\tilde B_{y_\alpha}(\frac{r}{2})\subset M_\infty$  is a $\frac{r}{2}$-geodesic ball  with respect to the metric  $\tilde{g}_\infty$.
Hence, by the Cheeger-Gromov compactness theorem [GW],  $ (\tilde B_{y_\alpha}(\frac{r}{2}), \tilde{g}_\infty)$   converge to $(\hat B_{y}(\frac{r}{2}), g_x)$  in  $C^\infty$-topology.
In particular,  $ B_{y_\alpha}(\frac{r_\alpha r}{2}$
\newline  $)\subset\mathcal{R} $ and $\hat B_{y}(\frac{r}{2})\subset \mathcal{CR}$.  This implies that  $\mathcal{CR}$ is an open manifolds.
Moreover,  $\mathcal{CR}$  is a K\"ahler-Ricci flat manifold  since  each $ ( B_{y_\alpha}(\frac{r_\alpha r}{2}), {g}_\infty)$ is an open K\"ahler-Einstein manifold.
If $K $ is a compact set  of $\mathcal{CR}$, then
by taking finite  small geodesic covering balls,  one can  find  a sequence  $\{(K_j\subset  \mathcal{R},  \frac{1}{r_j^2} g_\infty)\}$  which converges to $(K, g_x)$ in  $C^\infty$-topology.
\end{proof}

Define an open  set  $V(x;\delta)$  of $\mathcal{CR}$  by
 \begin{align}
 V(x;\delta)=\{y\in C_x|~{\rm dist}(y,S_x)\geq \delta, d(y,x)\leq\frac{1}{\delta}\},
 \end{align}
where $S_x=C_x\setminus  \mathcal{CR}$.  The following lemma shows  that there  exists a   ``nice'' cut-off function on $C_x$ which  supported on   $V(x;\delta)$.

\begin{lem}\label{cut-off}
For any $\eta, \delta>0$, there exist some $\delta_1<\delta$ and a cut-off function $\beta$  on $C_x$ which supported in $V(x;\delta_1)$ with property:
$\beta=1, \text{ in } V(x;\delta);$
\begin{align}\int_{C_x}|\nabla \beta|^2 e^{-\frac{\rho_x^2}{2}} d{\rm v}_{g_x}\leq \eta.\notag
\end{align}
\end{lem}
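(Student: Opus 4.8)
The plan is to take $\beta$ of product form
\[
\beta(y)\ =\ f\bigl(d_S(y)\bigr)\,\chi\bigl(\rho_x(y)/R\bigr),\qquad d_S(y)={\rm dist}(y,S_x),
\]
and to choose the two one–variable cut–offs $f,\chi$ together with the radius $R$ and the parameter $\delta_1$ in that order. The factor $\chi$ localises away from infinity, and since the weight $e^{-\rho_x^2/2}$ decays like a Gaussian while ${\rm vol}(B_x(R))\le CR^{2n}$ on the cone, this factor will be essentially free. The factor $f$ localises away from $S_x$, and here the real content is that $S_x$ has real codimension at least $4$ in $C_x$ — which passes to the tangent cone from Lemmas \ref{regular-part} and \ref{cone-regular-part} — so that a \emph{logarithmic} cut–off has arbitrarily small weighted Dirichlet energy.

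Concretely I would first fix a standard $\chi\in C^\infty(\RR)$ with $\chi\equiv1$ on $(-\infty,1]$, $\chi\equiv0$ on $[2,\infty)$, $0\le\chi\le1$, $|\chi'|\le2$, and pick $R\ge 2/\delta$ large enough that the contribution $8R^{-2}e^{-R^2/2}\,{\rm vol}(B_x(2R))\le C R^{2n-2}e^{-R^2/2}$ of the outer annulus is $<\eta/2$; here ${\rm vol}(B_x(2R))\le C'R^{2n}$ because $C_x$ is a metric cone (or by volume comparison). With $R$ now fixed, $S_x\cap\overline{B_x(2R)}$ is compact and the quantitative codimension–$4$ estimate gives a constant $C_0=C_0(R)$ with ${\rm vol}\bigl(\{d_S<r\}\cap B_x(2R)\bigr)\le C_0 r^4$ for all $0<r\le\delta$ (any exponent $>2$ would suffice). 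Decomposing $\{\delta_1<d_S<\delta\}$ into the dyadic shells $\{2^{-j-1}\delta<d_S\le2^{-j}\delta\}$ then yields
\[
\int_{\{\delta_1<d_S<\delta\}\cap B_x(2R)}\frac{d{\rm v}_{g_x}}{d_S^{2}}\ \le\ \sum_{j\ge0}(2^{-j-1}\delta)^{-2}C_0(2^{-j}\delta)^{4}\ =\ \tfrac{16}{3}\,C_0\delta^{2}\ =:\ C_1,
\]
a finite constant \emph{independent of $\delta_1$}. Finally I take $f\colon[0,\infty)\to[0,1]$ Lipschitz, equal to $0$ on $[0,\delta_1]$, to $1$ on $[\delta,\infty)$ and to $\log(t/\delta_1)/\log(\delta/\delta_1)$ on $[\delta_1,\delta]$, and choose $\delta_1<\min\{\delta,1/(2R)\}$ small enough that $C_1/(\log(\delta/\delta_1))^2<\eta/8$.

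It then remains only to verify the claims, which is routine. On $V(x;\delta)$ one has $d_S\ge\delta$ and $\rho_x\le1/\delta\le R$, so $\beta\equiv1$; and $\beta$ vanishes once $d_S\le\delta_1$ or $\rho_x\ge2R$, so ${\rm supp}\,\beta\subset V(x;\delta_1)$ since $2R\le1/\delta_1$. Because $d_S$ and $\rho_x$ are $1$–Lipschitz (and differentiable a.e.\ on $\mathcal{CR}$, which carries full measure) and $0\le f,\chi\le1$, one gets $|\nabla\beta|^2\le 2|f'(d_S)|^2\mathbf{1}_{B_x(2R)}+8R^{-2}\mathbf{1}_{B_x(2R)\setminus B_x(R)}$ with $|f'(t)|\le(t\log(\delta/\delta_1))^{-1}$ on $(\delta_1,\delta)$ and $0$ otherwise; combining this with $e^{-\rho_x^2/2}\le1$ on the first term and with the two estimates above gives $\int_{C_x}|\nabla\beta|^2e^{-\rho_x^2/2}\,d{\rm v}_{g_x}\le 2C_1/(\log(\delta/\delta_1))^2+C R^{2n-2}e^{-R^2/2}<\eta$. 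Since $\beta$ is supported in $\{d_S\ge\delta_1\}\subset\mathcal{CR}$, if a $C^\infty$ cut–off is wanted one mollifies $\beta$ by an arbitrarily small amount inside the manifold $\mathcal{CR}$, changing the energy by less than any prescribed amount.

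The one genuinely non–formal ingredient, and the step I expect to be the main obstacle, is the quantitative (Minkowski–content) form of the codimension–$4$ bound for $S_x$: knowing only $\dim_{\mathcal H}S_x\le 2n-4$ would not by itself control $\int d_S^{-2}$, so one must invoke the tubular–neighbourhood volume estimate ${\rm vol}(\{d_S<r\}\cap B_x(2R))\le C_0 r^4$, which here follows from the $\epsilon$–regularity of Lemma \ref{regular-part} transported to the tangent cone by Lemma \ref{cone-regular-part}. Everything else — the Gaussian beating polynomial volume growth, the logarithmic cut–off, and the product rule — is standard.
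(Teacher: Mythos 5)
Your overall architecture (an outer radial cut-off made harmless by the Gaussian weight, plus an inner cut-off along $S_x$ with small Dirichlet energy) matches the paper's, but the step you yourself flag as the main obstacle is a genuine gap, and it is precisely the step the paper's argument is built to avoid. Your inner factor is a function of $d_S={\rm dist}(\cdot,S_x)$, and your dyadic computation of $\int_{\{\delta_1<d_S<\delta\}}d_S^{-2}$ requires the Minkowski-type bound ${\rm vol}(\{d_S<r\}\cap B_x(2R))\le C_0r^4$ at \emph{every} scale $r$. This does not follow from Lemma \ref{regular-part} and Lemma \ref{cone-regular-part}: what those lemmas (via the results of Tian--Wang) deliver is that $S_x$ has Hausdorff codimension at least $4$, i.e.\ coverings of $S_x\cap B_x(R)$ by balls $B_{x_i}(r_i)$ with $\sum_ir_i^{2n-2}$ arbitrarily small. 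A Hausdorff measure bound controls the volume of \emph{one} well-chosen neighbourhood built from such a covering (the radii $r_i$ may vary wildly), not the uniform scaling of all tubular neighbourhoods; upgrading the $\epsilon$-regularity to a uniform tube-volume estimate is essentially the quantitative stratification of Cheeger--Naber, which is a substantially deeper result and is nowhere invoked in this paper. Without that estimate, $\int d_S^{-2}$ could a priori diverge badly enough that even the logarithmic cut-off fails, so as written your proof is incomplete at its load-bearing point.

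The paper's proof (Lemma \ref{cut-off-0}) uses only $\mathcal{H}^{2n-2}(S_x)=0$ together with the cone volume bound $\mu(B_y(r))\le C_0r^{2n}$: cover $S_x\cap B_x(R)$ by finitely many balls $B_{x_i}(r_i)$ with $\sum_ir_i^{2n-2}\le\kappa$, and take the inner factor to be $\chi=\min_i\bigl(1-\zeta(d(\cdot,x_i)/r_i)\bigr)$, a minimum of per-ball linear cut-offs rather than a function of the global distance $d_S$. Each ball then contributes at most $4r_i^{-2}\,\mu(B_{x_i}(r_i))\le 4C_0r_i^{2n-2}$ to the weighted energy, so the total is $\le 4C_0\kappa$, arbitrarily small; $\delta_1$ is then chosen so small that $\{d_S\le\delta_1\}\cap B_x(R)\subset\cup_iB_{x_i}(r_i/2)$, which is why $\delta_1$ depends on the covering. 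Your argument becomes correct if you replace $f(d_S)$ by this min-of-local-cut-offs construction (the outer radial factor and the support bookkeeping in your write-up are fine); otherwise you would need to supply an independent proof of the tube-volume estimate, which is harder than everything else in the lemma combined.
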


Lemma \ref{cut-off} is  in fact a corollary of following fundamental lemma.

\begin{lem}\label{cut-off-0}
Let $(X^m,d,\mu)$ be a  measured metric space  such that
\begin{align}\label{volume}
 \mu(B_y(r)\le C_0r^m,  ~\forall ~r\leq 1, ~y\in X.
\end{align}
 Let  $Z$ be a  closed subset of $X$  with $\mathcal{H}^{m-2}(Z)=0$. Suppose that  there exists a nonnegative function $f\leq 1$ on $X$ such that
$$\int_Xfd\mu\leq 1.$$
 Then
for any $x\in X$,  $\eta>0$ and $\delta>0$, there exist  a  positive $\delta_1\leq\delta$  and  a cut-off function $\beta\ge 0$, which  supported in $B_x(\frac{1}{\delta_1})\setminus Z_{\delta_1}$ with property: $
{\beta}=1, \text{ in } B_x(\frac{1}{\delta})\setminus Z_{\delta};$
\begin{align}\label{small-gradient}\int_Xf|{\rm Lif} (\beta)|^2d\mu \leq \eta.
\end{align}
Here $Z_{\delta_1}=\{x'\in X|~ {\rm dist}(x',Z)\leq \delta_1\}$ and ${\rm Lip} (\beta)(z)=\sup_{w\rightarrow z}|\frac{f(w)-f(z)}{d(w,z)}|$.

\end{lem}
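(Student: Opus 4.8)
The plan is to construct $\beta$ as a product $\beta=\chi\,\beta_Z$ of a spatial cut-off $\chi$, which kills the region far from $x$, and a cut-off $\beta_Z$, which kills a neighborhood of $Z$, each carrying at most a small fraction of the allowed energy $\eta$. We may assume $m\ge 3$: when $m\le 2$ one has $\mathcal H^{m-2}(Z)=\mathcal H^{0}(Z)=0$, so $Z=\emptyset$ and only the (trivial) spatial cut-off is needed. For the spatial cut-off, fix $a\in(0,\delta]$ with $a\le 1$ and set $\chi(y)=\phi(d(y,x))$, where $\phi$ equals $1$ on $[0,1/\delta]$, equals $0$ on $[1/a,\infty)$, and is affine in between. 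Since $d(\cdot,x)$ is $1$-Lipschitz, ${\rm Lip}(\chi)\le(1/a-1/\delta)^{-1}$ and is supported in $B_x(1/a)\setminus B_x(1/\delta)$; hence, using $f\le 1$ and $\int_X f\,d\mu\le 1$,
\[
\int_X f\,|{\rm Lip}(\chi)|^2\,d\mu\ \le\ \frac{1}{(1/a-1/\delta)^2}\int_X f\,d\mu\ \le\ \frac{a^2}{(1-a/\delta)^2},
\]
which tends to $0$ as $a\to 0$; choose $a$ small enough that it is $<\eta/8$.

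For the cut-off near $Z$, put $R=1/a+2$. Using $\mathcal H^{m-2}(Z)=0$ and the compactness of $Z\cap\overline{B_x(R)}$, for any $\epsilon>0$ we may cover $Z\cap\overline{B_x(R)}$ by finitely many balls $B_{r_j}(z_j)$ with $z_j\in Z$, $2r_j\le\min(1,\delta)$ and $\sum_j r_j^{m-2}<\epsilon$. Let $\psi_j$ be the radial cut-off equal to $0$ on $B_{r_j}(z_j)$ and to $1$ off $B_{2r_j}(z_j)$, affine in $d(\cdot,z_j)$ in between, and set $\beta_Z=\min_j\psi_j$. Then $\beta_Z=0$ on the open set $U:=\bigcup_j B_{r_j}(z_j)\supseteq Z\cap\overline{B_x(R)}$, while $\beta_Z\equiv 1$ off $\bigcup_j B_{2r_j}(z_j)$; as the latter set is contained in $Z_{2\max_j r_j}\subseteq Z_\delta$, we get $\beta_Z\equiv 1$ on $X\setminus Z_\delta$. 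For the energy we use ${\rm Lip}(\min_j\psi_j)\le\max_j{\rm Lip}(\psi_j)$ and ${\rm Lip}(\psi_j)\le r_j^{-1}\mathbf{1}_{B_{2r_j}(z_j)}$, so $|{\rm Lip}(\beta_Z)|^2\le\sum_j r_j^{-2}\mathbf{1}_{B_{2r_j}(z_j)}$, and the volume bound $\mu(B_{2r_j}(z_j))\le C_0(2r_j)^m$ gives
\[
\int_X f\,|{\rm Lip}(\beta_Z)|^2\,d\mu\ \le\ \sum_j r_j^{-2}\,\mu(B_{2r_j}(z_j))\ \le\ 2^m C_0\sum_j r_j^{m-2}\ <\ 2^m C_0\,\epsilon .
\]
The inequality $m>2$ is precisely what makes $r_j^{-2}(2r_j)^m$ comparable to $r_j^{m-2}$, and the hypothesis $\mathcal H^{m-2}(Z)=0$ (rather than merely finite) is what lets $\sum_j r_j^{m-2}$ be taken arbitrarily small, so no logarithmic cut-off is needed. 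Choose $\epsilon$ so that the right-hand side is $<\eta/8$.

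Now set $\beta=\chi\,\beta_Z\in[0,1]$. On $B_x(1/\delta)\setminus Z_\delta$ both factors are $1$, so $\beta\equiv 1$ there; and from $0\le\chi,\beta_Z\le 1$ and ${\rm Lip}(\beta)\le{\rm Lip}(\chi)+{\rm Lip}(\beta_Z)$ we get $\int_X f\,|{\rm Lip}(\beta)|^2\,d\mu\le 2(\eta/8+\eta/8)<\eta$. It remains to choose the output $\delta_1$: since $U$ is open and contains the compact set $Z\cap\overline{B_x(1/a+1)}$, the number $d\big(Z\cap\overline{B_x(1/a+1)},\,X\setminus U\big)$ is positive, and we let $\delta_1>0$ be smaller than half of it and strictly smaller than $a$. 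Then $\beta$ is supported in $\overline{B_x(1/a)}\subseteq B_x(1/\delta_1)$; and if $y\in Z_{\delta_1}$ lies in $B_x(1/a)$, pick $z\in Z$ with $d(y,z)<\delta_1\le a$, so that $z\in Z\cap\overline{B_x(1/a+1)}$ and $d(y,X\setminus U)\ge d(z,X\setminus U)-d(y,z)>2\delta_1-\delta_1>0$, i.e. $y\in U$ and $\beta_Z(y)=0$; elsewhere $\beta$ vanishes because $\chi$ does. Hence $\beta$ is supported in $B_x(1/\delta_1)\setminus Z_{\delta_1}$, as required.

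The genuinely delicate point is exactly this coordination of scales: the spatial cut-off forces $a$ small, the covering of $Z$ must be fine enough that $\beta_Z$ vanishes on an entire tube $Z_{\delta_1}$ around $Z$ inside $B_x(1/a)$, and the output $\delta_1$—which may be far smaller than $a$—is chosen last so that the two energy bounds, the value $1$ off $Z_\delta$, and the support condition all hold simultaneously. Everything else is the routine linear-cut-off estimate, whose only non-formal inputs are the Ahlfors-type bound $\mu(B_y(r))\le C_0 r^m$ and $\mathcal H^{m-2}(Z)=0$.
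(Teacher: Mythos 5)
Your proof is correct and follows essentially the same route as the paper's: cover $Z\cap \overline{B_x(R)}$ by finitely many small balls centered on $Z$ with $\sum_j r_j^{m-2}$ small, take the minimum of radial cut-offs there, multiply by a wide radial cut-off in $d(\cdot,x)$, bound the energy via $\mu(B_y(r))\le C_0 r^m$, and choose $\delta_1$ last so that $Z_{\delta_1}\cap B_x(1/a)$ is swallowed by the union of the covering balls. The only differences are cosmetic: you omit the paper's extra factor $\zeta(\epsilon/d(y,x))$ that also kills a neighborhood of the base point $x$ (unnecessary for the stated conclusion, since your covering already forces $\beta$ to vanish near $x$ when $x\in Z$), and you make the compactness argument behind the final choice of $\delta_1$ explicit.
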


\begin{proof}
Let  $R\geq \sqrt{\frac{8}{\eta}}+ \frac{2}{\delta}$.   Since $\mathcal{H}^{m-2}(Z)=0$,  then for any $\kappa>0$,
we can  take  finite geodesic  balls  $B_{x_i}(r_i)$  $(r_i\leq \delta)$  with  $x_i\in Z$ to cover  $B_x(R)\bigcap Z$  such that
\begin{align}
\Sigma_ir_i^{m-2}\leq \kappa.\notag
\end{align}
Let $\zeta:\mathbb{R}\rightarrow \mathbb{R}$  be  a cut-off function  which satisfies:
 \begin{align}
\zeta(t)=1, \text{ for }t\leq \frac{1}{2};   \zeta(t)=0, \text{ for }t\geq 1; |\zeta'(t)|\leq2.\notag
\end{align}
Set
$$
\chi(y)=\min_i\{ 1-\zeta(\frac{d(y,x_i)}{r_i})\}$$
and
$$ \beta(y)=\zeta(\frac{\epsilon}{d(y,x)})\zeta(\frac{d(y,x)}{R})\chi(y),
$$
where $\epsilon\le \frac{\delta}{2}$.   Then  it is easy to see that  $\beta$ is supported in  $B_x(R)\setminus \cup B_{x_i}(\frac{r_i}{2})$ with $\beta\equiv 1$ in  $B_x(\frac{1}{\delta})\setminus Z_{\delta}$.  Moreover,
\begin{align}
\int_Xf|Lif \beta|^2d\mu& \leq 4C_0\Sigma_ir_i^{-2}r_i^{m}+ 4C_0\epsilon^{m-2}+\frac{4}{R^2}\notag\\
& \leq 4C_0\kappa+ 4C_0\epsilon^{2n-2}+\frac{\eta}{2}.\notag
\end{align}
 Thus, if we  choose  $\epsilon$ and  $\kappa$  such  that
$4C_0\kappa+ 4C_0\epsilon^{2n-2}\leq \frac{\eta}{2}$,
then  we get (\ref{small-gradient}).
By choosing   $\delta_1 \leq\min\{\frac{\epsilon}{2}, \frac{1}{2R}\}$ such that
$$Z_{\delta_1}\cap B_x(R)\subseteq\cup B_{x_i}(\frac{r_i}{2}),$$    we  can also get
  ${\rm supp}(\beta)\subset B_x(\frac{1}{\delta_1})\setminus Z_{\delta_1}.$
Hence  $\beta$  satisfies all conditions  required in  the lemma.

\end{proof}

\begin{proof}[Proof of Lemma \ref{cut-off}]
Applying Lemma \ref{cut-off-0} to $X=C_x, Z=S_x, f=e^{-\frac{\rho_x^2}{2}}$, we get the lemma.
\end{proof}

By Lemma \ref{cone-regular-part},  we see that  for any $\delta>0$  there  exists a sequence of  $K_j\subset (M_\infty, r_j^{-2}g_\infty) $ which converge
to $V(x;\delta)$. Let $L_0=(C_x, \mathbb{C})$  be  the trivial holomorphic bundle  over $C_x$  with a hermitian metric  $h_0=e^{-\frac{\rho_x^2}{2}}$.
 Then $h_0$ induces  the  Chern connection $\nabla_0$ with its curvature
 $${\rm Ric}(L_0,\nabla_0)=g_x.$$
  In the following we  show that   a sufficiently  large multiple line bundles of   $K^{-1}
 _{\mathcal{R}}|_{K_j}$ will  approximate   to $L_0$ over $V(x;\delta)$.  This is in fact an application  of the following fundamental lemma.

\begin{lem}\label{fundamental-lemma-2}
Let   $(V,g)$  be a $C^2$ open Riemannian manifold and   $U, U'\subset \subset V$  are two  pre-compact open subsets of $V$ with $ U\subset \subset  U'$.
Then   for any positive number $\epsilon$, there exist  a small number $\delta=\delta(U', g, \epsilon)$  and a  positive integer $N=N(U,g, \epsilon)$, which  depends on the fundamental group of $U$, the metric $g$ on $U$, and the small $\epsilon$  such that the following is true:
if a hermitian complex line bundle $(L,h)$ over $V$ with  associated  connection $\nabla$ satisfies
\begin{align}\label{flat-curvature-bundle}
|{\rm Ric}^\nabla|_{g}\leq \delta, ~{\rm  in }~ U',
\end{align}
then there exist  a positive integer  $l\leq N$ and   a section $\psi$ of $L^{\otimes l}$  over $U$ with  $|\psi |_{h}\equiv 1$  which satisfies
\begin{align}\label{psi-nabla}
|D^{\nabla \otimes l} \psi|_{h,g}\leq \epsilon, ~{\rm in} ~ U.
\end{align}

\end{lem}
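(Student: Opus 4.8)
The plan is to produce the section $\psi$ by parallel transport and a monodromy-killing argument, in the spirit of the construction of locally flat approximations to a nearly flat line bundle. First I would fix a base point $p \in U$ and observe that the curvature bound $|{\rm Ric}^\nabla|_g \le \delta$ on the pre-compact set $U'$ means that the holonomy of $\nabla$ around any loop in $U$ based at $p$ is close to the identity, quantitatively: for a loop $\gamma$ bounding a disk $D \subset U'$ of area at most $a$, the holonomy differs from $1$ by at most $\delta a$ (Stokes / Ambrose--Singer). Since $U \subset\subset U'$ is pre-compact with finitely generated fundamental group, one can choose finitely many generating loops $\gamma_1,\dots,\gamma_k$ of $\pi_1(U,p)$, each bounding a disk in $U'$ of area bounded by a constant depending only on $(U',g)$; hence for $\delta$ small the holonomy representation $\pi_1(U,p) \to U(1)$ takes values in an arbitrarily small neighborhood of $1 \in U(1)$.

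Next I would pass to a tensor power to kill the monodromy up to the required error. The holonomy of $\nabla^{\otimes l}$ is the $l$-th power of the holonomy of $\nabla$. The images of $\gamma_1,\dots,\gamma_k$ lie in a small arc of $U(1)$; choosing $l \le N$ suitably (here $N$ depends only on $\pi_1(U)$, the metric $g$ on $U$, and $\epsilon$, via a simultaneous-Dirichlet/pigeonhole argument on the angles) forces each holonomy $\mathrm{hol}_{\nabla^{\otimes l}}(\gamma_j)$ to lie within $\epsilon/C$ of $1$. Then I would define $\psi$ on $U$ by fixing a unit vector $\psi(p)$ in the fiber $(L^{\otimes l})_p$ and parallel transporting along minimizing (or fixed) geodesics from $p$; a priori this is multivalued, with ambiguity governed exactly by the holonomy along loops, but since that holonomy is $\epsilon/C$-close to $1$ one obtains a genuine section by an averaging or small-correction procedure, or simply accepts a section that is single-valued after a further controlled bounded-order power; either way $|\psi|_h \equiv 1$ since parallel transport preserves $h$. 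Finally, $D^{\nabla\otimes l}\psi$ measures the failure of $\psi$ to be parallel, which along a geodesic from $p$ to $y$ is the integral of the curvature over a thin triangle/variation and is therefore bounded by $\delta$ times a geometric constant depending on $U'$; shrinking $\delta$ gives $|D^{\nabla\otimes l}\psi|_{h,g} \le \epsilon$ on $U$.

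The main obstacle, and the step requiring the most care, is making the parallel-transport construction of $\psi$ genuinely well-defined (single-valued) on $U$ rather than merely multivalued with small monodromy: a line bundle with nontrivial, even if tiny, holonomy has \emph{no} global flat unit section at all, so one cannot literally parallel transport. The correct fix is to first replace $\nabla$ by a gauge-equivalent connection $\nabla'$ on $U$ whose curvature is still $O(\delta)$ but whose holonomy is trivial modulo the error we can tolerate — concretely, write $\mathrm{Ric}^\nabla = d\alpha$ on $U'$ with $\|\alpha\|$ controlled (possible since the relevant cohomology class is represented by a form of size $\delta$ and $U'$ has bounded geometry), absorb $\alpha$ into a new connection, and handle the residual harmonic/topological part by the power $l$. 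Then $\nabla'$ genuinely has a parallel unit section $\psi$ on $U$, and $|D^{\nabla\otimes l}\psi|_{h,g} = |D^{\nabla'\otimes l}\psi + l(\alpha\text{-correction})\psi|$ is bounded by $\epsilon$ once $\delta$ is small. I would organize the write-up so that the choice of $\delta$ (depending on $U',g,\epsilon$, for the curvature-to-primitive estimate) is made after the choice of $N$ (depending on $U,g,\epsilon$, through $\pi_1(U)$ and the holonomy-quantization), matching the dependencies asserted in the statement.
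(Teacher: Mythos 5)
Your overall architecture coincides with the paper's: first replace $\nabla$ by a nearby flat connection $\nabla'$ with $\|\nabla-\nabla'\|\le C(U',g)\,\delta$, then view the holonomy of $\nabla'$ as a homomorphism $\pi_1(U)\to U(1)$ and use a pigeonhole/simultaneous--Dirichlet argument to find $l\le N$ (with $N$ depending on the number of generators, equivalently on $\Hom(\pi_1(U),S^1)\cong G\times\mathbb T^k$ as in the paper) so that the $l$-th power of the holonomy lies in a prescribed neighborhood of the identity, and finally perturb the resulting almost-parallel multivalued unit section of $L^{\otimes l}$ to a genuine section. The one substantive difference is how $\nabla'$ is built: the paper uses a \v Cech argument (local parallel frames $e_i$ on convex balls, transition functions nearly constant by the curvature bound, corrected by functions $f_i$ with $|Df_i|\le C\delta$), whereas you propose a global primitive $\alpha$ of ${\rm Ric}^\nabla$ with $\|\alpha\|_{C^0}\le C(U',g)\delta$. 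That route is viable, but the existence of a $C^0$-controlled primitive on a general precompact $U'$ (not star-shaped) is exactly the point that needs an argument --- Hodge theory plus elliptic regularity, or a patching of local Poincar\'e-lemma primitives, with constants depending on $(U',g)$ --- and in both routes one must first observe, as the paper does, that $c_1(L)|_{U'}=0$ for $\delta$ small because an integral class paired with cycles of bounded area must vanish.

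There is, however, one concretely false step in your first paragraph: the claim that generators $\gamma_1,\dots,\gamma_k$ of $\pi_1(U,p)$ can be chosen ``each bounding a disk in $U'$ of area bounded by a constant,'' from which you conclude that the holonomy representation already takes values near $1\in U(1)$. A homotopically nontrivial loop in $U$ need not bound any disk in $U'$ (the inclusion $U\hookrightarrow U'$ need not kill $\pi_1$), and the monodromy of the flat part of the connection around such a loop can be an arbitrary element of $U(1)$, completely uncontrolled by $\delta$. Indeed, if your claim were true the lemma would hold trivially with $l=N=1$, and the statement's insistence that $N$ depend on $\pi_1(U)$ would be pointless. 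The Stokes/Ambrose--Singer bound $|{\rm hol}(\gamma)-1|\le\delta\cdot{\rm area}$ applies only to nullhomotopic loops; for the others the \emph{only} tool is the passage to the power $l$ via pigeonhole. Fortunately your second and third paragraphs do carry out that pigeonhole without actually using the false premise (simultaneous Dirichlet on $(S^1)^k$ works for arbitrary starting angles), so the error is a conceptual confusion between curvature-induced holonomy and flat monodromy rather than a gap that sinks the proof; but it should be excised, and the claim ``the images of $\gamma_1,\dots,\gamma_k$ lie in a small arc'' deleted with it.
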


\begin{proof} The proof seems standard.
First  we show that $(L, U)$ is a flat bundle with respect to some connection.
 Let  $B_{x_i}(r_i)$ ($r_i\leq 1$)  be finite convex geodesic balls in $V$
 such that $\bar{U}\subset \cup B_{x_i}(r_i)\subset U' $.    Then for $y \in B_{x_i}(r_i)$  there exists a  minimal geodesic curve $\gamma_y$ in $B_{x_i}(r_i)$,
   which connects $x_i$ and  $y$.  Picking  any  vector $s_i\in L_{x_i}$ with $|s_i|=1$  and using   the parallel transportation,
 we   define  a  parallel vector  field by
\begin{align}
e_i(y)={\rm Para}_{\gamma_y}(s_i), ~\forall~y\in B_{x_i}(r_i).\nonumber
\end{align}
In particular,  $De_i(x_i)=0$.  Let  $T$  be a  vector field,   which is tangent to  $\gamma_y$,  and   $X$  another vector field with  $[T,X]=0$.  Then
\begin{align}\label{ricci-formula-bundle}
D_T[D_Xe_i]=D_X[D_Te_i]+{\rm Ric}^\nabla (T,X)e_i={\rm Ric}^\nabla (T,X)e_i.
\end{align}
By the condition (\ref{flat-curvature-bundle}),    it  follows
\begin{align}\label{gradient-vector}
|D e_i|_{h,g}\leq C({U'},g)||\text{Ric}^\nabla||_{({U'},g)}\le  C({U'},g)\delta,~{\rm in}~ B_{x_i}(r_i) .
\end{align}
This implies that  the transformation function  $g_{ij}$ of $L$ is  nearly  constant in $B_{x_i}\cap B_{x_j}$.
Since
 the first Chern class   lies in the secondary integral cohomology group,   $L$ is topologically  trivial  as long as  $\delta$ is small, i.e., $c_1(L)=0$.
 Hence,   there exist
 some complex functions $f_i$ over $B_{x_i}(r_i)$ such that
\begin{align}\label{gradient-f}
|D f_i|\leq  C({U'},g) \delta<<1,
\end{align}
and the transition functions for $\tilde{e_i}=f_ie_i$ are constant.  Here  $\|\tilde{e_i}\|_{h}=\|e_i\|_h.$   As a consequence,   we can define an associated connection $\nabla'$ on $L$ to $h$ such that
$$|\tilde{e_i}|_{h}=1~{\rm and}~D^{\nabla'}\tilde{e_i}=0.$$
 In fact,  if we   set  $\nabla'=\nabla+\alpha\otimes e_i $,   then locally,
$$ D^{ \nabla'}\tilde{e_i}=D^\nabla(f_ie_i)+\alpha(\tilde{e_i})=f_iD^\nabla e_i+df_i\otimes e_i+ f_i\alpha\otimes e_i.$$
Thus
$$\alpha=-\frac{1}{f_i}(df_i +\langle  f_iD^\nabla e_i, e_i\rangle_h) $$
 which  is   uniquely determined by  requiring $ D^{\nabla'}(\tilde{e_i})=0$.
 Therefore, $ (L, \nabla')$ is a  flat bundle over $U$  with respect to $\nabla'$.  Moreover,  by (\ref{gradient-vector}) and (\ref{gradient-f}), we have
\begin{align}\label{nabla-comparison}   \| \nabla-\nabla' \|_{(U,g)} = \|\alpha \|_{(U,g)}\leq  C({U'},g)||\text{Ric}^\nabla||_{({U'},g)}\le C({U'},g)\delta.\end{align}

 Next we note that
the holonomy group  of a  flat bundle over $U$  is an element of $Hom(\pi_1(U),\mathbb{S}^1)\cong G\times \mathbb{T}^k$ for some finite group $G$ with  order $m_1$,
where $k$ is the  Betti number of $\pi_1(U)$.   By  the pigeon-hole principle,  we see  that for any $\gamma$-neighborhood $W\subseteq\mathbb{T}^k$ of  the identity there
exists a positive integer $m_2=m_2(\gamma)$  such that for any element $\rho\in\mathbb{T}^k$, $\rho^a\in W$ for some  number $a $ $(1\leq a\leq m_2).$
As a consequence,
for any  element  $t\in  G\times \mathbb{T}^k$, there exists
$l~ (1\leq l\leq N=m_1m_2)$  such that   $t^l\in W.$
Hence,  there exist  $l$  and a smooth section $\psi$ of $L^{\otimes l}$ over $U$ by perturbing a parallel vector field in $L^{\otimes l}$
   such that
   $$||\psi|_h-1|, \| \psi^{{\nabla'}^{\otimes l}}\|_{h,g}\leq  C(U',g)\gamma(\delta), {\rm in}~ U. $$
Moreover, By (\ref{nabla-comparison}),  we  can   normalize $\psi$ by $|\psi|_h\equiv 1$ so that $(\ref{psi-nabla})$ is true. The lemma is proved.
\end{proof}

\begin{prop}\label{almost-falt-bundle}  Let $x\in M_\infty$ and $\delta_1>0$. Then for any $\epsilon>0$, there exist a positive
 integer $N=N(V(x;\delta_1),\epsilon)$  and a  large integer   $j_0$ such that  for  $j\ge j_0$  there exist $l=l(j)\le N$,  and
 a  sequence of $ {K}_j\subseteq M_\infty$ and   a sequence of   pairs of isomorphisms ($\phi_j,\psi_j$)
with property:
 \begin{align}\label{bundle}
\CD
 L_0@>{\psi}_j>> K_{\mathcal{R}}^{-ll_j}|_{{K}_j} \\
  @V  VV @V  VV  \\
  V(x;\delta_1) @>{\phi}_j>>{K}_j,
\endCD
\end{align}
which satisfy
\begin{align}
{\phi}_j^*(ll_j g_\infty)\rightarrow g_x,~{\rm as}~j\to\infty,\notag
\end{align}
and
$$ | D \psi_j|_{g_x}\le \epsilon,   ~{\rm in}~V(x;\delta_1).$$
\end{prop}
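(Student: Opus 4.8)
The plan is to combine the convergence result of Lemma \ref{cone-regular-part} with the approximation Lemma \ref{fundamental-lemma-2}, applied to the anti-canonical bundle on the rescaled manifolds. First I would fix the open set $U=V(x;\delta_1)$ and choose a slightly larger pre-compact set $U'=V(x;\delta_1/2)\Subset\mathcal{CR}$, which sits in an open K\"ahler-Ricci flat manifold by Lemma \ref{cone-regular-part}. Applying Lemma \ref{fundamental-lemma-2} to $(U,U'\subset\mathcal{CR})$ and the prescribed $\epsilon>0$ produces a threshold $\delta=\delta(U',g_x,\epsilon)$ and an integer $N=N(U,g_x,\epsilon)$. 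By Lemma \ref{cone-regular-part} there are diffeomorphisms $\phi_j$ from a neighborhood of $\overline{U'}$ onto open sets $K_j'\subset(M_\infty,r_j^{-2}g_\infty)$ with $\phi_j^*(r_j^{-2}g_\infty)\to g_x$ in $C^\infty$ on $\overline{U'}$; set $K_j=\phi_j(U)$.

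Next I would transport the bundle picture through $\phi_j$. On the regular part $\mathcal{R}$ we have $\mathrm{Ric}(g_\infty)=g_\infty$ (Lemma \ref{regular-part}), so the induced Hermitian metric on $K_{\mathcal{R}}^{-1}$ has curvature $g_\infty$, hence the curvature of $K_{\mathcal{R}}^{-1}$ with respect to the rescaled metric $r_j^{-2}g_\infty$ is $r_j^2 g_\infty$, which pulled back via $\phi_j$ and measured in $\phi_j^*(r_j^{-2}g_\infty)$ converges to $\mathrm{hess}\,(\rho_x^2/2)=g_x$; in particular its $g_x$-norm is uniformly bounded but not small. To get the smallness hypothesis \eqref{flat-curvature-bundle} I would not apply Lemma \ref{fundamental-lemma-2} to $K_{\mathcal{R}}^{-1}$ directly but rather to the bundle $L_0^{-1}\otimes\phi_j^*K_{\mathcal{R}}^{-1}$ (equivalently, twist by the flat-up-to-small-error comparison bundle $L_0$ whose curvature is exactly $g_x$): since both curvatures converge to $g_x$ in $C^0(U')$, their difference has $g_x$-norm $<\delta$ for $j\ge j_0$ large. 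Lemma \ref{fundamental-lemma-2} then yields $l=l(j)\le N$ and a unit section $\psi_j$ of $(L_0^{-1}\otimes\phi_j^*K_{\mathcal{R}}^{-1})^{\otimes l}$ over $U$ with $|D^{\nabla}\psi_j|_{g_x}\le\epsilon$. Untwisting, $\psi_j$ becomes a bundle isomorphism $L_0^{\otimes l}\to \phi_j^*K_{\mathcal{R}}^{-l}$, i.e. (after identifying $L_0^{\otimes l}$ with $L_0$ up to an inessential relabeling, or simply restating the claim for $L_0^{\otimes l}$ in place of $L_0$) the diagram \eqref{bundle} with $ll_j$ replaced by the relevant multiple; the near-parallel estimate $|D\psi_j|_{g_x}\le\epsilon$ is exactly \eqref{psi-nabla} after transferring back through $\phi_j$, using that $\phi_j^*(ll_j g_\infty)\to g_x$ so that norms computed in the two metrics differ by $1+o(1)$.

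The remaining bookkeeping is to check that the powers match: Lemma \ref{fundamental-lemma-2} gives $l\le N$ with $N$ depending only on $\pi_1(U)$, $g_x|_U$ and $\epsilon$, and $l_j=r_j^{-2}$ is an integer by the normalization in \eqref{tagent-cone-sequence}, so $ll_j$ is the exponent appearing in \eqref{bundle}; the pulled-back metric statement $\phi_j^*(ll_j g_\infty)\to g_x$ follows since $\phi_j^*(l_j g_\infty)\to g_x$ and $l$ is bounded — here one passes to a subsequence along which $l=l(j)$ is constant, harmless for the conclusion. I expect the main obstacle to be precisely the curvature-smallness step: the anti-canonical curvature on the rescaled space does not tend to zero, only to the fixed cone metric $g_x$, so Lemma \ref{fundamental-lemma-2} cannot be invoked as a black box and one must first factor out the model bundle $(L_0,h_0)$ whose curvature equals $g_x$ on the nose, reducing to a genuinely-almost-flat bundle; keeping track of which metric ($g_x$ versus $\phi_j^*(ll_j g_\infty)$) each estimate is measured in, and showing these are $C^0$-close on $\overline{U'}$, is where the care is needed.
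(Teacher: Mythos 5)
Your overall strategy coincides with the paper's: pull back $K_{\mathcal R}^{-1}$-powers through the diffeomorphisms of Lemma \ref{cone-regular-part}, twist by the model bundle $(L_0,h_0)$ whose curvature is exactly $g_x$ so as to land in the almost-flat setting of Lemma \ref{fundamental-lemma-2}, and you correctly identify this twist as the crux. But two of your bookkeeping claims are wrong, and one of them hides the real content of the proof. First, the bundle that must be twisted is $\tilde\phi_j^*\bigl(K_{\mathcal R}^{-l_j},h_\infty^{\otimes l_j}\bigr)\otimes(L_0,h_0)^*$, not $L_0^{-1}\otimes\phi_j^*K_{\mathcal R}^{-1}$. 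The curvature form of $(K_{\mathcal R}^{-1},h_\infty)$ is $\omega_{g_\infty}$, whose norm in the rescaled metric $l_jg_\infty$ is $O(l_j^{-1})\to 0$; it does \emph{not} converge to $g_x$. It is the $l_j$-th power, with curvature $l_j\omega_{g_\infty}=\omega_{l_jg_\infty}\to\omega_{g_x}$, that is comparable to $L_0$. Twisting $K_{\mathcal R}^{-1}$ by $L_0^*$ as you wrote would produce curvature close to $-g_x$, violating hypothesis (\ref{flat-curvature-bundle}).

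Second, the passage from $L_0^{\otimes l}$ back to $L_0$ is not an ``inessential relabeling,'' and your assertion that $\phi_j^*(ll_jg_\infty)\to g_x$ ``follows since $\phi_j^*(l_jg_\infty)\to g_x$ and $l$ is bounded'' is false: $\phi_j^*(ll_jg_\infty)=l\,\phi_j^*(l_jg_\infty)\to l\,g_x\neq g_x$ for $l\ge 2$. The paper resolves this by exploiting the scale invariance of the cone: the dilation $\mu_l\colon y\mapsto y/\sqrt{l}$ satisfies $\mu_l^*g_x=g_x/l$ and $\mu_l^*L_0^{\,l}\cong L_0$ isometrically (because $\mu_l^*\rho_x^2=\rho_x^2/l$, so $\mu_l^*e^{-l\rho_x^2/2}=e^{-\rho_x^2/2}$); composing $\phi_j\circ\mu_l$ and rescaling $g_x/l$ back to $g_x$ is exactly what converts $l_j$ into $ll_j$ in the target and restores $L_0$ with its specific Gaussian metric in the source. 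This metric matters downstream (it is the weight in Lemma \ref{cut-off} and in the peak-section construction of Section 5), so restating the proposition for $L_0^{\otimes l}$ is not harmless. A related point you omit: since $\mu_l$ shrinks the inner radius of the annular region, the section produced by Lemma \ref{fundamental-lemma-2} on $U$ must first be extended to $U(x;\epsilon_1,\epsilon_2/\sqrt{l},R)$ by parallel transport along the cone rays, with the derivative estimate controlled via (\ref{ricci-formula-bundle}); without this extension the composed maps are not defined on all of $V(x;\delta_1)$.
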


\begin{proof}

Define an open  set  $U$  of $\mathcal{CR}$  by
 \begin{align}
U= U(x;\epsilon_1,\epsilon_2,R)=\{y\in C_x|~{\rm dist}(\bar{y},S_x)\geq \epsilon_1, \epsilon_2 \leq d(y,x)\leq R\},\notag
 \end{align}
 where $\bar{y}$ is the projection to  the section $Y$ of $C_x=C(Y)$.  Then there exist some $\epsilon_1,\epsilon_2$ and $R$  such that
$$V(x;\delta_1)\subseteq  U(x;\epsilon_1,\epsilon_2,R). $$
Moreover, we can choose a sequence of integers  $l_j=\frac{1}{ r_j^2}$   such that
$$(M_\infty,  l_j g_\infty, x)\rightarrow (C_x, g_x, x),~{\rm as}~j\to \infty.$$
Hence by  Lemma  \ref{cone-regular-part}, there exist a sequence of   $ \tilde {K}_j\subseteq M_\infty$ and   a sequence of  diffeomorphisms $\tilde{\phi}_j$ from
$ U(x;\epsilon_1,\frac{\epsilon_2}{\sqrt{N}},R)$ to  $\tilde K_j$ such that $\tilde{\phi}_j^*(l_j g_\infty)\rightarrow g_x,$ where $N=N(U,g_x,\epsilon)$ is a large integer
as determined in   Lemma \ref{fundamental-lemma-2}.

Let  $h_\infty$ be  the induced hermitian metric on $K_{\mathcal R}^{-1}$  by $g_\infty$ on the regular part $\mathcal R$ of
$M_\infty$. Let
$$ (L_j, h)=\tilde{\phi}_j^* (K_{\mathcal R}^{-l_j},h_\infty^{\otimes l_j})\otimes (L_0, h_0)^{*}$$
be product complex line bundle on $U$, where  $h$ is an induced  hermitian metric by $h_\infty$ and $h_0$  with  associated  connection $\nabla_j$ on $L_j$ for each $j$.
Clearly,
  $$\|\text{Ric}^{\nabla_j}\|_{(U',g_x)}\leq \delta<<1,$$
as long as $j$ is large enough, where     $U'\subset\subset \mathcal CR$ is an open set such that $\bar U\subset\subset U'$.
 Applying  Lemma \ref{fundamental-lemma-2} to $L_j$
over  $U'$,  we  see that
there exist  some positive integer $l=l(j)\leq N$  and a section $\psi'$ on  $L_j^{\otimes l}$ such that
$$|D^{\nabla_j^{\otimes l}}\psi'|_{(U,g_x)}\leq \epsilon.$$

Let $Y_{\epsilon_1}= U(x;\epsilon_1,\epsilon_2,R)\bigcap Y$ and $\tilde{\psi}$ an
  extension section over  $U(x;\epsilon_1,\frac{\epsilon_2}{\sqrt{l}},$
  \newline $R)$ of  the restriction of  $\psi'$ on $Y_{\epsilon_1}$
    by  the parallel transportation along rays from $x$.  Clearly,
    $$\|\tilde \psi\|_{\otimes^l h}\equiv 1.$$
     Moreover,  by the formula (\ref{ricci-formula-bundle}), it is easy to see
\begin{align}
|D^{\nabla_j^{\otimes l}}\tilde{\psi}|_{(U(x;\epsilon_1,\frac{\epsilon_2}{\sqrt{l}},R),g_x)}\leq \frac{\sqrt{l}}{\epsilon_2}(\epsilon+C_0R^2\delta),
\end{align}
where  the constant $C_0$ depends only on $(Y,g_x)$. Thus we  have  pairs of isomorphisms $(\tilde{\phi}_j, \tilde{\psi}_j)$ with property:
\begin{align}\label{bundle-1}
\CD
  L_0^l@>\tilde{\psi}_j>> K_{M_\infty}^{-l_jl}|_{K_j} \\
  @V  VV @V  VV  \\
(U(x;\epsilon_1,\frac{\epsilon_2}{\sqrt{l}},R),g_x)  @>\tilde{\phi}_j>>(K_j,l_jg_\infty),
\endCD
\end{align}
which satisfy
\begin{align}\label{no-scale}
|D \tilde{\psi}_j|_{g_x}\le 2\frac{\sqrt{l}}{\epsilon_2}\epsilon,
\end{align}
 as long as  $j$ is large enough.

Rescaling $U(x;\epsilon_1,\epsilon_2,R)$  into  $U(x;\epsilon_1,\frac{\epsilon_2}{\sqrt{l}},R)$ by
 $$\mu_l:~y\rightarrow \frac{y}{\sqrt{l}}, y\in U(x;\epsilon_1,\epsilon_2,R). $$
 We have isometrics
  $$\mu_l^*L_0^l\cong L_0,\mu_l^*g_x=\frac{g_x}{l}.$$
By (\ref{bundle-1}),   it follows
\begin{align}\label{bundle-2}
\CD
L_0@>\tilde\psi_j\circ (\mu_l^*)^{-1} >> K_{M_\infty}^{-l_jl}|_{K_j} \\
  @V  VV @V  VV  \\
(U(x;\epsilon_1,\epsilon_2,R),\frac{g_x}{l}) @>\tilde\phi_j\circ \mu_l >>(K_j,l_jg_\infty).
\endCD
\end{align}
 Let
  $${\phi}_j=\tilde\phi_j\circ \mu_l, ~{\rm and}~{\psi}_j=\tilde\psi_j\circ (\mu_l^*)^{-1}.$$
  Note that $V(x;\delta_1)\subseteq U(x;\epsilon_1,\epsilon_2,R).$    Then $K_j={\phi}_j(V(x;\delta_1))$ is well-defined.
 Hence,  rescaling the metric $\frac{g_x}{l}$  back to $g_x$, we get from (\ref{no-scale}),
 \begin{align}\label{gradient-psi-2} |D {\psi}_j|_{g_x}\le 2\frac{\epsilon}{\epsilon_2}, ~{\rm in}~V(x;\delta_1).\end{align}
Replacing $2\frac{\epsilon}{\epsilon_2}$ by $\epsilon$, we prove the proposition.
\end{proof}

Proposition \ref{almost-falt-bundle} will be used to construct  peak sections of  holomorphic line bundles over
  a sequence of K\"ahler manifolds in next section.

\vskip3mm

\section{$\bar\partial$-equation  and construction of  holomorphic sections }

In this section, we give a construction of  peak  holomorphic sections  by solving  $\bar\partial$-equation   on a  smoothing
sequence of almost K\"ahler-Einstein manifolds in \cite{TW}.  We will use the rescaling method as  done  for the K\"ahler-Einstein manifolds sequence in \cite{DS},  \cite{T6}.

\begin{prop}\label{partial}
Let $\{(M_i, g^i)\}$ be  a sequence of almost K\"ahler-Einstein  Fano manifolds as in Section 3 and $ (M_\infty, g_\infty)$ be its  Gromov-Hasusdorff limit. Then  for any
sequence  of  $p_i\in M_i$ which converges  to $x\in M_\infty$,  there  exist  two large number $l_x$ and $i_0$,  and
  a small time  $ t_x$    such that for any $i\ge i_0$  there exists a holomorphic section  $s_i\in \Gamma (K_{M_i}^{-l_x}, h^i_{t_x})$ which satisfies
 \begin{align}\label{l2-norm-less-1}
 \int_{M_i}|s_i|_{h^i_{t_x}}^2{\rm dv}_{g^i_{t_x}}\le 1~{\rm and}~
 |s_i|_{h^i_{t_x}}(p_i) \geq  \frac{1}{8},
\end{align}
 where $g^i_t$   are  solutions  of (\ref{Ricci flow}) with the initial metrics  $g^i$ and  $h^i_{t_x}$  are  the hermitian metrics of $K_{M_i}^{-l_x}$  induced by
$g^i_{t_x}$.
\end{prop}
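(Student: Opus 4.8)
The plan is to construct $s_i$ in three stages: first build an approximate holomorphic section near $p_i$ using the flat model bundle on the tangent cone, then correct it by solving a $\bar\partial$-equation, and finally control the correction so that the peak value survives. The scale at which we work is dictated by a tangent cone $C_x$ at $x\in M_\infty$; fix a sequence $r_j\to 0$ realizing $C_x$ with $l_j=r_j^{-2}$ integers, and let $\ell_x$ be a large multiple $\ell_x = \ell\, l_j$ for a suitable fixed $j$. On the model side, the trivial bundle $L_0=(C_x,\C)$ with metric $h_0=e^{-\rho_x^2/2}$ has a canonical ``peak'' section $\sigma_0\equiv 1$ with $|\sigma_0|_{h_0}(x)=1$ and Gaussian decay $|\sigma_0|_{h_0}=e^{-\rho_x^2/2}$; its $\bar\partial$ is $\partial(\rho_x^2/2)$ in the appropriate sense, which is small near $x$ but not compactly supported. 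I would first multiply $\sigma_0$ by the cut-off $\beta$ from Lemma \ref{cut-off} supported in $V(x;\delta_1)$, getting a section $\beta\sigma_0$ on the model whose $L^2$-norm is close to that of $\sigma_0$ and whose $\bar\partial$ has small $L^2$-norm (controlled by $\eta$ from Lemma \ref{cut-off} plus the intrinsic smallness of $\bar\partial\sigma_0$ on the Ricci-flat cone).

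Next I would transplant this to $M_i$. Using Proposition \ref{almost-falt-bundle}, for $j$ large there is a pair of isomorphisms $(\phi_j,\psi_j)$ identifying $L_0$ over $V(x;\delta_1)$ with $K_{\mathcal R}^{-\ell l_j}$ over a subset $K_j\subset M_\infty$, with $\phi_j^*(\ell l_j\, g_\infty)\to g_x$ and $|D\psi_j|_{g_x}\le\epsilon$. Then, since the regular parts $(K_j\subset\mathcal R,\, r_j^{-2}g_\infty)$ are $C^\infty$-limits of rescaled pieces of $(M_i,g^i)$ — and, crucially for us, of the Ricci-flow-smoothed metrics $(M_i,g^i_{t})$ for a well-chosen small time $t=t_x$, because by Proposition \ref{estimate-u-R} the metrics $g^i_t$ have uniformly bounded geometry and converge to the same limit as $i\to\infty$ for fixed $t$ — I can pull the section $\psi_j(\beta\sigma_0)$ further back to an honest smooth section $\tilde s_i$ of $K_{M_i}^{-\ell_x}$ supported in a small ball around $p_i$, of the form (peak section)$\times$(cutoff). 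Here $\ell_x$ and $t_x$ and $i_0$ get fixed: $\ell_x$ from the bound $N$ in Proposition \ref{almost-falt-bundle}, $t_x$ small enough that assumption iii) in \eqref{almost-ke-condition} makes $\int_{M_i}|R(g^i_{t_x})-n|$ and hence $\|\nabla u^i\|_{g^i_{t_x}}$ tiny (Proposition \ref{estimate-u-R}, \eqref{gradient-estimate-2}), and $i_0$ so that the $C^\infty$-approximation on the relevant ball is good. With $\|\nabla u^i\|_{g^i_{t_x}}\le 1$ and $\ell_x\ge 4n$, Proposition \ref{corollary-1} gives both the Sobolev/gradient estimate \eqref{uniform-gradient} and the $L^2$ $\bar\partial$-estimate \eqref{L^2-ke}.

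Then I would solve $\bar\partial v_i=\bar\partial\tilde s_i$ on $(M_i, K_{M_i}^{-\ell_x}, h^i_{t_x})$ using \eqref{L^2-ke}:
\begin{align}
\int_{M_i}|v_i|^2_{h^i_{t_x}}\,{\rm dv}_{g^i_{t_x}}\le 4\ell_x^{-1}\int_{M_i}|\bar\partial\tilde s_i|^2_{h^i_{t_x}}\,{\rm dv}_{g^i_{t_x}},\notag
\end{align}
and set $s_i=c_i(\tilde s_i-v_i)$, a genuine holomorphic section, with $c_i$ the $L^2$-normalizing constant so that $\int_{M_i}|s_i|^2_{h^i_{t_x}}{\rm dv}_{g^i_{t_x}}=1$; this gives the first inequality in \eqref{l2-norm-less-1}. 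For the peak bound $|s_i|_{h^i_{t_x}}(p_i)\ge 1/8$, I would estimate $|v_i|_{h^i_{t_x}}(p_i)$ pointwise by combining the $L^2$-bound on $v_i$ with the gradient/$C^0$-estimate \eqref{uniform-gradient} applied to $v_i$ (which is holomorphic near $p_i$ since $\bar\partial v_i=\bar\partial\tilde s_i=0$ there): $|v_i|_{h^i_{t_x}}(p_i)\lesssim \ell_x^{(n+?)/2}\|v_i\|_{L^2}\lesssim \ell_x^{\cdots}\,\ell_x^{-1/2}\|\bar\partial\tilde s_i\|_{L^2}$, and $\|\bar\partial\tilde s_i\|_{L^2}$ is made as small as we like by choosing $\eta$ (in Lemma \ref{cut-off}), $\epsilon$ (in Proposition \ref{almost-falt-bundle}), and $j$ appropriately, all before fixing $\ell_x$. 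Meanwhile $|\tilde s_i|_{h^i_{t_x}}(p_i)$ is close to $|\sigma_0|_{h_0}(x)=1$ by the $C^\infty$-convergence, so $|\tilde s_i-v_i|_{h^i_{t_x}}(p_i)\ge 1/2$ say, and $\|\tilde s_i-v_i\|_{L^2}\le 2$, giving $c_i\ge 1/2$ and hence $|s_i|_{h^i_{t_x}}(p_i)\ge 1/4\ge 1/8$. The main obstacle is the interleaving of the quantifiers: the Sobolev constant, the gradient estimate constant $C$ in \eqref{uniform-gradient}, and the $\bar\partial$-estimate are uniform in $t$ and $i$ (that is exactly the content of Proposition \ref{corollary-1}), so the dangerous $\ell_x$-power in the pointwise bound on $v_i$ must be beaten by first making $\|\bar\partial\tilde s_i\|_{L^2}$ small \emph{for a fixed $\ell_x$} — i.e. one fixes $\ell_x=\ell_x(x)$ (depending on the tangent cone, via $N$ in Proposition \ref{almost-falt-bundle}), then chooses $\eta,\epsilon,j$ and then $t_x,i_0$; getting this order right, and verifying that the Ricci-flow smoothing $g^i_{t_x}$ genuinely $C^\infty$-approximates the same Ricci-flat local model as $g^i$ on the scale $\ell_x l_j$ (so that Proposition \ref{almost-falt-bundle}, stated for $g_\infty$, transfers to the flowed metrics), is the technical heart of the argument.
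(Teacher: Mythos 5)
Your overall architecture matches the paper's: transplant a cut-off section of the model bundle $L_0$ on the tangent cone to $K_{M_i}^{-l_x}$ via Proposition \ref{almost-falt-bundle} and the time-$t_x$ smoothed metrics, solve $\bar\partial\sigma_i=\bar\partial v_i$ with the uniform $L^2$-estimate, and subtract. But there is a genuine gap at the final step, precisely at the point where the proposition is nontrivial. You claim that $|\tilde s_i|_{h^i_{t_x}}(p_i)$ is close to $|\sigma_0|_{h_0}(x)=1$ by $C^\infty$-convergence. The approximate section is supported in (the image of) $V(x;\delta_1)=\{y\in C_x:\ {\rm dist}(y,S_x)\ge\delta_1,\ d(y,x)\le 1/\delta_1\}$, which by construction \emph{excludes} a neighborhood of the singular set $S_x$ of the cone --- and when $x\in{\rm Sing}(M_\infty)$ the cone vertex itself lies in $S_x$. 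Since $p_i\to x$ corresponds, after rescaling, to a point converging to the vertex, $p_i$ may lie outside the support of $v_i$ entirely, so $|v_i|(p_i)$ can be $0$ and no smooth-convergence argument gives a lower bound there. The paper instead establishes $|s_i|\ge 3/8-1/8=1/4$ only at points $q_1\in(\varphi_i\circ\psi_{j_0})(V(x;2\delta)\cap B_x(3\delta))$, i.e.\ in the good region at rescaled distance $\le 3\delta$ from the vertex, and then transfers the bound to $p_i$ using the gradient estimate \eqref{uniform-gradient} applied to the genuinely holomorphic $s_i$: $d(p_i,q_1)\le 4\delta r_{j_0}/\sqrt{l}$ while $\|\nabla s_i\|\lesssim C_1\sqrt{l}\,r_{j_0}^{-1}$, so the loss is $\lesssim C_1\delta$, which is why $\delta$ is fixed at the outset in terms of the uniform constant $C_1$. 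This step is absent from your argument, and without it the peak bound at $p_i$ does not follow.

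A second, related problem: you propose to bound $|v_i|(p_i)$ (your correction term) by applying \eqref{uniform-gradient} to $v_i$, asserting it is ``holomorphic near $p_i$.'' That estimate is a global Moser iteration relying on $\Delta|s|^2=|\nabla s|^2-nl|s|^2$ for \emph{holomorphic} $s$; the solution of the $\bar\partial$-equation is not holomorphic, and local holomorphicity near $p_i$ does not feed into that global $L^2\to C^0$ bound. The paper avoids this by using the purely local elliptic estimate \eqref{elliptic} on the model region $V(x;\delta)$, where $|\bar\partial\sigma_i|_{C^0}$ is small by $iii')$ of \eqref{l-norm-2}; the rescaling powers $(ll_{j_0})^n$ then cancel exactly against the $L^2$-norm of $\sigma_i$, with $\eta\le\delta^{2n}/(1000C_2^2)$ fixed independently of $l_x$ (which also removes the quantifier circularity you worry about, since $l_x=l\,l_{j_0}$ cannot be fixed before $j_0$). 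Both issues are repairable, but they require the paper's two devices: the local elliptic estimate for the correction, and the evaluation-at-$q_1$-plus-gradient-transfer for the peak value.
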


\begin{proof}
As in Section 4,  we let
\begin{align}
(C_x,\omega_x,x)=\lim_{j\rightarrow \infty}(M_\infty,\frac{g_\infty}{r_j^2},x).\notag
\end{align}
 Choose a $\delta$ so that
$\delta \leq (2\pi)^{-\frac{n}{2}}\frac{ C_1}{64}$,   where $C_1$ is a constant  chosen as in (\ref{uniform-gradient}).  We  consider the $\overline\partial$-equation  for sections
 on the  trivial  line bundle
$L_0=(V(x;\delta), \mathbb C )$,
  \begin{align}
  \bar{\partial}\sigma=f, ~\forall ~ f\in \Gamma^\infty( (TV^*)^{(0,1)}\otimes  L_0).\notag
  \end{align}
 Then the standard $C^0$-estimate  for the elliptic equation shows
\begin{align}\label{elliptic}
  |\sigma|_{C^0(V(x;2\delta))}\leq C_2(|f|_{C^0(V(x;\delta))}+\delta^{-n}[\int_{V(x;\delta)}|\sigma|^2 d{\rm v}g_x]^\frac{1}{2}),
  \end{align}
where the constant  $C_2$ depends on the metric $g_x$.

 Let  $0<\eta\le \frac{\delta^{2n}}{1000C_2^2}$ and $\beta$  a cut-off function  supported in $V(x;\delta_1)$  constructed in  Lemma \ref{cut-off}.
Let $K_j$ be the  sequence of open sets in $M_\infty$ which converge to $V(x;\delta_1)$  and  $\psi_j$ be  the  sequence of  isomorphisms  from $L_0$ to
 $ K_{ \mathcal{R}}^{-ll_j}|_{{K}_j} $
 constructed  in Proposition  \ref{almost-falt-bundle}, where $l=l(l_j)\le N=N(V(x;\delta_1),\epsilon) $ and $l_j=\frac{1}{r_j^2}$.  Set  $\tau_j=\psi_j(\beta e)$,  where $e$ is  a  unit basis of
$L_0$. Then  $\{\tau_j\}$ is a  sequence of smooth sections of $K_{\mathcal{R}}^{-l_jl}$  supported in $\psi_j(V(x;\delta_1))$.   Moreover,  $\tau_j$ satisfies
the following property as long as $j$ is large enough:
\begin{align}\label{l-norm}
&i)~ \|\tau_j\|^2_{C^0(\phi_i(V(x;\delta)\bigcap B_x(3\delta)))}\geq \frac{3}{4}e^{-3\delta^2}\geq \frac{1}{2};\notag\\
&ii)~\int_{M_\infty}|\tau_j|^2{\rm dv}_{g_\infty}\leq \frac{3}{2}\frac{r_j^{2n}}{l^n}(2\pi)^n;\notag\\
&iii)~\bar{\partial}_{J\infty}\tau_j\leq\frac{\eta}{8},~{\rm in}~V(x;\delta);\notag\\
&iv)~  \int_{M_\infty}|\bar{\partial}_{J_\infty}\tau_j|^2 {\rm dv}_{g_\infty}\leq \frac{3}{2}r_j^{2n-2}\frac{\eta}{l^{n-1}}.
\end{align}

On the other hand,  from the proof of Lemma \ref{cone-regular-part}, we see that
there exists  $t_0$, which depends  on $V(x;\delta_1)$  such that for any sufficiently large $j$ it holds
$${\rm vol}(B_y( \sqrt{t_0}\frac{r_j}{\sqrt{l}})\ge (1-\epsilon){\rm vol}(B_0( \sqrt{t_0}\frac{r_j}{\sqrt{l}}),~\forall ~y\in  K_j,$$
where $\epsilon$ is a small  constant chosen as in Lemma \ref{regular-part}.  Then by the pseudo-locality theorem in \cite{TW},   there exist a $t_0'\le t_0$,  and
  a sequence  of sets $B_{i}\subseteq  M_{i}$  and a sequence of  diffeomorphisms $\varphi_i:  {K}_j\rightarrow B_{i}$ such that
\begin{align}
\varphi_{i}^*g^{i} (t_0'\frac{r_j^2}{l})\rightarrow g_\infty,\notag\\
\varphi_i^* J_i\rightarrow J_\infty,\notag\\
\varphi_{i}^*K_{M_{i}}^{-1}\rightarrow K_{\mathcal{CR}}^{-1},\notag
\end{align}
in $C^\infty$-topology, where $g^i(t)=g^i_t$.    Thus,   if  we let $v_i=(\varphi_{i})_*\tau_{j_0}\in \Gamma(M_i,$
$\newline K_{M_i}^{-ll_{j_0}})$ for some  large  integer   $l_{j_0}=\frac{1}{r_{j_0}^2}$ and $l=l(l_{j_0})\le N$,    then there exists a large integer  $i_0$  such that
for any $i\ge i_0$ it holds:
\begin{align}\label{l-norm-2}
&i')~ |v_i|_{h^{i}_{t_x}}\geq \frac{3}{8}, ~{\rm in}~ (\varphi_{i}\circ\psi_{j_0} )(V(x;2\delta) \bigcap B_x(3\delta))); \notag\\
&ii') \int_{M_{i}}|v_i|_{h^{i}_{t_x}}^2  {\rm dv}_{g_{t_x}^{i}}\leq \frac{5}{2}r_{j_0}^{2n-2}\frac{\eta}{l^{n-1}};\notag\\
&iii')~|\bar{\partial}_{J_{i}} v_i|_{h^{i}_{t_x}} \le\frac{1}{4}\eta ,  ~{\rm in}~  (\varphi_{i}\circ\psi_{j_0} )(V(x;\delta))  ;\notag\\
&iv')~  \int_{M_{i}}|\bar{\partial}_{J_{i}} v_i|^2_{h^{i}_{t_x}} {\rm dv}_{g_{t_x}^{i}}  \leq \frac{5}{4}r_{j_0}^{2n-2}\frac{\eta}{l^{n-1}}.
\end{align}
Here  $t_x=t_0' r_{j_0}^2/l$ and $h^{i_j}_{t_x}$  are  hermitian metrics of $K_{M_{i}}^{-ll_{j_0}}$  induced by
$g^{i}_{t_x}$.

 By solving   $\bar{\partial}$-equations for  $K_{M_{i}}^{-ll_{j_0}}$-valued (0,1)-form $\sigma_i$,
\begin{align}
\bar{\partial}\sigma_i=\bar{\partial}v_i,   ~{\rm in}~ M_{i}, \notag
\end{align}
we  get  the $L^2$-estimates   from   (\ref{L^2}) and $ iv')$  in (\ref{l-norm-2}),
\begin{align}\label{norm}
\|\sigma_i\|^2_{L^2(M_{i},g^{i}_{t_x})}\leq \frac{4}{ll_{j_0}}\int_{M_{i}}|\bar{\partial}_{J_{i}}v_i|^2 {\rm dv}_{g_{t_x}^{i}} \leq \frac{5\eta}{l^n  l_{j_0}^n}.
\end{align}
Hence,   by  (\ref{elliptic}) and  $ iii')$  in (\ref{l-norm-2}),  we derive
\begin{align}\label{small-c0-section}
&|\sigma_i |_{h^{i}_{t_x}}(q) \notag\\
&\leq 2C_2(\sup_{(\varphi_{i}\circ \psi_{j_0})(V(x;\delta))}|\bar{\partial}v_i|_{h^{i}_{t_x}}\notag\\
&+\delta^{-n} [(ll_{j_0})^{n}\int_{(\varphi_{i}\circ \psi_{j_0})(V(x;\delta))}
|\sigma_i|^2_{h^{i}_{t_x}}{\rm dv}_{g^{i}_{t_x}}]^\frac{1}{2})\notag\\
&\leq  2C_2 (\frac{1}{4}\eta+ \delta^{-n}[(ll_{j_0})^{n}\int_{M_{i}}|\sigma_i|^2{\rm dv}_{g^{i}_{t_x}}]^\frac{1}{2})\notag\\
&\leq 2C_2 ( \frac{1}{4}\eta+ \delta^{-n}[(ll_{j_0})^{n} \frac{5\eta}{l^n l_{j_0}}r_{j_0}^{2n-2}]^\frac{1}{2})\notag \\
&\le  5C_2( \frac{1}{4}\eta+ \delta^{-n}\sqrt{\eta})\leq \frac{1}{8}, ~ \forall~ q\in (\varphi_{i}\circ \psi_{j_0})(V(x;2\delta)).
\end{align}

Let $s_i=v_i-\sigma_i$.   Then  $s_i$  is a holomorphic section of $K_{M_{i}}^{-ll_{j_0}}$.   By $ i')$  in  (\ref{l-norm-2}) and (\ref{small-c0-section}),  we have
$$|s_i|_{h^{i}_{t_x}}(q_1)\geq \frac{3}{8}-\frac{1}{8}=\frac{1}{4} , ~\forall ~ q_1\in (\varphi_{i}\circ \psi_{j_0})(V(x;2\delta)\bigcap B_x(3\delta)).$$
Moreover,
 by   $ ii')$  in (\ref{l-norm-2}),  it is easy to see that
\begin{align}\label{l2-section-order}
\int_{M_{i}}|s_i|_{h_{t_x}^{i}}^2{\rm dv}_{g^{i}_{t_x}}&\leq 2(\int_{M_i}|v_i|_{h_{t_x}^{i}}^2{\rm dv}_{g^{i}_{t_x}}+\int_{M_{i}}
 |\sigma_i|_{h_{t_x}^{i}}^2{\rm dv}_{g^{i}_{t_x}})\notag\\
&\leq 4(2\pi)^n\frac{r_{j_0}^{2n}}{l^n}.
\end{align}
Thus by  the estimate (\ref{uniform-gradient}), we get
\begin{align}
\|\nabla s_i\|_{h^{i}_{t_x}}\leq \sqrt{4(2\pi)^n}C_1 \sqrt{l}r_{j_0}^{-1}.\notag
\end{align}
Since $d(p_{i},q_1)\leq 4\frac{r_{j_0}}{\sqrt{l}}\delta$,
we  deduce
\begin{align}
&|s_i(p_{i})|_{h^{i}_{t_x}}\geq |s_i(q_1)|-4\frac{r_{j_0}}{\sqrt{l}}\delta \|\nabla s_i\|_{h^{i}_{t_x}} \notag\\
&\geq |s_i|_{h^{i}_{t_x}}(q_1)-8\sqrt{(2\pi)^n}C_1\delta\geq \frac{1}{8}.\notag
\end{align}
This  proves  the theorem  while $l_x$ is chosen by $ll_{j_0}$.

\end{proof}

\vskip3mm

\section{Proof of Theorem \ref{main-theorem-wang-zhu}--I}

  In this section,  we  use the estimate in Section 5 to give a lower bound of  $\rho_{ll_0}(x)$ for a  sequence of almost K\"{a}hler-Einstein  manifolds.

\begin{theo}\label{almost-ke-1}Let $(M_i, g^i)$ be  a sequence of almost K\"ahler-Einstein manifolds as in Section 3 and $ (M_\infty, g_\infty)$ be its
  Gromov-Hasusdorff limit.  Then there exists  an integer $l_0>0$, which  depends  only  on   $ (M_\infty, g_\infty)$    such that for any integer  $l>0$ there exists  a uniform constant $c_l>0$ with property:
\begin{align}\label{partial estimate}
\rho_{ll_0}(M_{i},g^{i})\geq c_l.
\end{align}
\end{theo}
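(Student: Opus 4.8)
The plan is to deduce Theorem~\ref{almost-ke-1} from the pointwise construction in Proposition~\ref{partial} by a compactness-covering argument on the Gromov-Hausdorff limit $(M_\infty,g_\infty)$. The subtlety is that Proposition~\ref{partial} produces, for each limit point $x$, an integer $l_x$ and a time $t_x$ which \emph{a priori} depend on $x$; to get a single $l_0$ working uniformly one must compactify. First I would fix, for each $x\in M_\infty$, the pair $(l_x,t_x)$ and the radius $3\delta_x r_{j_0(x)}/\sqrt{l(x)}$ appearing in the estimate $|s_i|_{h^i_{t_x}}(p_i)\ge \tfrac18$ whenever $p_i\to x$; by continuity of the construction (the section $s_i$ has a definite gradient bound from \eqref{uniform-gradient}) the lower bound $|s_i|_{h^i_{t_x}}\ge c$ persists not just at $p_i$ but on a small metric ball $B(p_i,\rho_x)$ for all $i$ large. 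So to each $x$ we attach an open set $U_x\ni x$ in $M_\infty$, an integer $l_x$, and a threshold. Since $M_\infty$ is compact (diameter bounded by $D$, and it is a length space), finitely many $U_{x_1},\dots,U_{x_m}$ cover it; let $l_0$ be a common multiple of $l_{x_1},\dots,l_{x_m}$ (this is the $l_0$ depending only on $(M_\infty,g_\infty)$), and likewise choose $i_0$ larger than all the finitely many $i_0(x_k)$.

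Next I would pass from the single line bundle $K_{M_i}^{-l_{x_k}}$ to $K_{M_i}^{-l_0}$ and then to $K_{M_i}^{-ll_0}$ for arbitrary $l>0$. Given a point $q\in M_i$ with $i\ge i_0$, its image in $M_\infty$ (under the Gromov-Hausdorff approximation) lies in some $U_{x_k}$, so Proposition~\ref{partial} gives a holomorphic $s\in\Gamma(K_{M_i}^{-l_{x_k}},h^i_{t_{x_k}})$ with $\int|s|^2\le 1$ and $|s|(q)\ge c$. Raising to the power $l_0/l_{x_k}$ and then tensoring with $l$ copies, the resulting section $s^{\otimes(ll_0/l_{x_k})}$ of $K_{M_i}^{-ll_0}$ is holomorphic with controlled $L^2$-norm and pointwise value bounded below at $q$; after $L^2$-renormalization this shows $\rho_{ll_0}(q,g^i_{t_{x_k}})\ge c_l$. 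The one genuine nuisance here is that $\rho_{ll_0}$ in \eqref{partial estimate} is computed with respect to $g^i$, not $g^i_{t_{x_k}}$; to fix this I would invoke Proposition~\ref{estimate-u-R} (the scalar-curvature and Kähler-potential estimates along the Ricci flow) to compare the two metrics $g^i$ and $g^i_{t_{x_k}}$ — in particular the volume form and the Hermitian metric $h^i$ versus $h^i_{t_{x_k}}$ differ by a factor bounded above and below in terms of $\mathrm{osc}\,u$, which by \eqref{c0-u}-type estimates is controlled once $t_{x_k}$ and $i$ are fixed. This changes $c_l$ by a harmless multiplicative constant depending on the finitely many $t_{x_k}$.

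The main obstacle I anticipate is \emph{not} the covering bookkeeping but making sure the finite cover genuinely exists with uniform constants: Proposition~\ref{partial} is stated for a fixed $x$ and a sequence $p_i\to x$, so one must check that the thresholds $i_0(x)$ and the ball radii $\rho_x$ can be chosen locally uniformly in $x$, i.e.\ that the construction is "open" in $x$. This requires knowing that the tangent-cone data ($C_x$, the cut-off $\beta$, the almost-flat bundle isomorphisms $\phi_j,\psi_j$ of Proposition~\ref{almost-falt-bundle}) vary controllably, which in turn rests on the volume-convergence and $\epsilon$-regularity of Lemma~\ref{regular-part}; combined with compactness of $M_\infty$ this should suffice, but it is the step where one must be careful rather than formal. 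A secondary point is that one should really first establish the lower bound for $\rho_{l_0}$ and then remark that the bound for $\rho_{ll_0}$ follows by the tensor-power trick together with the uniform gradient estimate \eqref{uniform-gradient}, which is what lets the pointwise peak at one point spread to a neighborhood and be re-centered at an arbitrary $q$. Once these are in place, \eqref{partial estimate} follows by taking $c_l$ to be the minimum over the finite cover of the constants produced at each $x_k$.
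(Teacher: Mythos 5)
Your overall architecture coincides with the paper's: Proposition~\ref{partial} at each $x\in M_\infty$, propagation of the peak to a definite ball via the gradient estimate \eqref{section-estiamte-1}, a finite cover of the compact limit $M_\infty$, $l_0$ a common multiple (the paper takes $l_0=\prod_\alpha l_{x_\alpha}$), and the tensor-power trick for $ll_0$. However, there is a genuine gap in the step you yourself identify as a ``nuisance'': converting the peak section from the time-$t_x$ metric back to the initial metric $g^i$. You propose that the Hermitian metric and the volume form of $g^i_{t_x}$ versus $g^i$ ``differ by a factor bounded above and below in terms of $\mathrm{osc}\,u$,'' citing Proposition~\ref{estimate-u-R} and \eqref{c0-u}. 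Two things are wrong here. First, \eqref{c0-u} controls the Ricci potential $u_t$ of $g_t$, not the K\"ahler potential $\phi$ with $\omega_{g_t}=\omega_g+\sqrt{-1}\partial\bar\partial\phi$; it is $\phi$ that enters the comparison $|\cdot|_{h_t}=e^{-l\phi}|\cdot|_h$, and bounding $\phi$ requires a separate argument (the paper runs the maximum principle and a Green's function estimate on the flow equation \eqref{potential-equation}). Second, and more seriously, even a two-sided bound on $\phi$ does \emph{not} give a pointwise lower bound on the volume-form ratio $\omega_{g_t}^n/\omega_g^n=\det(g_{i\bar j}+\phi_{i\bar j})/\det(g_{i\bar j})$: along the flow one only has $\partial_t\log(\omega_{g_t}^n/\omega_g^n)=-R(g_t)+n$, and the available upper bound $R(g_t)\le C/t^{n+1}$ is not integrable at $t=0$, so the ratio is bounded \emph{above} (by $e^{\lambda t}$, since $R\ge -n\Lambda$) but not below. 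Since you need precisely $\int_M|s|^2_{h_t}\,{\rm dv}_{g_t}\ge c\int_M|s|^2_{h}\,{\rm dv}_{g}$ in order to renormalize without destroying the peak value, the pointwise comparison you invoke is unavailable.

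The paper closes this gap with Lemma~\ref{comparison} (Claim~\ref{claim-6}): it replaces the pointwise volume comparison by the measure-theoretic inequality ${\rm vol}_{g_t}(\Omega)\ge{\rm vol}_g(\Omega)-2V\lambda t$ (a consequence of the one-sided bound \eqref{upper-bound-volume} applied to the complement) and a layer-cake decomposition of $\int|s|^2$ over superlevel sets, together with the $C^0$ bound $|s|^2_h\le H$ from \eqref{l^2-estimate-section}; this yields \eqref{begerman-5} for $t\le t_0(l,\Lambda,D)$. To repair your argument you would need to import this lemma (or an equivalent substitute) rather than a pointwise metric comparison; the rest of your covering and tensor-power bookkeeping then goes through as in the paper.
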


The proof of Theorem \ref{almost-ke-1} depends on the following lemma.

\begin{lem}\label{comparison}
Let  $(M, g)$ be a Fano manifold with $\omega_g\in 2\pi c_1(M)$ which  satisfies
\begin{align}\label{condition-fano}
{\rm Ric}(g)\geq -\Lambda^2 g~{\rm and}~  {\rm diam}(M,g)\leq D.
\end{align}
Let $g_t$  be    a solution of (\ref{Ricci flow}) with the initial metric $g$.   Then there exists  a small $t_0=t_0(l, \Lambda, D)$ such that the following is true:
if  $s\in \Gamma(M, K_M^{-l})$ is  a holomorphic section with
$\int_M|s|^2_{h_t}{\rm dv}_{g_t}=1$ for some $t\le t_0$ which satisfies
$$|s|^2_{h_t} (p)\geq c>0,$$
then
\begin{align}\label{begerman-3}
|s|^2_{h} (p)\geq c'>0~{\rm and}~
\int_M|s|^2_{h}{\rm dv}_{g}\leq c''.
\end{align}
Here    ${h_t}$ and $h$ are    hermitian metrics  of $K_{M_i}^{-l}$  induced by
$g_t$ and $g$, respectively,   and $c',c''>0$  are  uniform constants  depending only on $c,l, \Lambda$ and  $ D$.
\end{lem}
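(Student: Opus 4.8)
The plan is to compare the metrics $g_t$ and $g=g_0$ along the Ricci flow on a fixed time interval $[0,t_0]$, controlling three quantities: the pointwise norm of $s$, the volume form, and the total $L^2$-norm. The starting point is that along the normalized K\"ahler-Ricci flow \eqref{Ricci flow} the evolution of the Hermitian metric on $K_M^{-l}$ is governed by the Ricci potential: since $\frac{\partial}{\partial t}\omega_{g_t}=-\mathrm{Ric}(\omega_{g_t})+\omega_{g_t}=\sqrt{-1}\partial\bar\partial u_t$, the induced metrics $h_t$ on $K_M^{-l}$ satisfy $h_t=h_0\,e^{-l\varphi_t}$ for a potential $\varphi_t$ with $\dot\varphi_t=u_t$ (up to an additive constant, which is harmless after normalizing), and the volume form evolves by $\mathrm{dv}_{g_t}=e^{?}\mathrm{dv}_{g_0}$ with a similar potential-type exponent. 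Hence the ratios $|s|^2_{h_t}/|s|^2_{h_0}$ and $\mathrm{dv}_{g_t}/\mathrm{dv}_{g_0}$ are both of the form $e^{O(\int_0^{t}\|u_\tau\|_{C^0}\,d\tau)}$ pointwise.

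The key input is then Proposition \ref{estimate-u-R}, which controls $\mathrm{osc}_M u_t$ via \eqref{c0-u}: $\mathrm{osc}_M u_t\le C t^{-(n+1)(n+\frac12)}$. This blows up as $t\to 0$, so one cannot integrate it naively down to $0$. The fix is standard: one does not need $t$ to be small — one needs $t$ to lie in a fixed interval $[t/2,t]$ or rather one compares $h_t$ with $h_0$ by integrating $\dot\varphi_\tau=u_\tau$ over $\tau\in(0,t)$, and here I would instead use that $\int_0^1\int_M|R_\tau-n|\,\mathrm{dv}_{g_\tau}\,d\tau$ is finite (indeed the whole point in the application will be that it is small) together with \eqref{gradient-estimate-2}–\eqref{scalar-curvature} to bound $\|u_\tau\|_{C^0}$ by something integrable near $\tau=0$. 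More directly, since $g$ satisfies \eqref{condition-fano}, one has a uniform two-sided bound $c(t_0)^{-1}g\le g_t\le c(t_0)\,g$ on $(0,t_0]$ for the \emph{pull-back comparison} coming from the pseudolocality/short-time estimates of the flow (as in \cite{TW}, \cite{WZ2}), which immediately yields $c(t_0)^{-1}|s|^2_h\le |s|^2_{h_t}\le c(t_0)|s|^2_h$ and $c(t_0)^{-n}\mathrm{dv}_g\le\mathrm{dv}_{g_t}\le c(t_0)^n\mathrm{dv}_g$. Choosing $t_0=t_0(l,\Lambda,D)$ so that $c(t_0)$ is a fixed constant depending only on $\Lambda, D$, both inequalities in \eqref{begerman-3} follow at once: $|s|^2_h(p)\ge c(t_0)^{-1}|s|^2_{h_t}(p)\ge c(t_0)^{-1}c=:c'$, and $\int_M|s|^2_h\,\mathrm{dv}_g\le c(t_0)^{1+n}\int_M|s|^2_{h_t}\,\mathrm{dv}_{g_t}=c(t_0)^{1+n}=:c''$.

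The main obstacle is making the constant $c(t_0)$ depend only on $(l,\Lambda,D)$ and not on finer geometry of $g$. This is exactly where Proposition \ref{estimate-u-R} is essential: the oscillation estimate \eqref{c0-u} and the scalar curvature estimate \eqref{scalar-curvature} are stated with constants depending only on $n,\Lambda,D$, so one should phrase the comparison through $u_\tau$ and integrate $\|u_\tau\|_{C^0}$ — the divergence $t^{-(n+1)(n+\frac12)}$ at $\tau=0$ is integrable only after one more observation, namely that near $\tau=0$ the flow $g_\tau$ is close to $g$ in $C^0$ by short-time estimates, so the comparison constant degenerates to $1$ as $t_0\to 0$ rather than blowing up. I would therefore structure the argument as: (i) record $h_t=h_0 e^{-l\varphi_t}$, $\dot\varphi_t=u_t$; (ii) bound $|\varphi_t-\varphi_0|\le \int_0^t\mathrm{osc}_M u_\tau\,d\tau+|\text{const}|$ and show this is $\le\epsilon(t_0)\to 0$ using \eqref{c0-u} together with the short-time behavior; (iii) deduce the two-sided bounds on $|s|^2$ and $\mathrm{dv}$; (iv) conclude \eqref{begerman-3} with $c'=e^{-l\epsilon(t_0)}c$ and $c''=e^{(l+n)\epsilon(t_0)}$. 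The only genuinely delicate point is step (ii), controlling the potential uniformly down to $\tau=0$; everything else is bookkeeping.
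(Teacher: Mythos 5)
Your overall strategy (compare $h_t$ with $h$ and $\mathrm{dv}_{g_t}$ with $\mathrm{dv}_g$ on $[0,t_0]$) is the right one, but the central step you rely on is not available, and this is a genuine gap. You invoke a uniform two-sided Riemannian metric comparison $c(t_0)^{-1}g\le g_t\le c(t_0)\,g$ with $c(t_0)=c(l,\Lambda,D)$, attributed to pseudolocality/short-time estimates. Under the hypotheses \eqref{condition-fano} — only a Ricci lower bound and a diameter bound — no such global equivalence holds with constants depending only on $\Lambda,D$: pseudolocality gives curvature control only near points with almost-Euclidean volume ratio, and in the intended application the initial metrics $g^i$ degenerate while $g^i_t$ for fixed $t>0$ become smooth, which is exactly the opposite of uniform equivalence. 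Your fallback route, integrating $\dot\varphi_\tau=u_\tau$ and bounding $\int_0^{t}\mathrm{osc}_M u_\tau\,d\tau$ via \eqref{c0-u}, also fails: $\tau^{-(n+1)(n+\frac12)}$ is not integrable at $\tau=0$, and repairing this with ``short-time closeness of $g_\tau$ to $g$'' would smuggle in dependence on higher derivatives of $g$, destroying the required uniformity in $(c,l,\Lambda,D)$. Finally, even granting a uniform bound on the potential, the volume forms are \emph{not} pointwise two-sided comparable: the preserved scalar curvature lower bound gives only $\mathrm{dv}_{g_t}\le e^{\lambda t}\mathrm{dv}_g$, with no pointwise lower bound, so the $L^2$ comparison cannot be done by a pointwise inequality as in your step (iii).

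The paper's proof avoids both difficulties. Writing $\omega_{g_t}=\omega_g+\sqrt{-1}\partial\bar\partial\phi$, it bounds $\phi$ two-sidedly using only the maximum principle applied to the Monge--Amp\`ere flow \eqref{potential-equation} together with Green's function estimates (which need only $\mathrm{Ric}\ge-\Lambda^2 g$ and $\mathrm{diam}\le D$, via the one-sided bound $f_g\le C(\Lambda,D)$ and the bound on $\int_M\phi\,\mathrm{dv}_g$). This yields $e^{-C'l}|\cdot|_h\le|\cdot|_{h_t}\le e^{Cl}|\cdot|_h$ and hence the pointwise conclusion directly, with no comparison of the Riemannian metrics themselves. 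For the integral bound it does \emph{not} compare volume forms pointwise; instead it uses the layer-cake (distribution function) identity together with the one-sided set-wise volume estimate $\mathrm{vol}_{g_t}(\Omega)\ge\mathrm{vol}_g(\Omega)-2V\lambda t$ and the sup bound $|s|^2_h\le H(\Lambda,D)$ from Moser iteration, then chooses $t_0\le(4\lambda VHe^{C'l})^{-1}$. You would need to replace your steps (ii)--(iv) by an argument of this kind; as written, the proposal does not close.
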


\begin{proof}
Let $\omega_{g_t}=\omega_g +\sqrt{-1}\partial\bar{\partial}\phi$. Namely,  $\phi$ are potentials of $g_t$. Then $\phi=\phi(x,t)$ satisfies
\begin{align}\label{potential-equation}
\frac{\partial}{\partial t}\phi=\log\frac{(\omega_g+\sqrt{-1}\partial\bar{\partial}\phi)^n}{\omega_g^n}+\phi-f_g,
\end{align}
where $f_g$ is  the  Ricci potential of $g$ normalized by
$$\int_M f_g{ \rm dv}_g^n=0.$$
  Since
  $$\Delta f_g=\rm{R}(g)-n\ge -(n-1)\Lambda^2-n, $$
  by using  the Green formula,   we see
  $$f_g(x)\le -\int_M G(x,\cdot) \Delta f_g\le C(\Lambda, D).$$
    Thus applying the maximum  principle to (\ref{potential-equation}),  it follows
 $$\phi\ge  - C(\Lambda, D).$$
 On the other hand, integrating  both sides of  (\ref{potential-equation}),  we have
\begin{align}
\frac{d}{dt}\int_M\phi {\rm dv}_{g}&=\int_M\log\frac{(\omega_g+\sqrt{-1}\partial\bar{\partial}\phi)^n}{\omega_g^n}{\rm dv}_{g}+\int_M\phi {\rm dv}_{g} -\int_M f_g {\rm dv}_{g}&\notag \\
&\leq \int_M\phi  {\rm dv}_{g}+ C,\notag
\end{align}
It follows
$$ \int_M\phi{\rm dv}_{g}\le Ce^{t}\le eC.$$
 Hence by using  the Green formula to $\phi$, we can also get
 $$\phi\le  C'(\lambda, D).$$
  As a consequence, we derive
  \begin{align}\label{hermitian-equivalent}  e^{-C'l}  |\cdot|_h\le |\cdot|_{h_t}=e^{-l\phi}|\cdot|_ h\le  e^{Cl}|\cdot|_h.
  \end{align}
    Therefore to prove Proposition \ref{comparison}, we suffice to prove

    \begin{claim}\label{claim-6}
Let $s\in \Gamma(M, K_M^{-l})$ be a holomorphic section.  Suppose that
\begin{align}
\int_M|s|^2_{h}{\rm{dv}}_g=1.\notag
\end{align}
Then
\begin{align}\label{begerman-5}
\int_M|s|^2_{h_t}{\rm{dv}}_{g_t}\geq c(l,\Lambda, D)>0.
\end{align}
\end{claim}

Since
\begin{align}\frac{\partial}{\partial t} \frac{(\omega_g+\sqrt{-1}\partial\bar{\partial}\phi)^n}{\omega_g^n}&=\Delta'\frac{\partial \phi}{\partial t}\notag\\
&=-R(g_t)+n\le \lambda=\lambda(\Lambda),\notag
\end{align}
 \begin{align}\label{upper-bound-volume}
\text{vol }_{g_t}(\Omega)\leq e^{\lambda t} \text{vol }_g(\Omega), ~\forall~\Omega\subset M.
\end{align}
It follows
\begin{align}\label{lower-bound-volume}\text{vol }_{g_t}(\Omega)&=V-\text{vol }_{g_t}(M\setminus \Omega)
\ge V- e^{\lambda t} \text{vol }_g(M\setminus\Omega)\notag\\
&\ge  \text{vol }_g(\Omega)-2V\lambda t.\end{align}
By  the estimate (\ref{l^2-estimate-section}), we see
$$|s(x)|_{h}^2\le H=H(\Lambda,D).$$
 Then
\begin{align}
 \int_0^H {\rm vol}_g \{x\in M|~|s(x)|_{h}^2\geq s\}ds=\int_M |s|^2_{h}{\rm dv}_g. \notag
\end{align}
Hence,  by using (\ref{hermitian-equivalent}),  and (\ref{upper-bound-volume}) and  (\ref{lower-bound-volume}), we get
\begin{align}
\nonumber \int_M|s|^2_{h_t}{ \rm dv}_{g_t}\ge & \int_0^H  {\rm vol}_{g_t} \{x\in M|~|s(x)|_{h_t}^2\geq s\}ds\\
\nonumber &\geq \int_0^H  {\rm vol}_{g_t} \{x\in M|~ |s(x)|_{h}^2\geq e^{C'l} s\}ds &\\
\nonumber &\geq e^{-C'l}\int_0^{e^{C'l}H}  [{\rm vol}_{g} \{x\in M|~|s(x)|_{h}^2\geq s\}-2V\lambda t]ds\\
\nonumber &\geq e^{-C'l} (1-2\lambda VHe^{C'l}t).
\end{align}
Therefore, by choosing  $t_0\le (4\lambda VHe^{C'l})^{-1}$,  we  derive  (\ref{begerman-5}). The claim is proved.

\end{proof}

\begin{proof}[Proof of the Theorem \ref {almost-ke-1}]  By Proposition \ref{partial},  we see that for any
$x\in M_\infty$ and a sequence $\{p_i\subset M_i\}$ which converges  to $x$, there exist  two large number $l_x$ and $i_0$,  a small time  $ t_x$  such that there exists  a holomorphic section  $s_i\in \Gamma (K_{M_i}^{-l_x}, h^i_{t_x})$  for any $i\ge i_0$  with  $\int_{M_i}|s_i|_{h^i_{t_x}}^2\rm{dv}_{g^i}\le 1$
which satisfies
\begin{align}
 |s_i|_{h_{t_x}^i}(p_i) \geq  \frac{1}{8}, \notag
 \end{align}
where $h^i_{t_x}$ is the hermitian metric of $K_{M_i}^{-l_x}$  induced by
$g^i_{t_x}$.
 By    Lemma \ref{comparison},    it follows that there exists a constant  $ c(l_x,\Lambda, D)$  and
  a holomorphic section  $\hat s_i\in \Gamma (K_{M_i}^{-l_x}, h_i)$  for any $i\ge i_0$  with  $\int_{M_i}|\hat s_i|_{h_i}^2\rm{dv}_{g^i}=1$
which satisfies
\begin{align}
 |\hat s_i|_{h_i}(p_i)\geq  c_x=c(l_x,\Lambda, D), \notag\end{align}
where $h_i$ is the hermitian metric of $K_{M_i}^{-l_x}$  induced by
$g^i$.

 Let    $C=C(C_S,n)$ be the constant as in (\ref{section-estiamte-1}),   which   depending only on $\Lambda$ and $ D$.
 For each $x$,  we choose $r_x=\frac{c_x}{2} l_x^{-\frac{n+1}{2}}C.$
 Then by the estimate  (\ref{section-estiamte-1}), we get
 \begin{align}
 |\hat s_i|_{h_i}(q)\geq  \frac{c_x}{2},~\forall ~q\in B_{p^i}(r_x). \notag\end{align}
Take $N$  balls  $B_{x_\alpha}(\frac{r_{x_\alpha}}{2})$ to cover $M_\infty$.  Then   it is easy to see that  there exists $i_1\ge i_0$ such that  $\cup_\alpha B_{p_{\alpha}^i}(r_{x_\alpha})=M_i$  for any $i\ge i_1$, where $\{ p_{\alpha}^i\}$  is a set of $N$ points in $M_i$.  This shows that for any $q\in M_i$ $(i\ge i_1)$ there exist a ball
$B_{p_{\alpha}^i}(r_{x_\alpha})$ and  a holomorphic section  $s^i_{\alpha}\in \Gamma (K_{M_i}^{-l_{x_\alpha}}, h_i)$ such that
 $q\in B_{p_{\alpha}}(r_{x_\alpha}^i)$, and
 $\int_{M_i}|s^i_{\alpha}|_{h_i}^2\rm{dv}_{g^i}=1$ and
 \begin{align}\label{begerman-4}
  | s_\alpha^i|_{h_i}(q)\geq  c=\min_{\alpha} \{c_{x_\alpha}\}>0.\end{align}
 Set $l_0=\prod_\alpha l_{x_\alpha}$.  Then  by using  a standard method (cf. \cite{DS},  \cite{T5}),
     for any  $q\in M_i$ $(i\ge i_1)$,  one can construct  another   holomorphic section
       $s\in \Gamma (K_{M_i}^{-l_0}, h_i)$  based on  holomorphic sections $s_\alpha^i$
   such that  $\int_{M^i}|s|_{h_i}^2\rm{dv}_{g^i}=1$ and
 \begin{align}
  | s|_{h_i}(q)\geq  c'>0,\notag
  \end{align}
where $c'=c'(l_0, c)$.
This  proves  the theorem  for $l=1$.   One can also prove the theorem for general multiple  $l\ge 1$ as above.
\end{proof}

\section{Proof of Theorem \ref{main-theorem-wang-zhu}--II }

In this section,  we prove Theorem \ref{main-theorem-wang-zhu} in case of almost K\"ahler-Ricci solitons.
We assume that   a Fano manifold $(M,g)$ admits  a non-trivial holomorphic vector field $X$,   where  $X$ lies in
an reductive  Lie subalgebra $\eta_r$ of space of holomorphic vector fields,  and   $g$ is $K_X$-invariant  with $\omega_g\in 2\pi c_1(M)$ \cite{TZ}.
We also suppose  that  $g$ satisfies  the following  geometric conditions:
\begin{align}\label{condition-ks}
&i) ~{\rm Ric} (g)+L_X g\geq -\Lambda^2g, ~ |X|_g\leq A ~\text{and}~\text{diam }(M,g)\leq D;\notag \\
&ii)~R(g)\geq -C_0.
\end{align}
In particular, under the condition i),  $g$ has a uniform $L^2$-Sobolev constant $C_s=C_s(\Lambda, A, D)$ (cf. \cite{WZ2}).
We note that the volume of $(M,g)$ is uniformly bounded below by the normalized condition $\omega_g\in 2\pi c_1(M)$ and
it is uniformly bounded above by the volume comparison theorem \cite{WW}.

 Now we consider  the  following modified K\"ahler-Ricci flow with  the above initial  K\"ahler metric $g$,
\begin{align}\left\{\begin{aligned}\label{modified-Ricci-flow}
&\frac{\partial}{\partial t}g=-{\rm{ Ric }}(g)+g+L_Xg,\\
&g_0=g(\cdot,0)=g.
 \end{aligned} \right.
\end{align}
Clearly,  solutions $g_t$  $(t\in (0,\infty))$ of (\ref{modified-Ricci-flow}) are  all $K_X$-invariant.

Since  the Sobolev constant $g$  is  uniformly bounded below,  by Zhang's result \cite{Zh},    we have an analogy to Lemma \ref{Sobolev} as follows.

   \begin{lem}\label{modified-Sobolev}
All solutions $g_t$  of (\ref{modified-Ricci-flow}) have  Sobolev constants $C_s=C_s(\Lambda, A,$
\newline $ D)$ uniformly bounded below.
Namely,  the following inequalities hold,
\begin{align}
(\int_M f^{\frac{2n}{n-1}}d{\rm{v}}_{g_t})^\frac{n-1}{n}\leq C_s(\int_M f^2(R+\hat C_0)d{\rm{v}}_{g_t}+\int_M|\nabla f|^2d{\rm{v}}_{g_t}),
\notag
\end{align}
where $f\in C^1(M)$ and $\hat C_0 $ is a uniform constant depending only on  the lower bound  $C_0$ of scalar curvature $R$ of $g$. \end{lem}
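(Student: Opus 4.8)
The plan is to remove the extra Lie-derivative term from (\ref{modified-Ricci-flow}) by a time-dependent change of coordinates and then invoke Lemma \ref{Sobolev}. Let $\{\sigma_t\}_{t\ge 0}$ be the one-parameter family of transformations of $M$ generated by $-X$ (viewed as a real vector field), i.e. $\frac{d}{dt}\sigma_t=-X\circ\sigma_t$ and $\sigma_0=\mathrm{id}$; since $M$ is compact this family is defined for all $t$. Setting $\hat g_t=\sigma_t^*g_t$, the standard rule for differentiating the pullback of a time-dependent tensor gives
\[
\frac{\partial}{\partial t}\hat g_t=\sigma_t^*\Big(\frac{\partial g_t}{\partial t}+L_{-X}g_t\Big)=\sigma_t^*\big(-\mathrm{Ric}(g_t)+g_t\big)=-\mathrm{Ric}(\hat g_t)+\hat g_t,
\]
with $\hat g_0=g$. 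Hence $\hat g_t$ is precisely the solution of the unmodified normalized K\"ahler-Ricci flow (\ref{Ricci flow}) with initial metric $g$, and Lemma \ref{Sobolev} applies to it directly.

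Next I would check that the hypotheses of Lemma \ref{Sobolev} are controlled purely by the initial datum $g$: under (\ref{condition-ks}) i) the metric $g$ has an $L^2$-Sobolev constant $C_s=C_s(\Lambda,A,D)$ (cf. \cite{WZ2}); (\ref{condition-ks}) ii) gives $\inf_M R(g)\ge -C_0$; and the total volume $V$ is pinched between two positive constants, from below by the normalization $\omega_g\in 2\pi c_1(M)$ and from above by the volume comparison theorem \cite{WW}. Consequently Lemma \ref{Sobolev} yields constants $\bA$ and $\hat C_0$, both depending only on $\Lambda,A,D$ and $C_0$, such that for every $t$ and every $f\in C^1(M)$,
\[
\Big(\int_M f^{\frac{2n}{n-1}}\,d{\rm v}_{\hat g_t}\Big)^{\frac{n-1}{n}}\le \bA\Big(\int_M\big(|\nabla f|^2+(R_{\hat g_t}+\hat C_0)f^2\big)\,d{\rm v}_{\hat g_t}\Big).
\]
It is worth emphasising that this bound involves no information about $X$ at positive times---only the initial Sobolev constant, the initial scalar-curvature lower bound, and the volume enter---which is exactly why the a priori uncontrolled behaviour of $|X|_{g_t}$ for $t>0$ causes no trouble.

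Finally I would transfer the inequality back to $g_t$. For an arbitrary $\tilde f\in C^1(M)$, apply the displayed inequality to $f=\tilde f\circ\sigma_t$ and change variables by the isometry $\sigma_t\colon(M,\hat g_t)\to(M,g_t)$: one has $\int_M(\tilde f\circ\sigma_t)^p\,d{\rm v}_{\hat g_t}=\int_M\tilde f^p\,d{\rm v}_{g_t}$, while $|\nabla_{\hat g_t}(\tilde f\circ\sigma_t)|^2=\big(|\nabla_{g_t}\tilde f|^2\big)\circ\sigma_t$ and $R_{\hat g_t}=R_{g_t}\circ\sigma_t$, so the change of variables produces exactly the asserted inequality for $g_t$ with $C_s=\bA$ and the same $\hat C_0$. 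There is no essential obstacle in this argument: its entire content is the reparametrisation identity together with the diffeomorphism-invariance of every ingredient of Zhang's Sobolev inequality. The only point demanding a little care is making explicit that the initial Sobolev constant of $g$ depends only on $(\Lambda,A,D)$, which is quoted from \cite{WZ2}.
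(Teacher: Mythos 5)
Your pullback-by-diffeomorphism argument is correct and is exactly the natural content behind the paper's terse invocation of Zhang's Sobolev estimate: conjugating by the flow $\sigma_t$ of $-X$ turns (\ref{modified-Ricci-flow}) into (\ref{Ricci flow}) with the same initial datum $g$, and all quantities in the inequality are diffeomorphism-natural, so Lemma \ref{Sobolev} transfers verbatim. The uniform initial Sobolev constant $C_s(\Lambda,A,D)$, the lower scalar-curvature bound $-C_0$, and the two-sided volume bound are precisely the controls on $g$ that the paper records just before Lemma \ref{modified-Sobolev}, so your hypotheses match the paper's and the conclusion follows as stated.
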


\begin{lem}\label{modified-iteration} Let $\Delta=\Delta_t$ be the Lapalace operator associated to $g_t$.  Suppose that $f\ge 0$ satisfies
\begin{align}\label{subsolution-2}(\frac{\partial}{\partial t}-(\Delta+X))f\leq a f, \end{align}
where  $a$ is a  constant.
Then  for any  $t\in (0,1) $, we have
 \begin{align}\label{c^0-estimate-soliton}
&\sup_{x\in M}f(x,t)\notag\\
&\leq \frac{C_1(\Lambda, A, D,C)}{t^\frac{n+1}{p}}(\int_{\frac{t}{2}}^t\int_M|f(x,\tau)|^pd\text{{\rm{v}}}_{g_\tau}d\tau)^\frac{1}{p}.
\end{align}
\end{lem}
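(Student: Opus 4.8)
The plan is to run the same Moser iteration that proved Lemma~\ref{iteration}, the only new ingredients being the drift term $X$ appearing in \eqref{subsolution-2} and the extra term produced by the $L_Xg$ piece of the modified flow \eqref{modified-Ricci-flow}; replacing $a$ by $\max(a,0)$ we may assume $a\ge 0$. First I would multiply \eqref{subsolution-2} by $f^p$ and integrate over $M$ against $\mathrm{dv}_{g_\tau}$, getting
\[
\int_M f^p f'_\tau\,\mathrm{dv}_{g_\tau}-\int_M f^p\Delta f\,\mathrm{dv}_{g_\tau}-\int_M f^p X(f)\,\mathrm{dv}_{g_\tau}\le a\int_M f^{p+1}\,\mathrm{dv}_{g_\tau}.
\]
Integration by parts turns $-\int_M f^p\Delta f$ into $\tfrac{4p}{(p+1)^2}\int_M|\nabla f^{(p+1)/2}|^2$ exactly as before. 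For the drift term I would write $f^pX(f)=\tfrac{1}{p+1}X(f^{p+1})$ and use $\nabla f^{p+1}=2f^{(p+1)/2}\nabla f^{(p+1)/2}$, so that Cauchy--Schwarz and Young's inequality, together with the uniform bound $|X|_{g_\tau}\le A$ available along \eqref{modified-Ricci-flow} under \eqref{condition-ks}, give
\[
\Big|\int_M f^pX(f)\,\mathrm{dv}_{g_\tau}\Big|\le\frac{2A}{p+1}\int_M f^{(p+1)/2}|\nabla f^{(p+1)/2}|\,\mathrm{dv}_{g_\tau}\le\frac{2p}{(p+1)^2}\int_M|\nabla f^{(p+1)/2}|^2\,\mathrm{dv}_{g_\tau}+\frac{A^2}{2}\int_M f^{p+1}\,\mathrm{dv}_{g_\tau}.
\]
Thus a definite fraction of the gradient term survives the absorption and the zero-order coefficient is enlarged only by the fixed constant $A^2/2$.

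Next I would add in the evolution of the volume form under \eqref{modified-Ricci-flow}, which besides the familiar factor $n-R$ produces a term $\int_M f^{p+1}\,\Delta_{g_\tau}u$, where $u$ is the potential of $X$ (so that $\nabla u=X$); after one integration by parts this is of exactly the same type as the drift term above and is absorbed by the same Young inequality, while the $-R$ part is kept on the left since $R+\hat C_0\ge 0$ along the flow (cf. Lemma~\ref{modified-Sobolev}). The outcome is a differential inequality of the shape of \eqref{gradient-integral},
\[
\frac{d}{d\tau}\int_M f^{p+1}\,\mathrm{dv}_{g_\tau}+\int_M(R+\hat C_0)f^{p+1}\,\mathrm{dv}_{g_\tau}+\int_M|\nabla f^{(p+1)/2}|^2\,\mathrm{dv}_{g_\tau}\le\big((p+1)a+C(n,A)+\hat C_0\big)\int_M f^{p+1}\,\mathrm{dv}_{g_\tau},
\]
now with the weight $R+\hat C_0$ matched to Lemma~\ref{modified-Sobolev} in place of Lemma~\ref{Sobolev}. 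Introducing the same piecewise-linear time cutoff $\psi(\tau)$ as in the proof of Lemma~\ref{iteration} and then applying the uniform weighted Sobolev inequality of Lemma~\ref{modified-Sobolev} yields the iteration inequality
\[
\int_{\sigma t}^t\!\!\int_M f^{(p+1)(1+\frac1n)}\,\mathrm{dv}_{g_\tau}\le C\Big((p+1)a+C(n,A)(p+1)+\frac{1}{(\sigma-\sigma')t}\Big)^{\frac{n+1}{n}}\Big(\int_{\sigma't}^t\!\!\int_M f^{p+1}\,\mathrm{dv}_{g_\tau}\Big)^{\frac{n+1}{n}},
\]
the exact analogue of \eqref{iteration-formula}. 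Finally I would iterate with $p_{k+1}=(p_k+1)^{(n+1)/n}-1$, $p_0=p$, and the dyadic choices $\sigma'=\tfrac12+\tfrac14\sigma_k$, $\sigma=\tfrac12+\tfrac14\sigma_{k+1}$, $\sigma_k=\sum_{l=0}^k(\tfrac12)^l-1$; since the enlarged constants grow only polynomially in $p_k$ the infinite product defining the resulting constant converges, and one reaches \eqref{c^0-estimate-soliton} with $C_1$ depending only on $\Lambda,A,D$ and the constant $C$ (through $C_0$, hence $\hat C_0$, and the uniform Sobolev constant of Lemma~\ref{modified-Sobolev}).

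The only step demanding genuine care --- everything else being the bookkeeping already carried out for Lemma~\ref{iteration} --- is the treatment of the drift $X$, both where it appears in the subsolution and where $L_Xg$ enters the evolution of the volume form: one must check that it neither destroys the positivity of the gradient term feeding the Sobolev inequality nor inflates the iteration constants faster than polynomially in the exponent $p_k$. The Young split above does both at once, leaving a fixed positive multiple of $\int_M|\nabla f^{(p+1)/2}|^2$ and costing only an additive term of size $O(p_k)$ after multiplying through by $p+1$. I would also remark that, in contrast with Proposition~\ref{estimate-u-R}, no a priori spacetime integral bound on the quantity being iterated is needed here, since $f$ is an arbitrary nonnegative subsolution of \eqref{subsolution-2}; the iteration produces \eqref{c^0-estimate-soliton} directly, with the power $t^{-(n+1)/p}$ arising, as in Lemma~\ref{iteration}, from summing the contributions of the shrinking time cutoffs.
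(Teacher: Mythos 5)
Your skeleton (multiply by $f^p$, integrate by parts, account for the evolution of the volume form, time cutoff, weighted Sobolev inequality of Lemma~\ref{modified-Sobolev}, dyadic iteration) is exactly the paper's, but you handle the drift differently, and that is where there is a problem. The paper does not estimate the drift term at all: after integrating by parts, $-\int_M f^pX(f)\,{\rm dv}_{g_\tau}=\frac{1}{p+1}\int_M\Delta\theta\,f^{p+1}\,{\rm dv}_{g_\tau}$ is \emph{exactly cancelled} by the $-\frac{1}{p+1}\int_M\Delta\theta\,f^{p+1}\,{\rm dv}_{g_\tau}$ coming from the $L_Xg$ contribution to $\frac{d}{d\tau}{\rm dv}_{g_\tau}$ (the two terms you propose to estimate separately by Young are equal and opposite). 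This is why the paper's inequality (\ref{gradient-integral-2}) carries no trace of $X$ and why the lemma needs no pointwise control of $X$ along the flow.

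Your route instead invokes ``the uniform bound $|X|_{g_\tau}\le A$ available along (\ref{modified-Ricci-flow}) under (\ref{condition-ks})''. That bound is not among the hypotheses: (\ref{condition-ks}) only gives $|X|_g\le A$ for the \emph{initial} metric, and such a bound does not automatically persist under the flow; the only bound assumed along the flow is $|X|_{g_t}\le B/\sqrt{t}$, and that appears later in (\ref{further-condition-2}) as an extra assumption on almost K\"ahler--Ricci soliton sequences, not as part of the setting of this lemma. Your argument is repairable in two ways: either notice the cancellation (best, and what the paper does), or, if you insist on Young's inequality, use $|X|_{g_\tau}\le B/\sqrt{\tau}\le \sqrt{2}B/\sqrt{t}$ for $\tau\in[\tfrac{t}{2},t]$, which turns your zero-order penalty $A^2/2$ into $B^2/t$; this is of the same order as the cutoff term $\frac{1}{(\sigma-\sigma')t}$ already present in (\ref{iteration-formula}), so the iteration still closes with the stated power $t^{-(n+1)/p}$ --- but then the lemma acquires a hypothesis it was not meant to have. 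The rest of your write-up (keeping $R+\hat C_0\ge 0$ on the left to feed Lemma~\ref{modified-Sobolev}, the dyadic choice of $\sigma,\sigma'$, the observation that no a priori spacetime bound is needed) matches the paper.
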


\begin{proof}As in the proof of Lemma \ref{iteration} , multiplying both sides of (\ref{subsolution-2}) by $f^p$,  we have
\begin{align}
 &\int_M f^pf'_\tau {\rm dv}_{g_\tau}+p\int_M |\partial f|^2f^{p-1}{\rm dv}_{g_\tau}-\int_M \langle \partial \theta,\partial f\rangle f^p{\rm dv}_{g_\tau}\notag\\
&\leq a \int_M f^{p+1}{\rm dv}_{g_\tau}.\notag\end{align}
On the other hand, by (\ref{modified-Ricci-flow}), it is easy to see
\begin{align}
\int_M f^pf'_\tau {\rm dv}_{g_\tau} =\frac{1}{p+1}\frac{d}{d\tau}(\int_M f^{p+1} {\rm dv}_{g_\tau})+\frac{1}{p+1}\int_M (R-n-\Delta \theta)f^{p+1}
 d\text{v}_{g_\tau}.\notag
 \end{align}
 Thus we get
\begin{align}&\frac{1}{p+1}\frac{d}{d\tau}(\int_M f^{p+1} {\rm dv}_{g_\tau})+\frac{1}{p+1}\int_M (R-n)f^{p+1} d\text{v}_{g_\tau}+p\int_M |\partial f|^2f^{p-1}{\rm dv}_{g_\tau}\notag\\
 &\leq a\int_M f^p {\rm dv}_{g_\tau}.\notag
\end{align}
It follows
 \begin{align}\label{gradient-integral-2}
\frac{d}{d\tau}\int_M f^{p+1}   {\rm dv}_{g_{\tau}}+\int_M (R+\hat C_0)f^{p+1}   {\rm dv}_{g_{\tau}}+2\int_M |\nabla f^{\frac{p+1}{2}}|^2\notag\\
\leq ((p+1)a+n+C_0)\int_Mf^{p+1}  {\rm dv}_{g_{\tau}}.
\end{align}
Note that (\ref{gradient-integral-2}) is similar to (\ref{gradient-integral}).   Therefore,  we can follow the argument in the proof of Lemma \ref{iteration}
 to obtain (\ref{c^0-estimate-soliton}).

\end{proof}

Recall  that  according to \cite{WZ2} a sequence of weak almost  K\"ahler-Ricci solitons $(M_i, J_i, g^i, X_i)$ $(i\to\infty)$  satisfy  the condition i) in (\ref{condition-ks}) and
\begin{align}\label{almost-Ricci-soliton-integral}
iii)~ \int_{M_i} |{\rm Ric}(g^i)-g^i-L_{X_i}g^i|  {\rm dv}_{g^i}^n\rightarrow 0, ~\text{as}~i\to\infty.
\end{align}
As in \cite{WZ2},  we shall  further
assume  that  the solutions  $g^i_t$ of (\ref{condition-ks}) with the initial metrics $g^i$  satisfy
\begin{align}\label{further-condition-2}
&iii)~ |X^i|_{g^i_t}\leq \frac{B}{\sqrt{t}};\notag\\
&vi)~  \int_0^1dt\int_{M_i} |R(g^i_t)-\Delta\theta_{g^i_t}-n|  {\rm dv} _{g^i_t}^n\rightarrow 0, ~\text{as}~i\to\infty,
\end{align}
where $B$ is a uniform constant.
It was proved that under the conditions $i)$ of   (\ref{condition-ks}), and  (\ref{almost-Ricci-soliton-integral})  and (\ref{further-condition-2}) there exists a
subsequence of $\{(M_i, J_i, g^i, X_i)\}$ which converges  to a  K\"ahler-Ricci soliton away from  singularities of Gromov-Hausdorff limit  with codimension  4.

\begin{defi}\label{almost-kr-solitons}  $\{(M_i, J_i, g^i, X_i)\}$  are called   a sequence of  almost  K\"ahler-Ricci solitons if
 (\ref{condition-ks}), (\ref{almost-Ricci-soliton-integral})   and (\ref{further-condition-2}) are  satisfied.
\end{defi}

\begin{lem}\label{uniform-potentia-krl}
Let  $\{(M_i, J_i, g^i, X_i)\}$   be  a sequence of almost K\"{a}hler-Ricci solitons.
 Then there exists a uniform constant $C=C(\Lambda, D, B,C_0)$ such that for any $t\in (0, 1)$  there exists
  $N=N(t)$  such that for any $i\ge N$ it holds
$$|\nabla h^i_t|\leq  C~\text{and}~|R^i_t|\le C.$$
\end{lem}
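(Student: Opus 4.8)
The plan is to replay Proposition~\ref{estimate-u-R} and its corollary Proposition~\ref{corollary-1} with the drift operator $\frac{\partial}{\partial t}-(\Delta_t+X^i)$ in place of $\frac{\partial}{\partial t}-\Delta_t$, and with Lemma~\ref{modified-iteration} in place of Lemma~\ref{iteration}. Write $\theta^i_t$ for the potential of $X^i$ with respect to $g^i_t$, $h^i_t$ for the modified Ricci potential of $g^i_t$ (normalized along the flow as in \cite{TZ}, \cite{WZ2}), and $\tilde R^i_t:=R(g^i_t)-\Delta_t\theta^i_t$ for the modified scalar curvature, which equals $n$ on a genuine soliton. A computation along (\ref{modified-Ricci-flow}), parallel to (\ref{gardient-flow})--(\ref{r-gradient-flow}), gives $(\frac{\partial}{\partial t}-(\Delta_t+X^i))|\nabla h^i_t|^2\le|\nabla h^i_t|^2$ and shows that the nonnegative function $w^i_t:=\tilde R^i_t+n\Lambda+|\nabla h^i_t|^2$ satisfies $(\frac{\partial}{\partial t}-(\Delta_t+X^i))w^i_t\le w^i_t$, which is exactly the hypothesis (\ref{subsolution-2}) of Lemma~\ref{modified-iteration} with $a=1$.

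From \cite{WZ2} (the modified analogue of the spacetime bound of \cite{Ji} used in Proposition~\ref{estimate-u-R}) one has a uniform constant $C$, depending only on $\Lambda, A, D, B, C_0$, with $\int_0^1\int_{M_i}w^i_t\,{\rm dv}_{g^i_t}\,dt\le C$. Applying Lemma~\ref{modified-iteration} to $w^i_t$ (with $p=1$) then gives $w^i_t(x)\le C\,t^{-(n+1)}$, so in particular $|\nabla h^i_t|^2\le C\,t^{-(n+1)}$ and $\tilde R^i_t\le C\,t^{-(n+1)}$. Feeding this into the Sobolev inequality of Lemma~\ref{modified-Sobolev} produces non-collapsing ${\rm vol}(B(x,1))\ge c\,t^{n(n+1)}$, hence (since ${\rm vol}(M_i)=V$) a diameter bound ${\rm diam}(M_i,g^i_t)\le C\,t^{-n(n+1)}$; combined with the gradient bound this gives ${\rm osc}_{M_i}h^i_t\le C\,t^{-(n+1)(n+1/2)}$, exactly as in the passage from (\ref{gradient-u}) to (\ref{c0-u}).

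Next I would rerun the improvement leading to (\ref{gradient-estimate-2}) and (\ref{scalar-curvature}). Applying Lemma~\ref{modified-iteration} to $|\nabla h^i_t|^2$ and integrating by parts (using the drift--divergence identity to turn $\int_{M_i}-h^i_t(\Delta_t+X^i)h^i_t$ into a multiple of $\int_{M_i}h^i_t(\tilde R^i_t-n)$ up to controlled terms), the oscillation bound gives $|\nabla h^i_t|^2(x)\le C\,t^{-\kappa}\int_{t/2}^t\int_{M_i}|\tilde R^i_t-n|\,{\rm dv}_{g^i_t}$ for an explicit $\kappa=\kappa(n)$, and the two-sided version (as in the proof of (\ref{scalar-curvature})) gives $|\tilde R^i_t-n|(x)\le C\,t^{-\kappa}\int_{t/2}^t\int_{M_i}|\tilde R^i_t-n|\,{\rm dv}_{g^i_t}$. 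The hypothesis $\int_0^1\int_{M_i}|R(g^i_t)-\Delta_t\theta_{g^i_t}-n|\,{\rm dv}_{g^i_t}\,dt\to0$ of Definition~\ref{almost-kr-solitons} then forces, for each fixed $t\in(0,1)$, the existence of $N(t)$ with $\int_{t/2}^t\int_{M_i}|\tilde R^i_t-n|\,{\rm dv}_{g^i_t}\le t^{\kappa}$ for $i\ge N(t)$, whence $|\nabla h^i_t|\le C$ and $|\tilde R^i_t-n|\le1$. Finally $R(g^i_t)=\tilde R^i_t+\Delta_t\theta^i_t$, and $|\Delta_t\theta^i_t|\le C$ follows from the bound $|X^i|_{g^i_t}\le B/\sqrt{t}$ together with the crude curvature bound of the previous paragraph and an interior elliptic estimate for $\theta^i_t$; hence $|R^i_t|\le C$.

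The main obstacle is the execution of the two previous paragraphs in the drift setting: one must check that the evolution/Bochner identities for $|\nabla h^i_t|^2$ and $\tilde R^i_t$ pick up only the first-order term $X^i$ and no uncontrolled zeroth-order curvature error, so that Lemma~\ref{modified-iteration} really applies, and one needs the a priori spacetime bound $\int_0^1\int_{M_i}w^i_t<C$, which rests on the modified Perelman-type functional monotonicity of \cite{WZ1}, \cite{WZ2} rather than on anything elementary. A subsidiary nuisance is the passage from the modified scalar curvature $\tilde R^i_t$ back to the honest scalar curvature $R^i_t$, i.e. the bound $|\Delta_t\theta^i_t|\le C$; this is precisely where the hypothesis $|X^i|_{g^i_t}\le B/\sqrt{t}$ enters.
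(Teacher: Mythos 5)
Your overall architecture -- the drift Moser iteration of Lemma~\ref{modified-iteration}, integration by parts against the oscillation bound, and the hypothesis $vi)$ of (\ref{further-condition-2}) to kill the spacetime integral of $|R-\Delta\theta-n|$ -- is the same as the paper's, and that part would go through. The genuine gap is in your last step, the passage from the modified quantities back to $R^i_t$ and $\nabla h^i_t$. You propose to bound $\Delta_t\theta^i_t$ from $|X^i|_{g^i_t}\le B/\sqrt{t}$ ``together with an interior elliptic estimate for $\theta^i_t$,'' but $\Delta\theta$ is (a trace of) the first derivatives of $X$, and a pointwise bound on $|X|$ gives no pointwise control on those; nor is there any elliptic equation for $\theta$ with controlled right-hand side available at this stage. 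What the paper actually uses is the potential identity $\Delta\theta+|\nabla\theta|^2+X(h-\theta)+\theta=0$ (cf.\ \cite{CTZ}), which yields $\Delta\theta\le -X(h-\theta)-\theta\le C$ directly (since $|\nabla\theta|^2\ge0$), then $R\le n+1+\Delta\theta\le C$, then $\Delta\theta\ge R-n-1\ge -C$ from the lower scalar curvature bound, and finally $|\nabla\theta|^2=-X(h-\theta)-\theta-\Delta\theta\le C$, which is also what converts your bound on $\nabla(h-\theta)$ into the asserted bound on $\nabla h$. Without this identity the argument does not close.

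A second, related point you miss: for the term $X(h-\theta)$ to be bounded one needs the \emph{refined} gradient estimate $|\nabla(h-\theta)|\le\sqrt{t}$ (this is the meaning of the $\tfrac{1}{\sqrt{t}}$ in (\ref{R1})), so that it cancels the $B/\sqrt{t}$ growth of $|X|$; a bound $|\nabla(h-\theta)|\le C$ alone is not enough. The same $|X|\le B/\sqrt{t}$ hypothesis also has to be used inside the iteration itself, to control the cross term $\int\!\!\int|X(h-\theta)|$ by Cauchy--Schwarz when estimating $(\Delta+X)(h-\theta)$, not only at the end as you suggest. Finally, your claimed nonnegativity of $w=\tilde R+n\Lambda+|\nabla h|^2$ and the spacetime bound $\int_0^1\!\int w\le C$ from \cite{WZ2} are both unverified in the drift setting; the paper sidesteps the first by working with one-sided bounds on $(\Delta+X)(h-\theta)$ and recovering the lower bound on $R$ from condition $ii)$ of (\ref{condition-ks}).
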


\begin{proof} By
\begin{align}\label{h-evol}
&(\frac{\partial}{\partial t}-(\Delta+X))|\nabla(h-\theta)|^2\notag\\
&=-|\nabla \bar{\nabla}(h-\theta)|^2-|\nabla\nabla(h-\theta)|^2+|\nabla(h-\theta)|^2\notag\\
&\le |\nabla(h-\theta)|^2,
\end{align}
we apply  Lemma  \ref{modified-iteration} to get
\begin{align}&|\nabla(h-\theta)|^2\notag\\
&\le \frac{C}{t^{n+1}}\int_{\frac{t}{2}}^{t}\int_M|\nabla(h-\theta)|^2{\rm dv}_{g_\tau}d\tau\notag \\
&= \frac{C}{t^{n+1}}\int_{\frac{t}{2}}^{t}\int_M(\theta-h)\Delta(h-\theta){\rm dv}_{g_\tau}d\tau\notag \\
&\leq  \frac{C}{t^{n+1}}\int_{\frac{t}{2}}^{t}\int_M osc_M(h-\theta)|R-n-\Delta\theta|{\rm dv}_{g_\tau}d\tau.\notag
\end{align}
By (\ref{c0-u}),   it follows
\begin{align}\label{right2}
|\nabla(h-\theta)|^2\leq  \frac{C}{t^{(n+1)(n+\frac{3}{2})}}\int_{\frac{t}{2}}^{t}\int_M |R-n-\Delta\theta|{\rm dv}_{g_\tau}d\tau.
\end{align}

On the other hand,  by  the evolution equation of $(\Delta+X)(h-\theta)$ \cite{CTZ},
\begin{align}
&(\frac{\partial}{\partial t}-(\Delta+X))[(\Delta+X)(h-\theta)]\notag\\
&=(\Delta+X)(h-\theta)+|\nabla \bar{\nabla}(h-\theta)|^2, \notag
\end{align}
we have
\begin{align}
&(\frac{\partial}{\partial t}-(\Delta+X))[(\Delta+X)(h-\theta)+|\nabla(h-\theta)|^2]\notag\\
&\leq (\Delta+X)(h-\theta)+|\nabla(h-\theta)|^2\notag.
\end{align}
Then  applying Lemma  \ref{modified-iteration},  we get
\begin{align}\label{2-plus-estimate}
&(\Delta+X)(h-\theta)+|\nabla(h-\theta)|^2\notag\\
&\le\frac{C}{t^{n+1}}\int_{\frac{t}{2}}^{t}\int_M |(\Delta+X)(h-\theta)+|\nabla(h-\theta)|^2|{\rm dv}_{g_\tau}d\tau.
\end{align}
Note that by  iii) in (\ref{further-condition-2}) we have
\begin{align}
&\int_{\frac{t}{2}}^{t} \int_M|X(h-\theta)|{\rm dv}_{g_\tau}d\tau\notag \\
&\leq  B{\rm vol}(M) [\int_{\frac{t}{2}}^{t}\int_M|\nabla(h-\theta)|^2{\rm dv}_{g_\tau} d\tau]^{\frac{1}{2}}.\notag
\end{align}
It follows from  (\ref{c0-u}),
\begin{align}&\int_{\frac{t}{2}}^{t} \int_M |(\Delta+X)(h-\theta)+|\nabla(h-\theta)|^2|{\rm dv}_{g_\tau}\notag\\
&\le \int_{\frac{t}{2}}^{t} \int_M |R-n-\Delta\theta|{\rm dv}_{g_\tau} \notag\\
&+    C B{ \rm vol}(M)   \frac{1}{t^{\frac{1}{2}(n+1)(n+\frac{1}{2})}}
  [\int_{\frac{t}{2}}^{t} \int_M |R-n-\Delta\theta|{\rm dv}_{g_\tau} ]^{\frac{1}{2}}\notag\\
&+  C\frac{1}{t^{(n+1)(n+\frac{1}{2})}}  \int_{\frac{t}{2}}^{t} \int_M |R-n-\Delta\theta|{\rm dv}_{g_\tau}.\notag
\end{align}
Thus inserting the above inequality into (\ref{2-plus-estimate}), we derive
\begin{align}\label{2-plus-estimate-2}
&(\Delta+X)(h-\theta)+|\nabla(h-\theta)|^2\notag\\
&\leq  \frac{C}{t^{(n+1)(n+\frac{3}{2})}} (\int_{\frac{t}{2}}^{t}\int_M |R-n-\Delta\theta|{\rm dv}_{g_\tau}d\tau\notag\\
&+
 [\int_{\frac{t}{2}}^{t} \int_M |R-n-\Delta\theta|{\rm dv}_{g_\tau}d\tau ]^{\frac{1}{2}}).
 \end{align}

Combining  (\ref{right2}) and (\ref{2-plus-estimate-2}),  we see that
 for any $t\in (0, 1)$  there exists
  $N=N(t)$  such that
\begin{align}\label{R1}
|\frac{1}{\sqrt{t}}\nabla(h-\theta)|\le 1~\text{and}~R-n-\Delta\theta\le 1,~\forall~ i\ge N(t).
 \end{align}
It follows
  \begin{align}
 \Delta\theta =-|\nabla\theta|^2-X(h-\theta)-\theta\le C.\notag
\end{align}
As a consequence, we get $R\le C$, and so $|R|\le C$.

By (\ref {R1}), we have
 $$
\Delta\theta\geq R-n-1\geq -C.
$$
Thus
\begin{align}\label{gradient-theta}
 |\nabla\theta|^2=-X(h-\theta)-\theta-\Delta\theta\le C.
\end{align}
 Again by  (\ref {R1}),  we prove that
$|\nabla h|\leq C.$

\end{proof}

By  Lemma \ref{modified-Sobolev}  and  the scalar curvature estimate in  Lemma \ref{uniform-potentia-krl}, we see that for any $t\in (0, 1)$
 there exists  an  integer $N=N(t)$   such that the Sobolev constant $C_s$  of $g_t^i$ is uniformly bounded for any $i\ge N$.
 Then by the gradient  estimate of K\"ahler potentials   in  Lemma \ref{uniform-potentia-krl}, we can follow the arguments  in  Lemma   \ref{section-gradient-estimate}  and Lemma
\ref{gradient-section}  (also see Remark \ref{remark-1} and Remark \ref{remark-lemma-3.2}) to get an analogy of Proposition  \ref{corollary-1}.

\begin{prop}\label{uniform-soliton}
Let  $(M_i,g^i)$ be  a sequence of Fano manifolds with  almost K\"{a}hler-Ricci solitons  which satisfy (\ref{condition-ks}), (\ref{almost-Ricci-soliton-integral})  and (\ref{further-condition-2}).
 Then  for any $t\in (0, 1)$  there exist integers
  $N=N(t)$  such that for any $i\ge N$ and $l\ge l_0$   it holds,
\begin{align}\label{uniform-gradient-kr}
\|s\|_{h_t^i}+l^{-\frac{1}{2}}\|\nabla s\|_{h^i_t}\leq C l^{\frac{n}{2}}(\int_{M_i}|s|^2  {\rm dv}_{g^i_t})^{\frac{1}{2}}
\end{align}
and
\begin{align}\label{L^2-kr}
\int_{M^i}|v|_{h_t^i}^2\leq 4l^{-1}\int_{M_i}|\bar{\partial}\sigma|_{h_t^i}^2.
\end{align}
Here   $s\in H^0(M_i, K_{M_i}^{-l})$,   the norms of $|\cdot|_{h^i_t}$  are  induced by $g_t^i$,   and  the integer $l_0$ and  the uniform constant $C$  are independent  of $t$.

\end{prop}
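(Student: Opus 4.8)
The plan is to deduce both estimates from their almost K\"ahler--Einstein counterparts in Proposition~\ref{corollary-1}, by producing along the modified K\"ahler--Ricci flow (\ref{modified-Ricci-flow}) the same two inputs that were used there: a uniformly bounded Sobolev constant and a uniform bound on the gradient of the relevant K\"ahler potential, both valid once $i$ is large depending on $t$. First I would fix $t\in(0,1)$ and apply Lemma~\ref{uniform-potentia-krl}: there is $N_1=N_1(t)$ so that for all $i\ge N_1$ one has $|\nabla h^i_t|_{g^i_t}\le C$, $|\nabla\theta^i_t|_{g^i_t}\le C$ (the latter from (\ref{gradient-theta})) and $|R^i_t|\le C$, with $C=C(\Lambda,D,B,C_0)$ independent of $i$ and $t$. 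Plugging the bound on $R^i_t$ into Lemma~\ref{modified-Sobolev} then makes the Sobolev constant of $g^i_t$ uniformly bounded for $i\ge N_1$ (enlarging $N_1$ if necessary); this is the soliton analogue of the first step in the proof of Proposition~\ref{corollary-1}.

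Next, for (\ref{uniform-gradient-kr}): since $\omega_{g^i_t}\in 2\pi c_1(M_i)$ along (\ref{modified-Ricci-flow}), the induced metric $h^i_t$ on $K_{M_i}^{-l}$ has Chern curvature $l\,\omega_{g^i_t}$, so the identity $\Delta_{g^i_t}|s|_{h^i_t}^2=|\nabla s|^2-nl|s|^2$ and the Bochner formula for $|\nabla s|^2$ underlying Lemma~\ref{section-gradient-estimate} continue to hold. The only ingredient that must be replaced is the hypothesis (\ref{Ricci-potential-gradient}); for this I would use that the curvature condition i) of (\ref{condition-ks}) is preserved along (\ref{modified-Ricci-flow}) (cf.\ \cite{WZ2}), hence ${\rm Ric}(\omega_{g^i_t})\ge -\Lambda^2\omega_{g^i_t}+\sqrt{-1}\partial\bar\partial u^i_t$ for a potential $u^i_t$ coming from the vector-field term $L_{X_i}\omega_{g^i_t}$, whose gradient is uniformly bounded by the previous paragraph. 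The extension of Lemma~\ref{section-gradient-estimate} recorded in Remark~\ref{remark-1} --- which handles the extra Ricci terms in the Bochner formula for $|\nabla s|^2$ by integration by parts against the bounded-gradient potential, as in \cite{WZ2}, \cite{TZZZ} --- then gives (\ref{uniform-gradient-kr}) with $C$ independent of $t$.

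Finally, for (\ref{L^2-kr}) I would rerun the eigenvalue estimate of Lemma~\ref{gradient-section} for $(0,1)$-forms valued in $K_{M_i}^{-l}$: averaging the two Weitzenb\"ock identities $\Delta_{\bar\partial}\theta=\bar\nabla^*\bar\nabla\theta+{\rm Ric}(\theta,\cdot)+l\theta$ and $\Delta_{\bar\partial}\theta=\nabla^*\nabla\theta-(n-1)l\theta$, integrating by parts, and bounding ${\rm Ric}$ by $\sqrt{-1}\partial\bar\partial u^i_t$ with $|\nabla u^i_t|_{g^i_t}\le C$ exactly as in Remark~\ref{remark-lemma-3.2} (the computation of \cite{TZha}), one gets $\lambda_1(\bar\partial, K_{M_i}^{-l})\ge \frac{l}{2}-nC^2$. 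Choosing $l_0\ge 4nC^2$ (so also $l_0\ge 4n$) then forces $\lambda_1\ge l/4$ for all $l\ge l_0$, which is (\ref{L^2-kr}) by the H\"ormander $L^2$-theory; since $l_0$ depends only on $n$ and $C$, it is independent of $t$. The one genuinely new point compared with the almost K\"ahler--Einstein case, and where I expect to spend the most care, is checking that the metrics $g^i_t$ satisfy the hypothesis of Remark~\ref{remark-1} uniformly for $i\ge N(t)$ --- that is, combining the preservation of condition i) of (\ref{condition-ks}) along (\ref{modified-Ricci-flow}) with the potential gradient estimates of Lemma~\ref{uniform-potentia-krl}.
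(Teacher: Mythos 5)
Your proposal is correct and follows essentially the same route as the paper, which likewise derives the proposition by combining Lemma~\ref{modified-Sobolev} with the scalar-curvature and potential-gradient estimates of Lemma~\ref{uniform-potentia-krl} and then rerunning the arguments of Lemma~\ref{section-gradient-estimate} and Lemma~\ref{gradient-section} via Remarks~\ref{remark-1} and~\ref{remark-lemma-3.2}. The only superfluous step is your appeal to the preservation of condition i) of (\ref{condition-ks}) along the flow: since ${\rm Ric}(\omega_{g^i_t})=\omega_{g^i_t}+\sqrt{-1}\partial\bar\partial(-h^i_t)$, the hypothesis of Remark~\ref{remark-1} already follows directly from the bound $|\nabla h^i_t|\le C$ in Lemma~\ref{uniform-potentia-krl}.
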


By Proposition \ref{uniform-soliton},  we can  follow the arguments  in Proposition \ref{partial} and Theorem \ref{almost-ke-1} to prove

\begin{theo}\label{almost-ks}
Let $(M_i, g^i)$ be  a sequence of Fano manifolds with  almost K\"ahler-Ricci solitons and $ (M_\infty, g_\infty)$ be their  Gromov-Hasusdorff limit.  Then there exists  an integer $l_0>0$   which depending  only  on  $ (M_\infty, g_\infty)$  such that for any integer  $l>0$ there exists  a uniform constant $c_l>0$ with property:
\begin{align}\label{partial estimate}
\rho_{ll_0}(M_{i},g^{i})\geq c_l.
\end{align}
\end{theo}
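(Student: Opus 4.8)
The plan is to repeat, with the modified K\"ahler-Ricci flow (\ref{modified-Ricci-flow}) replacing (\ref{Ricci flow}), the two-step scheme used for Theorem \ref{almost-ke-1}. First one proves a soliton analogue of Proposition \ref{partial}: for every $x\in M_\infty$ and every sequence $p_i\to x$ there are fixed integers $l_x,i_0$ and a small time $t_x$ so that for $i\ge i_0$ there is a holomorphic $s_i\in\Gamma(K_{M_i}^{-l_x})$ with $\int_{M_i}|s_i|^2_{h^i_{t_x}}{\rm dv}_{g^i_{t_x}}\le1$ and $|s_i|_{h^i_{t_x}}(p_i)\ge\frac18$, where $g^i_t$ now solves (\ref{modified-Ricci-flow}). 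Then one compares $h^i_{t_x}$ with $h^i$, as in Lemma \ref{comparison}, to bring the estimate down to $t=0$; finally the covering argument from the proof of Theorem \ref{almost-ke-1} finishes the proof. Throughout, Proposition \ref{uniform-soliton} takes over the role of Proposition \ref{corollary-1}: for fixed $t\in(0,1)$ and $i\ge N(t)$ it supplies the uniform $C^0$ and gradient bound (\ref{uniform-gradient-kr}) for holomorphic sections of $K_{M_i}^{-l}$ and the $L^2$-estimate (\ref{L^2-kr}) for the $\bar\partial$-equation.

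For the analogue of Proposition \ref{partial}, Sections 4--5 go through almost verbatim: the tangent cone $C_x$, the trivial bundle $(L_0,h_0=e^{-\rho_x^2/2})$, the cut-off $\beta$ of Lemma \ref{cut-off}, and the almost-flat identification $\psi_j$ of a large power of $K_{\mathcal R}^{-1}$ with $L_0$ from Proposition \ref{almost-falt-bundle} depend only on the Gromov-Hausdorff geometry of $M_\infty$, which by \cite{WZ2} has singular set of codimension at least $4$ and carries a K\"ahler-Ricci soliton on its regular part. The one genuine change is that the pseudo-locality step producing diffeomorphisms $\varphi_i\colon K_j\to B_i\subset M_i$ with $\varphi_i^*g^i(t_0'r_j^2/l)\to g_\infty$, $\varphi_i^*J_i\to J_\infty$, must be invoked for (\ref{modified-Ricci-flow}); this is available in \cite{WZ2} precisely because condition iii) of (\ref{further-condition-2}), $|X^i|_{g^i_t}\le B/\sqrt t$, is the scale-invariant bound under which the rescaled modified flow converges to its static Ricci-flat blow-up limit. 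Pushing $\tau_{j_0}=\psi_{j_0}(\beta e)$ forward to $v_i$, solving $\bar\partial\sigma_i=\bar\partial v_i$ with (\ref{L^2-kr}), and correcting by the elliptic $C^0$-bound (\ref{elliptic}) then yields $s_i=v_i-\sigma_i$ with the claimed properties, exactly as in the derivation of (\ref{small-c0-section}), with $t_x=t_0'r_{j_0}^2/l$.

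The replacement for Lemma \ref{comparison} is where the modified flow really enters. Writing $\omega_{g^i_t}=\omega_{g^i}+\sqrt{-1}\partial\bar\partial\phi_i$, the potentials satisfy $\partial_t\phi_i=\log(\omega_{\phi_i}^n/\omega_{g^i}^n)+\phi_i+X_i(\phi_i)-f_{g^i}$ and the volume forms evolve by $\partial_t(\omega_{\phi_i}^n/\omega_{g^i}^n)=(-R^i_t+n+\Delta\theta_{g^i_t})(\omega_{\phi_i}^n/\omega_{g^i}^n)$. Using Lemma \ref{uniform-potentia-krl}, which gives $|R^i_t|\le C$ and $|\nabla h^i_t|\le C$, hence by (\ref{gradient-theta}) also $|\nabla\theta_{g^i_t}|$ and $|\Delta\theta_{g^i_t}|$ bounded for $i\ge N(t)$, the maximum principle and Green's formula against the controlled scalar curvature give $\|\phi_i\|_{C^0}\le C$ and ${\rm vol}_{g^i_t}(\Omega)\ge{\rm vol}_{g^i}(\Omega)-Ct$ for all $\Omega\subset M_i$ and small $t$, exactly as in Lemma \ref{comparison}; this is where the constants in (\ref{condition-ks}) and (\ref{further-condition-2}) get used. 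Hence $e^{-Cl}|\cdot|_{h^i}\le|\cdot|_{h^i_t}\le e^{Cl}|\cdot|_{h^i}$, and the distribution-function argument of Claim \ref{claim-6} turns the bounds on $s_i$ into $\int_{M_i}|\hat s_i|^2_{h^i}{\rm dv}_{g^i}=1$ and $|\hat s_i|_{h^i}(p_i)\ge c_x=c(l_x,\Lambda,A,D,C_0)$ after normalizing, for $t_x$ small enough.

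Finally one runs the covering argument from the proof of Theorem \ref{almost-ke-1} unchanged: (\ref{uniform-gradient-kr}) spreads the lower bound over a ball $B_{p_i}(r_x)$ with $r_x\sim c_x l_x^{-(n+1)/2}$; finitely many such balls cover $M_\infty$, hence $M_i$ for large $i$; with $l_0=\prod_\alpha l_{x_\alpha}$ one glues the resulting sections as in \cite{DS}, \cite{T5} to obtain at each point of $M_i$ a unit-norm holomorphic section of $K_{M_i}^{-l_0}$ bounded below by a uniform $c'>0$, and the same scheme applied to $ll_0$ gives the estimate for all $l$. The main obstacle is the comparison step: one must check that the extra term $L_{X_i}g^i$ in (\ref{modified-Ricci-flow}) does not destroy the control of the K\"ahler potential $\phi_i$ or the distortion of the volume measure over the short interval $[0,t_x]$. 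This rests on the a priori bounds for $\theta_{g^i_t}$ and $R^i_t$ in Lemma \ref{uniform-potentia-krl}, which themselves come from the Moser iteration Lemma \ref{modified-iteration} together with the integral hypotheses (\ref{almost-Ricci-soliton-integral}) and (\ref{further-condition-2}).
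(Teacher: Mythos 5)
Your Steps 1 and 3 coincide with the paper's: the soliton analogue of Proposition \ref{partial} is obtained exactly as you describe, with Proposition \ref{uniform-soliton} replacing Proposition \ref{corollary-1} and the pseudo-locality theorem of \cite{WZ2} replacing that of \cite{TW}, and the covering/gluing argument of Theorem \ref{almost-ke-1} is reused verbatim. The divergence is in Step 2, the comparison between $h^i_{t_x}$ and $h^i$. The paper does \emph{not} estimate the modified flow directly: it conjugates by the one-parameter group $\Phi_t$ generated by $-X$, so that $\Phi_t^*g_t$ solves the \emph{unmodified} flow (\ref{Ricci flow}); it then applies the argument of Lemma \ref{comparison} to $\Phi_t^*g_t$ (the only new ingredient being the lower bound on the Green function of $g$ under the Bakry--\'Emery condition i) of (\ref{condition-ks})), and finally transports the pointwise lower bound from $\Phi_{-t}(p)$ back to $p$ using the gradient estimate for holomorphic sections together with $d(p,\Phi_{-t}(p))\le At$. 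This is Lemma \ref{comparison1}.

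Your direct route has a genuine gap precisely at the point you yourself flag as "the main obstacle.'' The volume distortion bound ${\rm vol}_{g^i_t}(\Omega)\ge {\rm vol}_{g^i}(\Omega)-Ct$ requires a pointwise upper bound on $-R^i_\tau+n+\Delta\theta_{g^i_\tau}$ for \emph{all} $\tau\in[0,t_x]$, since $\partial_\tau\log(\omega_{\phi_i}^n/\omega_{g^i}^n)=-R^i_\tau+n+\Delta\theta_{g^i_\tau}$ must be integrated from $\tau=0$. In the K\"ahler--Einstein case this comes for free because $\inf_M R$ is preserved under (\ref{Ricci flow}) by the maximum principle starting from the initial bound. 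For the modified flow no such preserved quantity is available from (\ref{condition-ks}) and (\ref{further-condition-2}): the hypotheses give no bound on $\Delta\theta_{g^i}$ or $\nabla h_{g^i}$ at $\tau=0$, and Lemma \ref{uniform-potentia-krl}, which you invoke, only yields $|R^i_\tau|\le C$ and $|\nabla\theta_{g^i_\tau}|\le C$ for $i\ge N(\tau)$, with $N(\tau)\to\infty$ and the intermediate constants degenerating like $\tau^{-(n+1)(n+3/2)}$ as $\tau\to 0$; no single $i$ controls the whole interval down to $0$. The same difficulty infects your maximum-principle and Green-function bounds on $\phi_i$, which must absorb the extra terms $X_i(\phi_i)$ and $\int_M X_i(\phi_i)\,{\rm dv}$. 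The pullback by $\Phi_t$ is exactly the device that removes these terms and reduces everything to the already-proved Lemma \ref{comparison}, at the harmless cost of the displacement correction $|s'|_h(p)\ge |s'|_h(\Phi_{-t}(p))-CAt$.
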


\begin{proof}
We give a  sketch of  proof of Theorem  \ref{almost-ks}.

Step 1.  By  the rescaling method as in proof of  Proposition \ref{partial} with the helps of   Proposition \ref{uniform-soliton} and
 the pseudo-locallity theorem in \cite{WZ2},
   we  have an analogy of  Proposition \ref{partial}:   For any
sequence  of  $p_i\in M_i$ which converge to $x\in M_\infty$,  there  exist  two large number $l_x$ and $i_0$,  and
  a small time  $ t_x$    such that for any $i\ge i_0$  there exists a holomorphic section  $s_i\in \Gamma (K_{M_i}^{-l_x}, h_{t_x}^i)$ which satisfies
 \begin{align}\label{l2-norm-less-1}
 \int_{M_i}|s_i|_{h^i_{t_x}}^2{\rm dv}_{g^i_{t_x}}\le 1~{\rm and}~
 |s_i|_{h^i_{t_x}}(p_i) \geq  \frac{1}{8},
\end{align}
 where $g^i_t$ is  a   solution of (\ref{modified-Ricci-flow})  with the initial metric $g^i$ and  $h^i_{t_x}$ is the hermitian metric of $K_{M_i}^{-l_x}$  induced by
$g^i_{t_x}$.

Step 2. We can compare the $C^0$-norm of holomorphic  sections   with respect to  the varying  metrics  $g_t$ evolved in  the flow (\ref{modified-Ricci-flow}).
    In fact, we have

\begin{lem}\label{comparison1}
Let  $(M, g)$ be a Fano manifold with $\omega_g\in 2\pi c_1(M)$ which  satisfies (\ref{condition-ks}), and $g_t$     a solution of (\ref{modified-Ricci-flow}) with the initial metric $g$.   Then there exists  a small $t_0=t_0(l, \Lambda, D)$ such that the following is true:
if  $s\in \Gamma(M, K_M^{-l})$ is  a holomorphic section with
\begin{align}\label{norm-condition}
\int_M|s|^2_{h_t}{\rm{dv}}_{g_t}=1
\end{align} for some $t\le t_0$ which satisfies
\begin{align}\label{peak-condition}
|s|_{h_t} (p)\geq c>0,
\end{align}
then there is a holomorphic section $s'$ of $K_M^{-l}$ which satisfies
\begin{align}
|s'|_{h} (p)\geq c'>0 ~{\rm and}~\int_M|s'|^2_{h}{\rm dv}_{g}\le c'',\notag
\end{align}
where    ${h_t}$ and $h$ are   the hermitian metrics  of $K_{M}^{-l}$  induced by
$g_t$ and $g$, respectively,   and  the constants  $c'$  and $c''$  depend only on $c,l, \Lambda$, $A$, $C_0$ and  $ D$.
\end{lem}

\begin{proof}[Proof of Lemma \ref{comparison1}] Let   $\Phi_t$  be a one-parameter subgroup generated by $-X$.  Then $\Phi_t^*g_t$ is  a  solution of (\ref{Ricci flow}).   It is clear that
 (\ref{norm-condition}) also holds for $\Phi_t^*s, \Phi_t^*g_t, \Phi_t^*h_t$ and  the condition (\ref{peak-condition}) is equivalent to
$|\Phi_t^*s|_{\Phi_t^*h_t}(\Phi_{-t}(p))\geq c.$
Since the  Green functions  associated to  the metric $g$  is    bounded below  under the condition  $i)$ of (\ref{condition-ks}) (cf. \cite{Ma}, \cite{CTZ}),   we can
 follow the argument  in  Lemma  \ref{comparison} for   the metrics $\Phi_t^*g_t$ to obtain
 \begin{align}
|\Phi_t^*s|_{h} (\Phi_{-t}(p))\geq \tilde c~{\rm and}~
\int_M|\Phi_t^*s|^2_{h}{\rm dv}_{g}\leq  c'',\notag
\end{align}
where  the  constant   $ \tilde c$   depends only on $c,l, \Lambda$, $A$  and  $ D$.
  Let  $s'=\Phi_t^*s$.
Then by the gradient estimate of $|\nabla s'|\leq C(l,\Lambda,D,C_0,A)$, we have
\begin{align}
|s'|_{h} (p)\geq |s'|_{h} (\Phi_{-t}(p))-C(\Lambda,D,C_0,A)At\geq c'.\notag
\end{align}
This proves Lemma \ref{comparison1}.
\end{proof}

Step 3.  By using the covering argument as in Theorem \ref{almost-ke-1} together  with the results in Step 1 and Step 2, we can finish the proof of Theorem \ref{almost-ks}.

\end{proof}

\vskip3mm

\section{Proof of Corollary \ref{algebaric-structure}}

In this section, for simplicity, we just  give a proof of Corollary \ref{algebaric-structure} in case of almost  K\"{a}hler-Einstein manifolds with
 dimension  $n\ge 2$.
We assume that  a sequence of almost K\"{a}hler-Einstein manifolds $(M_i,g^i)$ with   a limit $(M_\infty,g_\infty)$ in Goromov-Hausdorff topology satisfies  the partial $C^0$-estimate,
 \begin{align}\label{partial-estimate}
\rho_l(M_i,g^i)\geq c_l>0,
\end{align}
 for some integer  $l$.
Then,  as an application of (\ref{partial-estimate}),  we have
\begin{align}\label{cohomology}
 H^0(M_i,K_{M_i}^{-m})\subseteq H^0(M_i,K_{M_i}^{-(m-l)})\otimes H^0(M,K_{M_i}^{-l}),
 \end{align}
 where  $m\geq l(n+2+[\Lambda^2])$  is any  integer and the constant $-\Lambda^2$ is a uniform lower bound of Ricci curvature of $(M_i,g^i)$ (cf. Proposition 7,  \cite{L1})\footnote{There is a generalization of  (\ref{cohomology}) under the Bakry-Em\'ery Ricci curvature  condition in Appendix.}

  We need a strong version of (\ref{partial-estimate}) as follows.

\begin{lem}\label{strong-version}
 For two different  points  $x, y\in M_\infty$, there exist $\ell=\ell(n,\Lambda, D,$
 \newline $x,y)$,  which is a multiple of $l$,  and  two
 sections  $s_x,s_y\in H^0(M_i,K_{M_i}^{-\ell})$ such that
\begin{align}\label{separate}
|s_x(p_i)|_{h_i }=|s_y(q_i)|_{h_i}=1~{\rm and}~ s_x(q_i)= s_y(p_i)=0,
\end{align}
where $p_i\rightarrow x, q_i\rightarrow y$.
 \end{lem}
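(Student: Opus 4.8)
The plan is to produce two peak holomorphic sections of a common power of $K_{M_i}^{-1}$, one concentrated near $p_i$ and one near $q_i$, and then kill their cross values by a one–dimensional projection. Fix the distinct points $x\neq y$, set $2\rho={\rm dist}(x,y)>0$, and choose $p_i\to x$, $q_i\to y$. First I would apply the peak–section construction of Proposition~\ref{partial} (and Theorem~\ref{almost-ke-1}, via Lemma~\ref{comparison}) at $x$ and at $y$, obtaining, for all large $i$, holomorphic sections $\hat s_x\in H^0(M_i,K_{M_i}^{-\ell_x})$ and $\hat s_y\in H^0(M_i,K_{M_i}^{-\ell_y})$ with
\begin{align*}
&\int_{M_i}|\hat s_x|_{h_i}^2\,{\rm dv}_{g^i}\le C,\quad |\hat s_x(p_i)|_{h_i}\ge c,\quad |\hat s_x(q_i)|_{h_i}\le\kappa,\\
&\int_{M_i}|\hat s_y|_{h_i}^2\,{\rm dv}_{g^i}\le C,\quad |\hat s_y(q_i)|_{h_i}\ge c,\quad |\hat s_y(p_i)|_{h_i}\le\kappa,
\end{align*}
where $c,C$ are uniform and $\kappa>0$ can be made as small as we wish. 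The smallness is the heart of the matter: in the notation of Proposition~\ref{partial}, the model section $v_i$ underlying $\hat s_x$ is supported in a ball about $p_i$ of radius $O(\ell_x^{-1/2})$, which lies well inside $B(p_i,\rho)$ once the scale of the construction is small enough, so $v_i$ is literally $0$ at $q_i$; the residual $\bar\partial$–correction $\sigma_i$ is then holomorphic on $B(q_i,\rho/2)$, has $L^2$–norm bounded by the free H\"ormander parameter $\eta$, and hence $|\sigma_i(q_i)|$ is controlled by an interior mean–value inequality for the subsolution $|\sigma_i|^2$, whose constant depends only on the (now fixed) flow time $t_x$ together with the volume lower bound and the curvature bound of $B(q_i,\rho/2)$ in $g^i_{t_x}$ supplied by Proposition~\ref{estimate-u-R}. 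Choosing $\eta$ last — after the scale, the power $\ell_x$ and the time $t_x$, and in particular small enough to absorb the normalisation factor $O(\ell_x^{n/2})$ and the factor $e^{C\ell_x}$ from the comparison $e^{-C\ell_x}|\cdot|_{h_i}\le|\cdot|_{h^i_{t_x}}\le e^{C\ell_x}|\cdot|_{h_i}$ of Lemma~\ref{comparison} — makes $\kappa$ as small as required; the same applies to $\hat s_y$ at $p_i$.

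With these sections the rest is soft. Put $\ell:=l\,\ell_x\ell_y$ and replace $\hat s_x,\hat s_y$ by $\tilde s_x:=\hat s_x^{\otimes(\ell/\ell_x)}$, $\tilde s_y:=\hat s_y^{\otimes(\ell/\ell_y)}$, which are sections of $K_{M_i}^{-\ell}$ of the common power $\ell$, a multiple of $l$; since pointwise norms are multiplicative under tensor products and $\kappa\le1$, one still has $|\tilde s_x(p_i)|_{h_i}\ge c_0$, $|\tilde s_x(q_i)|_{h_i}\le\kappa$, $|\tilde s_y(q_i)|_{h_i}\ge c_0$, $|\tilde s_y(p_i)|_{h_i}\le\kappa$ for a fixed $c_0>0$, and $|\tilde s_y(p_i)|_{h_i}\le M_0$ for a fixed $M_0$ by Remark~\ref{remark-1}. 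Now the fibre $(K_{M_i}^{-\ell})_{q_i}$ is one complex dimensional and $\tilde s_y(q_i)\neq0$, so there is a unique $\lambda\in\mathbb C$ with $\tilde s_x(q_i)=\lambda\,\tilde s_y(q_i)$, $|\lambda|\le\kappa/c_0$; setting $s_x:=\tilde s_x-\lambda\tilde s_y$ we get $s_x(q_i)=0$ and
\[
|s_x(p_i)|_{h_i}\ \ge\ |\tilde s_x(p_i)|_{h_i}-|\lambda|\,|\tilde s_y(p_i)|_{h_i}\ \ge\ c_0-\tfrac{\kappa}{c_0}M_0\ \ge\ \tfrac{c_0}{2}
\]
for $\kappa$ small. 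Symmetrically, with $\mu\in\mathbb C$ such that $\tilde s_y(p_i)=\mu\,\tilde s_x(p_i)$, the section $s_y:=\tilde s_y-\mu\tilde s_x$ satisfies $s_y(p_i)=0$ and $|s_y(q_i)|_{h_i}\ge c_0/2$. Dividing $s_x$ by $|s_x(p_i)|_{h_i}$ and $s_y$ by $|s_y(q_i)|_{h_i}$ (both nonzero) yields sections of $K_{M_i}^{-\ell}$ with $|s_x(p_i)|_{h_i}=|s_y(q_i)|_{h_i}=1$ and $s_x(q_i)=s_y(p_i)=0$, which is $(\ref{separate})$; the integer $\ell=l\,\ell_x\ell_y$ depends only on $n,\Lambda,D,x,y$ (through $\rho$, which dictates the scale of the construction) and is a multiple of $l$.

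I expect the main obstacle to be exactly the decay estimate $|\hat s_x(q_i)|_{h_i}\le\kappa$ of the first step. Since $\{(M_i,g^i)\}$ need not converge smoothly near $q_i$, one cannot quote a Gaussian decay of $\hat s_x$ directly; instead one must keep the model section supported well inside $B(p_i,\rho)$ (forcing the construction scale small, hence $\ell_x$ large but \emph{fixed}), obtain a Moser–type interior $L^2\to L^\infty$ bound for the holomorphic part $\sigma_i$ of the correction on the fixed ball $B(q_i,\rho/2)$ uniform in $i$ — its constants coming only from the volume and curvature estimates of Proposition~\ref{estimate-u-R} for the fixed time $t_x$ — and only then shrink $\eta$ so that, after normalisation and the comparison of Lemma~\ref{comparison}, the normalised $s_x$ vanishes at $q_i$ and has unit norm at $p_i$. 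Once this quantitative localisation is in place, the remainder is the linear algebra above and a routine accounting of (fixed) constants; in the almost K\"ahler–Ricci soliton case one runs the same argument with Theorem~\ref{almost-ks} and Lemma~\ref{comparison1} in place of Proposition~\ref{partial} and Lemma~\ref{comparison}.
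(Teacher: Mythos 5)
Your proposal is correct and follows essentially the same route as the paper's proof: peak sections at $x$ and $y$ built from Proposition \ref{partial} with disjointly supported model sections, smallness of the cross-values $|s^i_x(q_i)|,|s^i_y(p_i)|$ extracted from the $C^0$-control of the $\bar\partial$-corrections $\sigma_i^x,\sigma_i^y$, and then the identical $2\times 2$ elimination $\tilde s_x=s_x-\frac{s_x(q_i)}{s_y(q_i)}s_y$ (and symmetrically) followed by normalization at $p_i$ and $q_i$. If anything you give more detail than the paper on the two points it glosses over, namely the interior $L^2\to L^\infty$ estimate for the locally holomorphic correction $\sigma_i^x$ near $q_i$ and the passage to a common power $\ell$ via tensor powers.
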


\begin{proof}
 As in the proof of Proposition \ref{partial}, we can choose two compact sets $V(x;\delta_1^x), V(y;\delta_1^y)$ in $C_x$ and $C_y$,  respectively, such that
 $\phi_i\circ \psi_j(V(x;\delta_1^x))$ and  $\phi_i\circ\psi_j(V(y;\delta_1^y))$ are disjoint as long as  $j$ and $i$ are large enough.
 Let   $v_i^x,\sigma_i^x,s^i_x\in \Gamma(M_i, K_{M_i}^{-l_x})$ and $v_i^y,\sigma_i^y,s^i_y\in \Gamma(M_i, K_{M_i}^{-l_y})$  be sections associated  $x$ and $y$,  respectively. We may assume that $l_x=l_y=\ell$ for a multiple of $l$.  Moreover, by
 the $C^0$-estimate of $\sigma_i^x$ in $V(x;\delta^x)$  in (\ref{small-c0-section}), we see that $|s^i_x(q_i)|$ is small. Similarly, $|s^i_y(p_i)|$ is also small. Now we  define holomorphic sections
 \begin{align}
\tilde{s}^i_x=s^i_x-\frac{s^i_x(q_i)}{s^i_y(q_i)}s^i_y~{\rm and}~ \tilde{s}^i_y=s^i_y-\frac{s^i_y(p_i)}{s^i_x(p_i)}s^i_x.
 \end{align}
 Clearly, $\tilde s_x(q_i)= \tilde s_y(p_i)=0.$
 Then  $s_x=\frac{\tilde{s}^i_x}{|\tilde{s}^i_x(p_i)|_{h_i}}$
 and  $s_y=\frac{\tilde{s}^i_y}{|\tilde{s}^i_y(q_i)|_{h_i}}$  will satisfy (\ref{separate}).

 \end{proof}

By Lemma \ref{strong-version}, we prove

\begin{prop}\label{variety}
 Let $\{(M_i,g^i)\}$ be a sequence of Fano manifolds with Ricci bounded from below and diameter bounded from above,
 and $(M_\infty,g_\infty)$ its limit in Goromov-Hausdorff topology. Suppose that
(\ref{partial-estimate})  and  (\ref{separate})  in Lemma \ref{strong-version}  hold.  Then  $M_\infty$ is homeomorphic to an algebraic   variety.
\end{prop}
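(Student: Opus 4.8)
The plan is to realize $M_\infty$ as the Hausdorff limit of the images of the Kodaira-type maps attached to the complete linear systems $|K_{M_i}^{-l}|$, and then to show that the limit of these maps is a homeomorphism onto that image.

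First I would set up the maps. Fix the integer $l$ for which the partial $C^0$-estimate~(\ref{partial-estimate}) holds, and for each $i$ choose an $L^2(g^i)$-orthonormal basis $s_0^i,\dots,s_{N_i}^i$ of $H^0(M_i,K_{M_i}^{-l})$. By Remark~\ref{remark-1} applied with $u\equiv 0$ (legitimate since ${\rm Ric}(g^i)\ge-\Lambda^2 g^i$ and the Sobolev constant of $g^i$ is uniformly bounded in terms of $\Lambda,D,V$) one has $|s|_{h_i}^2(x)\le C(n,\Lambda,D)\,l^{n}\|s\|_{L^2}^2$, hence $\rho_l\le C(n,\Lambda,D)l^{n}$ pointwise and $N_i+1=\int_{M_i}\rho_l\,{\rm dv}_{g^i}\le C'(n,\Lambda,D,l)$; passing to a subsequence we may assume $N_i\equiv N$. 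Since $\rho_l>0$ everywhere by~(\ref{partial-estimate}), the map $T_i:=[\,s_0^i:\cdots:s_N^i\,]\colon M_i\to\mathbb{CP}^N$ is a well-defined holomorphic map, determined up to the $U(N+1)$-ambiguity in the choice of basis. Its image $X_i:=T_i(M_i)$ is a projective subvariety of $\mathbb{CP}^N$ of pure dimension $n$, because $\int_{M_i}(T_i^*\omega_{FS})^n=\int_{M_i}(l\,\omega_{g^i})^n=l^{n}(2\pi c_1(M_i))^n>0$, and of degree at most the Chern number $l^{n}(2\pi c_1(M_i))^n=l^{n}{\rm vol}(M_i,g^i)$, which is uniformly bounded. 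By the projectivity of the Chow variety parametrizing cycles of bounded dimension and degree in $\mathbb{CP}^N$ (equivalently, Bishop's compactness theorem), after a further subsequence the $X_i$ converge, both as currents and in Hausdorff distance, to a projective subvariety $X_\infty\subseteq\mathbb{CP}^N$.

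Next I would extract the limiting map and its properties. The gradient estimate of Remark~\ref{remark-1}, $\|\nabla s\|_{h_i}\le C(n,\Lambda,D)\,l^{(n+1)/2}\|s\|_{L^2}$, together with $\rho_l\ge c_l>0$, shows that the $T_i$ are uniformly Lipschitz from $(M_i,g^i)$ into $(\mathbb{CP}^N,\omega_{FS})$. Choosing $\varepsilon_i$-almost isometries $M_i\to M_\infty$ and diagonalizing, an Arzel\`a--Ascoli argument adapted to Gromov--Hausdorff convergence produces a Lipschitz limit map $T_\infty\colon M_\infty\to\mathbb{CP}^N$; its image is exactly $X_\infty$, since a point of $X_\infty$ is a Hausdorff limit of points $T_i(p_i)\in X_i$ whose preimages $p_i$ subconverge in $M_\infty$, and conversely $T_\infty(p)=\lim T_i(p_i)\in\lim X_i$. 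To see that $T_\infty$ separates points, fix $x\ne y$ in $M_\infty$ and $p_i\to x$, $q_i\to y$: by hypothesis~(\ref{separate}) (Lemma~\ref{strong-version}) there are sections $s_x,s_y\in H^0(M_i,K_{M_i}^{-\ell})$, with $\ell$ a multiple of $l$ and bounded $L^2$-norm, satisfying $|s_x(p_i)|_{h_i}=1$ and $s_x(q_i)=0$; via the surjectivity~(\ref{cohomology}) such a section is a polynomial of the fixed degree $\ell/l$ in the homogeneous coordinates of $\mathbb{CP}^N$, hence descends to a regular function on $X_i$ vanishing at $T_i(q_i)$ but not at $T_i(p_i)$, and the uniform $C^0$-bounds force $d_{\mathbb{CP}^N}(T_i(p_i),T_i(q_i))\ge\kappa(x,y)>0$; letting $i\to\infty$ gives $T_\infty(x)\ne T_\infty(y)$.

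Finally, $T_\infty\colon M_\infty\to X_\infty$ is continuous (Lipschitz), surjective, and injective by the previous step. Since $M_\infty$ is compact — a Gromov--Hausdorff limit of manifolds of uniformly bounded diameter — and $X_\infty$ is Hausdorff, a continuous bijection between them is automatically a homeomorphism, so $M_\infty$ is homeomorphic to the projective variety $X_\infty$. I expect the main obstacle to be the compatibility of the two limiting procedures: the Gromov--Hausdorff convergence $M_i\to M_\infty$ and the Chow/Hausdorff convergence $X_i\to X_\infty$ of algebraic cycles. One must run the equi-Lipschitz Arzel\`a--Ascoli argument along a common subsequence for which the almost isometries, the cycles $X_i$, and the maps $T_i$ all converge, check that $T_\infty$ is independent of the $U(N+1)$-ambiguity and is proper onto $X_\infty$, and only then do continuity, injectivity (from the assumed separation~(\ref{separate})) and surjectivity combine, via compactness, into the desired homeomorphism. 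In the situation where this proposition is used, Theorem~\ref{main-theorem-wang-zhu}, the genuinely hard input is instead the verification of the separation hypothesis~(\ref{separate}) itself, i.e.\ Lemma~\ref{strong-version}, which rests on the peak-section construction and the metric comparison of Lemma~\ref{comparison}.
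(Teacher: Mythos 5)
Your overall architecture (Tian's original scheme, which the paper also follows) is right: bounded-degree images, Chow/Bishop compactness, uniformly Lipschitz maps, a limit map $T_\infty$ onto the Chow limit, and the observation that a continuous bijection from the compact space $M_\infty$ to a Hausdorff space is a homeomorphism. The gap is in the injectivity step, and it comes from which power you embed with. You work with the level-$l$ map $T_l$ and claim that the separating section $s_x\in H^0(M_i,K_{M_i}^{-\ell})$ from Lemma \ref{strong-version} is, ``via the surjectivity (\ref{cohomology}),'' a homogeneous polynomial of degree $\ell/l$ in the level-$l$ coordinates. But (\ref{cohomology}) only gives $H^0(K_{M_i}^{-m})\subseteq H^0(K_{M_i}^{-(m-l)})\otimes H^0(K_{M_i}^{-l})$ for $m\geq l(n+2+[\Lambda^2])$; iterating it you can peel off factors of $H^0(K_{M_i}^{-l})$ only while the residual exponent stays at least $k_0l$ with $k_0=n+2+[\Lambda^2]$, so you end up with $H^0(K^{-\ell})\subseteq H^0(K^{-k_0l})\otimes \sym^{\ell/l-k_0}H^0(K^{-l})$ and an irreducible factor in $H^0(K^{-k_0l})$ that need not lie in $\sym^{k_0}H^0(K^{-l})$. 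Hence $s_x$ need not descend to anything on the image of $T_l$, and separation of $x,y$ at level $\ell$ does not transfer down to level $l$ (the implication between linear systems goes the other way: more sections separate more, not fewer). With only the stated hypotheses, $T_{l,\infty}$ may genuinely fail to be injective.

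The paper's proof avoids this by embedding with $T_{k_0l}$ and running the argument in the contrapositive: if $T_{k_0l,\infty}(x)=T_{k_0l,\infty}(y)$, then dividing by a nonvanishing product of level-$l$ sections shows $T_{il,\infty}(x)=T_{il,\infty}(y)$ for all $i\leq k_0$, and then (\ref{cohomology}) plus induction on $k$ propagates the identification to $T_{kl,\infty}$ for \emph{every} $k$, contradicting the separation supplied by Lemma \ref{strong-version} at the level $k_1l$ (which depends on $x,y$). To repair your write-up, replace $T_l$ by $T_{k_0l}$ throughout (the uniform dimension bound, Lipschitz bound, and Chow compactness go through verbatim) and replace your ``descend to a regular function'' step by this downward-then-upward propagation argument; the rest of your proof then stands.
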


\begin{proof} By (\ref{partial-estimate}),
for any $k$, we can define holomorphisms
\begin{align}
T_{kl,i}:M_i\rightarrow \mathbb CP^N,\notag
\end{align}
where $N+1={\rm dim}  H^0(M_i,K_{M_i}^{-kl})$  is constant if  $i$ is large enough.
 Since $T_{kl,i}$ is uniformly Lipschitz by (\ref{section-estiamte-1}),  we get a limit map
\begin{align}
T_{kl,\infty}: M_\infty\rightarrow \mathbb  CP^N.\notag
\end{align}
 On the other hand, the images $W^{kl}_i$  of $T_{kl,i}$
  have  a  chow limit $W^{kl}$, which  coincides with  the image of  the map $T_{kl,\infty}$. Thus  $T_{kl,\infty}$ maps $M_\infty$ onto $W^{kl}=T_{kl,\infty}
   (M_\infty)$.
We claim that $T_{(n+2+[\Lambda^2])l,\infty}$ is injective, so the proposition is proved.

 By Lemma \ref{separate},   for any $x,y\in M_\infty$,  there are $p_i\rightarrow x$ and $q_i\rightarrow y$, and $s_{x},s_{y}\in H^0(M_\infty, K_{M_i}^{-k_1l})$ for some $k_1$ such that
\begin{align}\label{disjoint-points}
|s_{x}|_{h_i}(p_i)=|s_{y}|_{h_i}(q_i)|=1~{\rm and}~ s_{x}(q_i)=s_{y}(p_i)=0.
\end{align}
This  means $T_{k_1l,\infty}(x)\neq T_{k_1l,\infty}(y)$. We further show that
 \begin{align}\label{assumption}
T_{(n+2+[\Lambda^2])l,\infty}(x)\neq T_{(n+2+[\Lambda^2])l,\infty}(y).
\end{align}
In fact, if (\ref {assumption}) is not true,  it is easy to see
 $T_{il,\infty}(x)=T_{il,\infty}(y)$ for any $i\leq n+2+[\Lambda^2]$.
Then by (\ref{cohomology}), it follows
$$T_{kl,\infty}(x)= T_{kl,\infty}(y),~\forall~ k, $$
  which is  contradict to (\ref{disjoint-points}).
Thus  (\ref{assumption}) is true.  Hence  $T_{(n+2+[\Lambda^2])l,\infty}$ must be injective.

\end{proof}

\begin{proof}[Proof of  Corollary \ref{algebaric-structure}]  By the  Gromov compactness theorem,  there exists a subsequence  $\{(M_{i_k},g^{i_k})\}$
of $\{(M_{i},g^{i})\}$,  which converges to $(M_\infty,g_\infty)$.   Then  i) and ii) in Corollary \ref{algebaric-structure}  follow  from  a generalized   Cheeger-Colding-Tian compactness theorem  for a sequence of  almost K\"{a}hler-Einstein  manifolds \cite{TW} ( or   a  sequence of Fano manifolds with  almost K\"{a}hler-Ricci solitons  \cite{WZ2}).  Thus we suffice  to prove the part iii).
By  Proposition \ref{variety}, we know that $M_\infty$ is homomorphic to an algebraic variety  $W^{k_0l}$,   where $k_0= n+2+[\Lambda^2]$.  We
  further  show  that  $W^{k_0l}$ is a   log terminal $Q$-Fano variety.

Let  $H^0(M_\infty, K_{M_\infty}^{-k_0l})$ be a space of  bounded holomorphic  sections of $K_{\mathcal{R}}^{-k_0l}$ with respect to the induced metric  $g_\infty$. Then for any compact set $K\subseteq \mathcal{R}\subseteq M_\infty$, we know that there are $t_K>0$  and $K_i\subseteq M_i$
  such that $(K_i,g_i(t_K))$ converge to $(K, g_\infty)$ smoothly. Thus by the argument in Proposition \ref{partial} and Lemma \ref{comparison},
we can identify $H^0(M_\infty, K_{M_\infty}^{-k_0l})$ with the limit of $H^0(M_i,K_{M_i}^{-k_0l})$.  But, from the proof in Proposition \ref{variety},
the latter is the same as $H^0(W^{k_0l},$
\newline $\mathcal{O}_{\mathbb CP^N}(1))$.
 This  implies that $M_\infty$  is homeomorphic to the  normalization of $W^{k_0l}$ since the  codimension of singularities of  $W^{k_0l}$ is at least  $2$ (cf. \cite{T5} and \cite{DS}). Hence  $W^{kl_0}$ is normal. By  \cite{BBEGZ}, it remains to prove that $W^{k_0l}$  is a  $Q$-Fano variety.

Let $\mathcal{S}= {\rm Sing}(M_\infty)$, $\mathcal{\hat S}=T_{k_0l,\infty}(\mathcal{ S} )$,  and let $ W_s\subset\mathcal{\hat S}$ be the singular set of   $W^{k_0l}$. Then both $ W_s$ and  $\mathcal{\hat S}$ lie in a subvariety of  $W^{k_0l}$  with codimension at least 2.   Thus we suffice  to prove that  $W_s=\mathcal{\hat S}$ since $(W^{k_0l}, \mathcal{O}_{\mathbb CP^N}(1)) =$
$\newline K^{- k_0l}_{W^{k_0l}\setminus\mathcal{\hat S}}$.   In the following, we give
 a  proof  for the general limit K\"ahler-Ricci soliton  $(M_\infty, g_\infty)$  in Section 7 by using PDE method as in \cite{DS}.  Namely,  $g_\infty$ satisfies
an equation,
\begin{align}\label{kr-soliton-limit} {\rm Ric}(g_\infty)-g_\infty-L_{X_\infty}g_\infty=0,~{\rm in }~M_\infty\setminus\mathcal{S},
\end{align}
where $X_\infty$ is the limit holomorphic vector field  of $(M_i,X_i)$ on $M_\infty\setminus\mathcal{S}$   \cite{WZ2}.

 On contrary, we suppose that  $W_s\neq\mathcal{\hat S}$.  Then  there exists some $x\in\mathcal{S}$ such that  $p=T_{k_0l,\infty}(x)\in W^{k_0l}\setminus W_s $,  a  smooth point in  $W^{k_0l}$.  Thus  there exists a small ball $B$ around $p$ in  $W^{k_0l}$ with the standard holomorphic coordinates such that  the induced K\"ahler form  $\omega_0=\frac{1}{k_0l}\omega_{g_{FS}}$ by  the Fubini-Study metric $g_{KS}$ of the projective space  is smooth on $B$.
We may assume that $\omega_{0}=\sqrt{-1}\partial\bar{\partial} v$ for some K\"ahler potential $v$ on $B$.

Let $\rho_\infty$ be the limit of $\rho_{k_0l}(M_i,g^i)$ (perhaps replaced by a subsequence of $\rho_{k_0l}(M_i,g^i)$) on
$(M_\infty\setminus\mathcal{ S}  , g_\infty)$. Then  $\rho_\infty$ and $|\nabla \rho_\infty|_{g_\infty}$ are both  uniformly bounded since  $\rho_{k_0l}(M_i,g^i)$ and $|\nabla \rho_{k_0l}(M_i,g^i)|_{g^i}$ are all    uniformly bounded by (\ref{section-estiamte-1}).   Clearly, $\rho_\infty$ satisfies
 $$\omega_{g_\infty}=\omega_{0}+\sqrt{-1}\partial\bar{\partial}\rho_\infty, ~{\rm in }~W^{k_0l}\setminus \mathcal{\hat S}.$$
Let $u=v+\rho_\infty$. Then by (\ref{kr-soliton-limit}),  we see that $u$ satisfies
 \begin{align}
\sqrt{-1}\partial\bar{\partial}(\log {\rm det} (u_{i\bar{j}})+X_\infty(u)+u)=0, ~{\rm in }~ B\setminus \mathcal{\hat S}.\notag
\end{align}
It follows
\begin{align}\label{local-equation}
\log {\rm det} (u_{i\bar{j}})+X_\infty(u)+u=const., ~{\rm in }~ B\setminus \mathcal{\hat S}.
\end{align}

 We claim that there  exists a  uniform $C$ such that
\begin{align}\label{equivalence-metric}C^{-1}\delta_{i\overline j}\le u_{i\overline j}\le C\delta_{i\overline j}, ~{\rm in } B\setminus \mathcal{\hat S}.
\end{align}

 Since  the  basis  in  $H^0(M_\infty, K_{M_\infty}^{-k_0l})$,  which gives the embedding $T_{k_0l,\infty}$,  is uniformly $C^1$-bounded, we have
$$\omega_{0}\le C\omega_{g_\infty}, ~{\rm in }~M_\infty.$$
On the other hand, by (\ref{gradient-theta}),
$$|X_\infty(\rho_\infty)|\le|X_\infty|_{g_\infty}|\nabla \rho_\infty|_{g_\infty}\le C,~{\rm in }~M_\infty. $$
Then $X_\infty(u)$ is uniformly bounded.
Thus by (\ref{local-equation}), we see that
$\log {\rm det} (u_{i\bar{j}})$ is uniformly positive and bounded. This implies (\ref{equivalence-metric}).

By the above claim,  we can apply the following lemma to show that   $u$ is a smooth function in a small neighborhood of  $p$.  But
this is impossible by $x\in \mathcal{ S}$. Hence  $W^{k_0l}$ must be  a $Q$-Fano variety.

\end{proof}

\begin{lem}  Let $u$ be a smooth solution of (\ref{local-equation}) in $B\setminus \mathcal{\hat S}$, where $B$ is a ball in the euclidean space in $\mathbb C^n$ and $\mathcal{\hat S}$ is a closed subset in $\mathbb C^n$  with real dimension less than $2n-1$. Suppose that $u$ satisfies (\ref{equivalence-metric}).
Then $u$  can be extended to  a smooth function on $\frac{1}{4}B$.
\end{lem}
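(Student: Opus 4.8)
The plan is to first extend $u$ across $\mathcal{\hat S}$ to a bounded plurisubharmonic function $\tilde u$ on all of $B$, then check that $\tilde u$ is a weak (Bedford--Taylor) solution of the complex Monge--Amp\`ere equation underlying (\ref{local-equation}) on $\frac12 B$, and finally run an interior elliptic bootstrap that is powered by the uniform ellipticity coming from (\ref{equivalence-metric}). For the first step I would note that, by (\ref{equivalence-metric}), both $u-\frac{1}{2C}|z|^2$ and $\frac{C}{2}|z|^2-u$ are plurisubharmonic on $B\setminus\mathcal{\hat S}$; being locally bounded above there and $\mathcal{\hat S}$ being a closed set of real dimension $<2n-1$ (hence of zero $(2n-1)$-Hausdorff measure, so removable for plurisubharmonic functions with a uniform upper bound), each extends plurisubharmonically to $B$. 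Comparing the two extensions on $B\setminus\mathcal{\hat S}$, where their sum is the smooth function $(\frac C2-\frac1{2C})|z|^2$, forces each to be locally bounded, so $u$ extends to a bounded plurisubharmonic $\tilde u$ on $B$ with $\tfrac{1}{2C}\,\omega_{\mathrm{eucl}}\le \sqrt{-1}\partial\bar\partial\tilde u\le \tfrac{C}{2}\,\omega_{\mathrm{eucl}}$ as currents on $B$.

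Next I would argue that the two-sided current bound forces $(\sqrt{-1}\partial\bar\partial\tilde u)^n$ to be absolutely continuous with density bounded above and below: apply the positivity of the mixed Monge--Amp\`ere terms to $\sqrt{-1}\partial\bar\partial\tilde u$ and to $\tfrac{C}{2}\,\omega_{\mathrm{eucl}}-\sqrt{-1}\partial\bar\partial\tilde u$. In particular this measure gives no mass to the Lebesgue-null set $\mathcal{\hat S}$. Since $\tilde u$ is bounded it lies in $W^{1,2}_{\mathrm{loc}}$, so $X_\infty(\tilde u)$ is defined almost everywhere, and on $B\setminus\mathcal{\hat S}$ equation (\ref{local-equation}) reads $(\sqrt{-1}\partial\bar\partial\tilde u)^n=e^{\,c-\tilde u-X_\infty(\tilde u)}\,\omega_{\mathrm{eucl}}^n$; as both sides charge no mass on $\mathcal{\hat S}$, this identity holds on $\tfrac12 B$, and the density bound shows $\psi:=c-\tilde u-X_\infty(\tilde u)\in L^\infty(\tfrac12 B)$.

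The bootstrap then proceeds as in \cite{DS}. On $\tfrac12 B$, $\tilde u$ is a bounded plurisubharmonic solution of $(\sqrt{-1}\partial\bar\partial\tilde u)^n=e^{\psi}\,\omega_{\mathrm{eucl}}^n$ with $\psi\in L^\infty$ and with complex Hessian pinched between two positive multiples of the identity, which makes the operator $v\mapsto\log\det(v_{i\bar j})$ uniformly elliptic and concave at $\tilde u$. The complex analogue of Caffarelli's interior estimates then gives $\tilde u\in C^{1,\alpha}_{\mathrm{loc}}$ for some $\alpha\in(0,1)$; since $X_\infty$ is holomorphic on $B$, hence smooth, this yields $\psi\in C^{\alpha}_{\mathrm{loc}}$, and the Caffarelli--Schauder / Evans--Krylov estimate upgrades $\tilde u$ to $C^{2,\alpha}_{\mathrm{loc}}$. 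Differentiating (\ref{local-equation}) and iterating the Schauder estimates for the resulting linear uniformly elliptic equation with H\"older coefficients gives $\tilde u\in C^\infty(\tfrac14 B)$, which is the assertion with $u=\tilde u$.

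The hard part will be the regularity step: (\ref{equivalence-metric}) controls only the complex Hessian of $\tilde u$, not its full real Hessian, so Evans--Krylov cannot be invoked at the outset and one cannot short-circuit the argument; one must first extract $C^{1,\alpha}$ regularity from uniform ellipticity alone — this is exactly where the complex version of Caffarelli's theory (and the strict plurisubharmonicity it provides) is needed — and only afterwards does the drift term $X_\infty(\tilde u)$ become a genuinely H\"older lower-order term permitting the higher-order bootstrap. A secondary point requiring care is the weak-solution step: that the Monge--Amp\`ere measure of the bounded extension charges no mass on $\mathcal{\hat S}$, which is precisely where the hypothesis that $\mathcal{\hat S}$ has real dimension $<2n-1$ is used.
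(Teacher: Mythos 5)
Your first step --- extending $u$ across $\mathcal{\hat S}$ to a bounded function $\tilde u$ with $\frac{1}{2C}\,\sqrt{-1}\partial\bar\partial|z|^2\le \sqrt{-1}\partial\bar\partial\tilde u\le \frac{C}{2}\,\sqrt{-1}\partial\bar\partial|z|^2$ in the sense of currents, whence $\Delta\tilde u\in L^\infty(B)$ and $\tilde u\in W^{2,p}_{\rm loc}\cap C^{1,\alpha}_{\rm loc}$ for every $p<\infty$ --- is correct, and is in fact more explicit than the paper, which only implicitly invokes the removability of $\mathcal{\hat S}$ when it treats its difference quotient as a weak subsolution ``in whole $\frac{3}{4}B$''. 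Note, however, that the $C^{1,\alpha}$ regularity at this stage comes for free from $\Delta\tilde u\in L^\infty$ via Calder\'on--Zygmund; no Caffarelli-type theorem is involved, and attributing it to one obscures where the real difficulty sits.

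The genuine gap is the passage from $C^{1,\alpha}\cap W^{2,p}$ to $C^{2,\alpha}$. The bound (\ref{equivalence-metric}) controls only the complex Hessian $(u_{i\bar j})$, so as a function of the full real Hessian the operator $M\mapsto\log\det\bigl(M^{(1,1)}\bigr)$ is only degenerate elliptic, and neither Evans--Krylov nor Caffarelli's interior $C^{2,\alpha}$ theory for concave uniformly elliptic $F(D^2u)=\psi$ applies as you have set things up: both require either an a priori bound on the \emph{full real} Hessian or a viscosity-solution framework for a uniformly elliptic extension of the operator, and ``the complex analogue of Caffarelli's interior estimates'' is not an off-the-shelf theorem (interior regularity for complex Monge--Amp\`ere with merely bounded positive density fails in general; it is precisely the two-sided complex Hessian bound that must be exploited by hand). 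You flag this as ``the hard part'' but then assert rather than prove it. The paper closes exactly this gap by an elementary argument: for a unit vector $v$ it forms the second incremental quotient $w_\epsilon(x)=\epsilon^{-2}\bigl(u(x+\epsilon v)+u(x-\epsilon v)-2u(x)\bigr)$, uses concavity of $\log\det$ to show $w_\epsilon$ is a weak subsolution of the divergence-form, uniformly elliptic inequality (\ref{interpo-2}) on all of $\frac{3}{4}B$ (this is where the removability of $\mathcal{\hat S}$ and the holomorphic extension of $X_\infty$ enter), and applies the local maximum principle (Theorem 8.17 of \cite{GT}, adapted to difference quotients on the right-hand side) to bound $\sup w_\epsilon$ by $\|w_\epsilon\|_{L^p}$, which is finite because $u\in W^{2,p}$. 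Letting $\epsilon\to 0$ gives a uniform upper bound on $u_{vv}$ for every $v$, and (\ref{equivalence-metric}) then yields the lower bound, i.e.\ a genuine interior $C^{1,1}$ estimate; only at that point does Evans--Krylov (Theorem 17.14 of \cite{GT}) apply, with $-u-X_\infty(u)$ Lipschitz, and the Schauder bootstrap finishes. To repair your argument you must either reproduce this $C^{1,1}$ step or cite a specific theorem producing $C^{2,\alpha}$ from $\Delta u\in L^\infty$ plus a H\"older density for complex Monge--Amp\`ere; as written, the chain ``$C^{1,\alpha}$ plus uniform ellipticity of the complex Hessian $\Rightarrow$ Evans--Krylov'' does not close.
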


\begin{proof}
 By the Schaulder estimate for the  equation (\ref{local-equation}), we suffices  to get  a  $C^{2,\alpha}$-regularity of $u$ in $\frac{1}{4}B$.  We  first  do  the $C^{1,1}$-estimate.

 For any $0<\epsilon< \frac{1}{8}$ and any unit vector  $v$, we let the difference quotient
 $$w=w_\epsilon=\frac{u(x+\epsilon v)+u(x-\epsilon v)-2u(x)}{\epsilon^2}.$$
  Then  by the convexity of $\log {\rm det}$,  we  get from (\ref{local-equation}),
\begin{align}\label{interpo-1}
u^{i\bar{j}}w_{i\bar{j}}\geq e^g \frac{g(x+\epsilon v)+g(x-\epsilon v)-2g(x)}{\epsilon^2},
\end{align}
where $g=-u-X_\infty(u)$.  Denote $(a_{\alpha\beta})$ to be the  $2n\times 2n$ matrix of Riemannian metric of $g_\infty$
and $(a^{\alpha\beta})={\rm det}(a_{\delta\gamma})(a_{\alpha\beta})^{-1}.$  It is clear that (\ref{interpo-1}) is equivalent to
\begin{align}\label{interpo-2}(a_{\alpha\beta}w_\beta)_\alpha \geq l(x)+\frac{h(x+\epsilon v)-h(x)}{\epsilon}, ~{\rm in }~\frac{3}{4}B\setminus\mathcal{\hat S},
\end{align}
where $l=f(x+\epsilon v)\frac{e^g (x+\epsilon v)-e^g(x)}{\epsilon}, h=e^gf$ and $f=\frac{g(x)-g(x-\epsilon v)}{\epsilon}$.
 Note that $X_\infty$ can be extended to a holomorphic vector field on $B$. Then by (\ref{equivalence-metric}), $w$ can be regarded as a weak sub-solution in  (\ref{interpo-2}) in whole $\frac{3}{4}B$.  Thus by  the $L^\infty$-estimate  arising from the  Moser iteration,  we have,
\begin{align}\label{harnack}
\sup_{\frac{1}{2}B}(w_\epsilon ) \leq C(|w_\epsilon|_{L^p(\frac{3}{4}B)}+|l|_{L^\frac{q}{2}(B)}+|h|_{L^q(B)}),
\end{align}
where $C$ depends  only on  $(a_{\alpha\beta})$,  $p\ge 1$ and $q>2n$.
In fact, by  Theorem  8.17 in \cite{GT}, the estimate (\ref{harnack}) holds for sub-solution $w$ as follows,
$$(a_{\alpha\beta}w_\beta)_\alpha\ge l+ <v, Dh>.$$
  But   Theorem  8.17 is also true when  the term $<v, Dh>$ is  replaced by the difference quotient $\frac{h(x+\epsilon v)-h(x)}{\epsilon}$.

Since $g$ is uniformly  Lipschitz in $B\setminus\mathcal{\hat S}$, $l,h$ are $L^\infty$-functions in $B$. On the other hand, by (\ref{equivalence-metric}),  $u\in W^{2,p}(\frac{3}{4}B)$ for any $p\ge 1$, and  so $|w_\epsilon|_{L^p(\frac{3}{4}B)}$ is uniformly bounded.  Thus the (\ref{harnack}) implies that $w_\epsilon$ is  uniformly  bounded  above.  As a consequence,  $C^{1,1}$-derivative $u_{vv}$  is uniformly  bounded above. By (\ref{equivalence-metric}),  we can also get a uniform  lower  bound  of  $u_{vv}$. Hence $C^{1,1}$-norm of $u$ is uniformly  bounded in $\frac{1}{2}B$.

  Next to get $C^{2,\alpha}$-estimate of $u$ in (\ref{local-equation}), we can apply  Evans-Krylov theorem,  Theorem 17.14 in \cite{GT} to $C^{1,1}$-solution of (\ref{local-equation}) in $\frac{1}{2}B$ directly.  This is because   (\ref{local-equation}) is strictly elliptic in $B$ and $-u-X_\infty(u)$ is Lipschitz. Thus the lemma is proved.

\end{proof}

\vskip3mm

\section{Conclusion}

In the proofs  of   Theorem \ref{almost-ke-1} and  Theorem \ref{almost-ks},  the constants $c_l$ in the estimates (\ref{almost-ke-1})  and (\ref{almost-ks}) may depend on
the limit $(M_\infty, g_\infty)$.  In this section,  we   show that $c_l$ just depends on $n, l_0$ and $l$, and  the  geometric  uniform constants  $\Lambda$ and $D$  in  $i)$ of (\ref{almost-ke-condition}), or  the  constants $ \Lambda,  D, C_0$ and $ B$ in (\ref{condition-ks}) and $iii)$ of  (\ref{further-condition-2}).    Thus we  complete  the proof of Theorem \ref{main-theorem-wang-zhu}.  For simplicity, we just consider the case of  almost K\"{a}hler-Einstein  Fano manifolds below.

Set   a class  of  Fano manifolds by
$$\mathcal{K}_{\Lambda,D}=\{(M^n,g)|~\omega_g\in 2\pi c_1(M),  {\rm Ric }(g)\geq - (n-1)\Lambda^2, {\rm diam }(M, g)\leq D\}.$$
It is known that $\mathcal{K}_{\Lambda,D}$ is precompact in Gromov-Hausdorff topology.  Moreover, by Cheeger-Colding theory  in \cite{CC}, any Gromov-Hausdorff limit
$M_\infty$  in  $\mathcal{K}_{\Lambda,D}$  contains   singularities  with   codimension  at least 2 and each  tangent cone  at  $x\in M_\infty$  is a metric cone $C_x$, which  also contains   singularities  with   codimension  at least 2.

Let $\mathcal{K}_{\Lambda,D}^0$ be a subset of $\mathcal{K}_{\Lambda,D}$ such that ${\mathcal H}^{2n-2}( {\rm Sing}(C_x))=0$ for any $x\in  M_{\infty}$,  where $M_\infty$ is any  Gromov-Hausdorff limit  in $\mathcal{K}_{\Lambda,D}^0$.   Then according  to the proofs in  Proposition \ref{partial} and Theorem \ref{almost-ke-1}, we have

\begin{prop} Let $(M,g)\in\mathcal{K}_{\Lambda,D}^0$ and $g_t$ a solution of (\ref{Ricci flow}) with the initial metric $g$.
Then there exist   a small number $\delta=\delta(\Lambda, D,n)$ and  a large integer  $ l_0=l_0(n, \Lambda, D)$ such that  the following is true:
  if  $g$  satisfies
\begin{align}\label{integer-condition-l1}\int_0^1\int_M|R-n|{\rm dv}_{g_t} dt\leq \delta,
\end{align}
then for any  integer $l$  there exists  a uniform constant  $c=c(n, l,\Lambda, D)>0$  such that
\begin{align}\label{pe}
\rho_{ll_0}(M,g)\geq c.
\end{align}

\end{prop}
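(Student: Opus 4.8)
\noindent\emph{Proof strategy.} The plan is a compactness argument over the Gromov--Hausdorff closure $\overline{\mathcal K_{\Lambda,D}^0}$ of $\mathcal K_{\Lambda,D}^0$, feeding in Theorem \ref{almost-ke-1}. First I would reduce to the case $l=1$. Suppose $\delta$, $l_0$ and $c_1>0$ have been found so that $\rho_{l_0}(M,g)\ge c_1$ for every $(M,g)\in\mathcal K_{\Lambda,D}^0$ satisfying (\ref{integer-condition-l1}) with this $\delta$. For such an $(M,g)$ and any $q\in M$, the number $N_0+1=\dim H^0(M,K_M^{-l_0})$ is bounded by a constant depending only on $n,\Lambda,D$: indeed $\dim H^0(M,K_M^{-l_0})=\int_M\rho_{l_0}\,{\rm dv}_g\le\sup_M\rho_{l_0}\cdot{\rm vol}(M,g)$, and both factors are uniformly controlled, by Remark \ref{remark-1} applied with $u\equiv0$ and by the Bishop--Gromov volume bound. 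Hence some element $s$ of an orthonormal basis has $|s|_h^2(q)\ge c_1/(N_0+1)$, while $\sup_M|s|_h^2\le H=H(n,\Lambda,D)$ again by Remark \ref{remark-1}. Then $s^{\otimes l}\in H^0(M,K_M^{-ll_0})$ satisfies $|s^{\otimes l}|_h^2(q)\ge(c_1/(N_0+1))^{l}$ and $\|s^{\otimes l}\|_{L^2}^2\le H^{l-1}$, so after normalization $\rho_{ll_0}(M,g)\ge c_l:=(c_1/(N_0+1))^{l}H^{-(l-1)}>0$, a constant of the required shape. So it suffices to produce $\delta,l_0,c_1$ for $l=1$.

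For the $l=1$ case I would work on $\overline{\mathcal K_{\Lambda,D}^0}$, which is compact by Gromov precompactness and every member of which is a Gromov--Hausdorff limit of a sequence in $\mathcal K_{\Lambda,D}^0$, hence has metric-cone tangent cones $C_x$ with $\mathcal H^{2n-2}({\rm Sing}(C_x))=0$ --- exactly the setting of Sections 4--6. For each $X\in\overline{\mathcal K_{\Lambda,D}^0}$, the proof of Theorem \ref{almost-ke-1} (through Proposition \ref{partial} and Lemma \ref{comparison}) produces, by a construction depending only on the isometry class of $X$ (a finite ball-cover of $X$ and the pointwise data of Proposition \ref{partial}), an integer $\hat l_0(X)$ such that $\rho_{\hat l_0(X)}(M_i,g^i)\ge c_1>0$ for $i$ large, for \emph{any} sequence in $\mathcal K_{\Lambda,D}^0$ Gromov--Hausdorff converging to $X$ and satisfying conditions i) and iii) of (\ref{almost-ke-condition}); condition ii) enters that proof only through the structural results of \cite{TW}, and since membership in $\mathcal K_{\Lambda,D}^0$ already supplies the needed information on tangent cones, conditions i) and iii) suffice for the section construction. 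From this I would extract a neighbourhood statement: there are $\epsilon(X)>0$ and $c(X)>0$ with $\rho_{\hat l_0(X)}(M,g)\ge c(X)$ for all $(M,g)\in\mathcal K_{\Lambda,D}^0$ such that $d_{GH}((M,g),X)<\epsilon(X)$ and $\int_0^1\int_M|R-n|\,{\rm dv}_{g_t}\,dt<\epsilon(X)$. Were this false, instantiating the failure with $\epsilon=c=1/m$ yields a sequence $(M_m,g^m)\in\mathcal K_{\Lambda,D}^0$ converging to $X$, with $\int_0^1\int_{M_m}|R-n|\,{\rm dv}_{g^m_t}\,dt\to0$ and $\rho_{\hat l_0(X)}(M_m,g^m)<1/m$; but this sequence satisfies i) and iii) of (\ref{almost-ke-condition}), so the previous sentence forces $\rho_{\hat l_0(X)}(M_m,g^m)\ge c_1>0$ for $m$ large, a contradiction.

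Then I would cover $\overline{\mathcal K_{\Lambda,D}^0}$ by the balls $\{d_{GH}(\,\cdot\,,X)<\epsilon(X)\}$, pass to a finite subcover centred at $X_1,\dots,X_p$, and put $l_0=\mathrm{lcm}(\hat l_0(X_1),\dots,\hat l_0(X_p))$, $\delta=\min_a\epsilon(X_a)$, and $c_1=\min_a c_1(X_a)$, where $c_1(X_a)>0$ is the constant obtained by raising the section realizing $\rho_{\hat l_0(X_a)}(\,\cdot\,)\ge c(X_a)$ to the power $l_0/\hat l_0(X_a)$ as in the first paragraph. Given $(M,g)\in\mathcal K_{\Lambda,D}^0$ with (\ref{integer-condition-l1}) for this $\delta$, it lies in some ball $\{d_{GH}(\,\cdot\,,X_a)<\epsilon(X_a)\}$ and has $\int_0^1\int_M|R-n|\,{\rm dv}_{g_t}\,dt\le\delta\le\epsilon(X_a)$, so $\rho_{\hat l_0(X_a)}(M,g)\ge c(X_a)$ and hence $\rho_{l_0}(M,g)\ge c_1$; combined with the first paragraph this gives (\ref{pe}) for every $l$, with $\delta,l_0$ depending only on $n,\Lambda,D$ and $c_l$ only on $n,l,\Lambda,D$.

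The hard part is the assertion in the second paragraph that the integer $\hat l_0$ and the constants delivered by Theorem \ref{almost-ke-1} are genuinely intrinsic to the limit $X$, and that the conclusion can be promoted from a statement about one convergent sequence to a statement about a whole Gromov--Hausdorff neighbourhood of $X$ inside $\mathcal K_{\Lambda,D}^0$. Concretely one must revisit Sections 4--6 and check that the finite cover, the tangent-cone approximation (Proposition \ref{almost-falt-bundle}), the choice of the small time $t_x$, and the $\bar\partial$-estimates all have their parameters controlled by the geometry of $X$ together with the uniform constants $n,\Lambda,D$; once this uniformity is secured, the contradiction/compactness bookkeeping above is routine. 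A secondary point is verifying that condition ii) of (\ref{almost-ke-condition}) is dispensable here, so that the sequences manufactured in the contradiction steps are legitimate inputs to Theorem \ref{almost-ke-1}.
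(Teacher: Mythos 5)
Your proposal is correct and follows essentially the same route as the paper: a pointwise application of Theorem \ref{almost-ke-1} at each point of the Gromov--Hausdorff closure, upgraded (implicitly by contradiction in the paper, explicitly in your write-up) to a neighbourhood statement with data $\delta_Y, l_Y, c_Y$, followed by a finite subcover of the compact closure and taking the product (you use the lcm, an immaterial difference) of the $l_Y$'s and the minimum of the $\delta_Y$'s and $c_Y$'s. Your version is somewhat more careful than the paper's on two points it leaves implicit --- the tensor-power reduction to $l=1$ and the dispensability of condition ii) of (\ref{almost-ke-condition}) --- but the underlying argument is the same.
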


\begin{proof}
 By  Theorem \ref{almost-ke-1}, we  see  that for any $Y \in \bar{\mathcal{K}}_{\Lambda,D}^0$,  there exist a small number  $\delta_Y>0$, a large integer $ l_Y$ and
 a uniform  constant $c_Y>0$ such that if $M\in \mathcal{K}_{\Lambda,D}$ satisfies
\begin{align}
 {\rm d}_{GH}((M,g),(Y, g_Y))\leq \delta_Y,\int_0^1\int_M|R-n|{\rm dv}_{g_t} dt\leq \delta_Y,\notag
\end{align}
then $$\rho_{l_Y}(M,g)\geq c_Y.$$
 Since $\bar{\mathcal{K}}_{\Lambda,D}$ is compact, we can cover it by finite  balls $B_{Y_i}(\delta_{Y_i})(1\leq i\leq N)$  in Gromov-Hausdroff topology.
 Putting $l_0=\Pi l_{Y_i}, \delta=\min\{\delta_{Y_i}\}$ and $c=\min\{c_{Y_i}\}$. Then we get (\ref{pe})  for $l=1$,   if $(M,g)$ satisfies (\ref{integer-condition-l1}).
 (\ref{pe}) is also true for general $l$ as in the proof of  Theorem \ref{almost-ke-1}.
\end{proof}

 (\ref{lower-bound-bergman-tian}) in
Theorem  \ref{main-theorem-wang-zhu} follows from (\ref{pe}).

\vskip3mm
\section{Appendix }

In this appendix,  we use the following  Siu's lemma  to generalize the finite generation formula  (\ref{cohomology}) under the Bakry-Em\'ery Ricci curvature condition \cite{Si}.

\begin{lem}
Let $(M^n,g)$ be a compact complex manifold, $G$ a holomorphic line bundle, $E$ a holomorphic line bundle with a hermitian metric $e^{-\psi}$
 whose Ricci curvature is positive. Let  $\{s_i \}_{1\leq i\leq p}$ be a basis of  $ H^0(M,G)$ and $|s|^2=\Sigma_{i=1}^p |s_i|^2$.   Then for any  $f\in H^0(M,(n+k+1)G+E+K_M)$ which  satisfies
$$ \int_M\frac{|f|^2e^{-\psi}}{|s|^{2(n+k+1)}} d{\rm v}_g< +\infty,$$
 there are some $h_i\in H^0(M,(n+k)G+E+K_M)$ ($k\geq 1$)  such that $f=\Sigma_{i=1}^p  h_j\otimes s_j$ and  each $h_i$ satisfies
$$  \int_M\frac{|h_j|^2e^{-\psi}}{|s|^{2(n+k)}} d{\rm v}_g\leq \frac{n+k}{k} \int_M\frac{|f|^2e^{-\psi}}{|s|^{2(n+k+1)}}d{\rm v}_g.$$
\end{lem}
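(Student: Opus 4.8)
The plan is to recognize this statement as an instance of Skoda's $L^2$ division theorem (in the form used by Siu) and to prove it directly with H\"ormander's $L^2$-existence theorem for $\bar\partial$, applied with the singular weight $e^{-\psi}/|s|^{2(n+k)}$; concretely one regularizes this to $e^{-\psi}/(|s|^2+\varepsilon)^{n+k}$, runs the estimate on the compact $M$ with bounds uniform in $\varepsilon$, and then lets $\varepsilon\to0$. Throughout write $N=(n+k)G+E+K_M$ for the target bundle and $Z=\{s=0\}$ for the base locus, and recall the hypothesis that the curvature $i\Theta(E,e^{-\psi})$ is positive.

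First I would recast ``dividing $f$ by the $s_j$'' as a constrained $\bar\partial$-equation. On $M\setminus Z$ put $u=(u_1,\dots,u_p)$ with $u_j=\bar s_j f/|s|^2$, a smooth section of $\mathbb{C}^p\otimes N$; then $\sum_j u_j s_j=f$, the vector $u$ is pointwise proportional to $\bar s=(\bar s_1,\dots,\bar s_p)$, and a direct computation gives $\int_M|u|^2 e^{-\psi}/|s|^{2(n+k)}=\int_M|f|^2 e^{-\psi}/|s|^{2(n+k+1)}<\infty$. Since $f$ and the $s_j$ are holomorphic, $w:=\bar\partial u$ is a $\bar\partial$-closed $N$-valued $(0,1)$-form with $\sum_j w_j s_j=\bar\partial f=0$, so $w$ is valued in the holomorphic sub-bundle $\ker(s)\subset\mathcal{O}_M^{\oplus p}$, which over $M\setminus Z$ is exactly the orthogonal complement of the line $\mathbb{C}\bar s$. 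As $\ker(s)$ is holomorphic, $\bar\partial$ preserves $\ker(s)\otimes N$-valued forms, so one may seek $v$ valued in $\ker(s)\otimes N$ with $\bar\partial v=w$. Granting such a $v$ with $\int_M|v|^2 e^{-\psi}/|s|^{2(n+k)}\le\frac{n}{k}\int_M|u|^2 e^{-\psi}/|s|^{2(n+k)}$, the section $h:=u-v$ is holomorphic, satisfies $f=\sum_j h_j s_j$, and---$u$ and $v$ being pointwise orthogonal, one lying in $\mathbb{C}\bar s$ and the other in $\ker(s)$---obeys $\int_M|h|^2 e^{-\psi}/|s|^{2(n+k)}=\int_M|u|^2 e^{-\psi}/|s|^{2(n+k)}+\int_M|v|^2 e^{-\psi}/|s|^{2(n+k)}\le\frac{n+k}{k}\int_M|f|^2 e^{-\psi}/|s|^{2(n+k+1)}$. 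Since $|h_j|^2\le|h|^2=\sum_i|h_i|^2$, each $h_j$ satisfies the asserted bound, and because the whole construction is carried out over the compact $M$ with the regularized weight, the $h_j$ come out as honest holomorphic sections of $N$ over all of $M$.

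The heart of the matter is the weighted $L^2$-estimate for $\bar\partial$ on $\ker(s)\otimes N$-valued $(0,1)$-forms, and this is where Skoda's idea enters. I would put on $(n+k)G+E$ the Hermitian metric built from $e^{-\psi}$ on $E$ together with the regularized factor $(|s|^2+\varepsilon)^{-(n+k)}$, and invoke the Bochner--Kodaira--Nakano inequality for $(n,1)$-forms twisted by $\big((n+k)G+E\big)\otimes\ker(s)$. The obstruction is that the curvature contributed by the factor $|s|^{-2(n+k)}$ need not be positive, and $\ker(s)$, being a sub-bundle of a flat bundle, has non-positive curvature as well; Skoda's pointwise linear-algebra lemma is exactly the assertion that, once the curvature operator $[i\Theta,\Lambda]$ is restricted to the $(0,1)$-forms that actually occur here---those valued in $\ker(s)$ and built from $\bar\partial(\bar s/|s|^2)$---these two unfavorable contributions combine with the strict positivity of $i\Theta(E,e^{-\psi})$ to yield a lower bound for $[i\Theta,\Lambda]$ of order $1/k$ against a positive term, from which $\langle[i\Theta,\Lambda]^{-1}w,w\rangle\le\frac{n}{k}|u|^2$ holds pointwise (in the relevant metrics), uniformly in $\varepsilon$. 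The coefficient $n/k$ is precisely what the exponents $n+k$ and $n+k+1$ in the statement are arranged to produce; carrying $q=\min(n,p-1)$ through the same argument instead of $n$ gives the same conclusion, with an even smaller constant, when $p\le n$. I expect this curvature computation---Skoda's algebraic inequality for $[i\Theta,\Lambda]$ on the constrained forms, uniform in $\varepsilon$---to be the main obstacle; the remaining ingredients (the reduction to $\bar\partial$ above, the standard Hilbert-space production of the minimal solution $v$ from the a priori estimate, and the passage to the $\varepsilon\to0$ limit via weak compactness of uniformly $L^2$-bounded holomorphic sections on the compact $M$) are routine, and the regularization keeps the weight smooth on all of $M$ so that nothing special is needed near $Z$.
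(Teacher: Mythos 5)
The paper offers no proof of this lemma at all: it is quoted as ``Siu's lemma'' with a citation to \cite{Si}, so there is no in-paper argument to compare yours against. Your proposal is the standard (and essentially the only known) route to this statement, namely Skoda's $L^2$ division theorem in the global, line-bundle-twisted form used by Siu. The reduction is correctly set up: taking $u_j=\bar s_j f/|s|^2$ gives $\sum_j u_js_j=f$ with $|u|^2=|f|^2/|s|^2$, so $\int_M|u|^2e^{-\psi}|s|^{-2(n+k)}=\int_M|f|^2e^{-\psi}|s|^{-2(n+k+1)}$; the correction $v$ is sought with values in $\ker(s)\otimes N$, pointwise orthogonal to $u$, so that $h=u-v$ is holomorphic and $\int|h|^2=\int|u|^2+\int|v|^2\le(1+\tfrac{n}{k})\int|u|^2=\tfrac{n+k}{k}\int|f|^2e^{-\psi}|s|^{-2(n+k+1)}$. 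The constants and bundle degrees all check out, and the $\varepsilon$-regularization of the weight together with weak compactness is the standard way to handle the base locus.

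The genuine gap is the one you yourself flag: the Skoda a priori estimate $\langle[i\Theta,\Lambda]^{-1}w,w\rangle\le\frac{q}{k}|u|^2$ (with $q=\min(n,p-1)$) for $(n,1)$-forms constrained to $\ker(s)\otimes N$ is asserted but not proved, and it is the entire analytic content of the theorem --- everything else in your outline is formal. Without carrying out Skoda's pointwise linear-algebra inequality (showing that the negative curvature contributions of the singular weight $|s|^{-2(n+k)}$ and of the sub-bundle $\ker(s)$ are dominated, on the relevant forms, by the term coming from $i\Theta(E,e^{-\psi})>0$ with the stated constant), the proposal is a correct blueprint rather than a proof. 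Two smaller points deserve care if you execute it: $\ker(s)$ is a locally free sub-bundle only on $M\setminus Z$, so the orthogonality $u\perp v$ and the curvature computation live on $M\setminus Z$ and one must argue that the limiting $h_j$ extend holomorphically across $Z$ (standard, since $Z$ is analytic of positive codimension and the $h_j$ are locally $L^2$); and the claim that ``nothing special is needed near $Z$'' is only true after this extension step is made explicit.
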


\begin{prop}\label{finite}
Let $(M,g)$ be a K\"{a}hler manifold
with $${\rm Ric}(g)+{\rm Hess}\, u \geq -Cg,$$ where
$X=\nabla u$ is a holomorphic vector field and $|u|\leq A$. Assume  that
\begin{align}\label{partial assumption}
c'\geq \rho_l(M,g)\geq c>0
\end{align} for some $l\in \mathbb{N}$.  Then for any $s\in H^0(M,K_M^{-m})$ with $m\geq (n+2)l+C+1$, there are $u_i\in H^0(M,K_M^{-(m-l)})$
such that $s=\Sigma_{i=0}^Nu_i\otimes s_i$, where $\{s_i\}$  is an orthonormal basis of $H^0(M,K_M^{-l})$.  Moreover,  each $u_i$ satisfies
\begin{align}\label{effective}
\int_M |u_i|^2_{h^{\otimes m-l}}{\rm dv }_g\leq (n+1)e^{2A}(\frac{c'}{c})^{\frac{m}{l}}\int_M |s|^2_{h^{\otimes m}}{\rm dv }_g.
\end{align}

\end{prop}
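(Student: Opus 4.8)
The plan is to read Proposition~\ref{finite} off from Siu's lemma above, applied with $G=K_M^{-l}$ and $E=K_M^{-a}$ for a suitable $a$ and with $k=1$. First I would match the bundles: in Siu's lemma the source bundle of $f$ is $(n+k+1)G+E+K_M=K_M^{-((n+k+1)l+a-1)}$, so taking $k=1$ and $a=m+1-(n+2)l$ makes this equal to $K_M^{-m}$; then the target bundle of the $h_i$ is $(n+1)G+E+K_M=K_M^{-(m-l)}$, so after relabelling the $h_i$ are the desired $u_i$ and their number is $p=\dim H^0(M,K_M^{-l})=N+1$. The hypothesis $m\ge(n+2)l+C+1$ is exactly what forces $a\ge C+2$, and with $k=1$ the universal factor $\frac{n+k}{k}$ in the lemma equals $n+1$, which is the $(n+1)$ in (\ref{effective}).

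Next I would set up the metrics. On $G=K_M^{-l}$ I use the $L^2$-metric $h^{\otimes l}$ and take $\{s_i\}$ orthonormal for $\int_M\langle\cdot,\cdot\rangle_{h^{\otimes l}}\,{\rm dv}_g$, so that the function $|s|^2=\sum_i|s_i|^2$ appearing in Siu's lemma is the Bergman kernel $\rho_l(M,g)$ and the assumption (\ref{partial assumption}) reads $c\le|s|^2\le c'$. On $E=K_M^{-a}$ I need a hermitian metric $e^{-\psi}$ of positive curvature whose ratio to $h^{\otimes a}$ is controlled only by $e^{\pm u}$ --- hence by $e^{\pm A}$, since $|u|\le A$ --- and by a power of $\rho_l$ --- hence by a power of $c'/c$; this is where both the Bakry-Em\'ery hypothesis ${\rm Ric}(g)+{\rm Hess}\,u\ge -Cg$ and the degree slack $a\ge C+2$ enter, the extra $C+1$ units of ampleness serving to absorb the negative term $-C\omega_g$, and the $\rho_l$-power being chosen so that the total power of $\rho_l$ in the induced metric on $K_M^{-m}$ is $m/l$.

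Granting such a metric, I would apply Siu's lemma to $f=s$: the finiteness $\int_M|s|^2 e^{-\psi}|s|^{-2(n+2)}\,{\rm dv}_g<\infty$ is automatic because $M$ is compact and $\rho_l\ge c>0$, and the lemma produces $u_i\in H^0(M,K_M^{-(m-l)})$ with $s=\sum_i u_i\otimes s_i$ and
\[
\int_M\frac{|u_i|^2 e^{-\psi}}{|s|^{2(n+1)}}\,{\rm dv}_g\ \le\ (n+1)\int_M\frac{|s|^2 e^{-\psi}}{|s|^{2(n+2)}}\,{\rm dv}_g.
\]
Rewriting both integrals in terms of $|\cdot|^2_{h^{\otimes(m-l)}}$ and $|\cdot|^2_{h^{\otimes m}}$, estimating $e^{-\psi}$ from above and below by $e^{\pm A}$ times powers of $h^{\otimes a}$ and $\rho_l$, and bounding the leftover powers of $\rho_l$ between $c$ and $c'$, gives (\ref{effective}); the two one-sided uses of $e^{\pm A}$, one on each side of the inequality, combine into $e^{2A}$, and the matching powers of $\rho_l$ into $(c'/c)^{m/l}$.

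I expect the only real difficulty to be the construction of the positively curved metric $e^{-\psi}$ on $E$. The Bakry-Em\'ery bound gives merely the lower bound ${\rm Ric}(\omega_g)+\sqrt{-1}\partial\bar\partial u\ge -C\omega_g$, and the delicate point is to upgrade this to a metric of strictly positive curvature on $K_M^{-a}$ while keeping the distortion relative to $h^{\otimes a}$ bounded in terms of $A$, $c$, $c'$ alone, i.e.\ without bringing in the Ricci potential of $g$, whose oscillation is not controlled by these constants. The hypothesis $m\ge(n+2)l+C+1$ is tailored precisely so that $a\ge C+2$ provides the ampleness budget for this; checking that the budget suffices and that the resulting weight has an $O(1)$ oscillation (so that it enters the final constant rather than the curvature), and that the exponents of $\rho_l$ then assemble into exactly $(c'/c)^{m/l}$, is the computational heart of the argument. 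The remaining bookkeeping in Siu's estimate is routine.
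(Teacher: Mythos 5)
You have the right global strategy (Siu's division lemma with $G=K_M^{-l}$), the right bundle bookkeeping, and you correctly locate where $n+1$, $e^{2A}$, and the powers of $c,c'$ must come from. The choice $k=1$ is a harmless deviation from the paper, which instead fixes $k$ by the Euclidean division $m-C-1=(n+k+1)l+r$ and bounds $\tfrac{n+k}{k}\le n+1$; both routes produce the same shape of estimate.

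However, at what you yourself call the "computational heart" --- the construction of the positively curved metric $e^{-\psi}$ on $E$ --- your plan is not repairable as stated, and you correctly sense why but draw the wrong conclusion. You insist on a metric on $E\cong K_M^{-a}$ whose distortion relative to $h^{\otimes a}$ is controlled by $A,c,c'$ (possibly via a power of $\rho_l$). No such metric exists under the stated hypotheses: the only curvature input you have is ${\rm Ric}(g)+\sqrt{-1}\partial\bar\partial u\ge -C\omega_g$, which controls the combination ${\rm Ric}(g)+{\rm Hess}\,u$ and not ${\rm Hess}\,u$ alone. Any positivity argument must therefore feed ${\rm Ric}(g)$ into the curvature of $e^{-\psi}$, and the ratio of the corresponding metric to $h^{\otimes a}$ is $e^{-u}$ times $e^{f_g}$, where $f_g$ is the Ricci potential of $g$ --- which is precisely the quantity you note is uncontrolled. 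Throwing a power of $\rho_l$ into $e^{-\psi}$ does not help either: $\sqrt{-1}\partial\bar\partial\log\rho_l$ is only bounded below by $-l\omega_g$, so either sign of the exponent costs you ampleness rather than saving it, and there is no choice that simultaneously fixes positivity and produces the exponent $m/l$.

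The resolution in the paper is structural rather than a budget estimate. One does \emph{not} treat $E$ as $K_M^{-a}$ with a metric close to $h^{\otimes a}$; one keeps the decomposition $E=(m-(n+k+1)l)L-K_M$ and equips $E$ with
$$h_1=h^{\otimes(m-(n+k+1)l)}\otimes e^{-u}\otimes \omega_g^n,$$
where $\omega_g^n$ is the natural volume-form metric on the $-K_M$ factor, whose curvature is ${\rm Ric}(g)$. Then
$${\rm Ric}(E,h_1)=(m-(n+k+1)l)\,\omega_g+{\rm Ric}(g)+\sqrt{-1}\partial\bar\partial u\ \ge\ (m-(n+k+1)l-C)\,\omega_g\ \ge\ \omega_g,$$
using exactly the Bakry--\'Emery bound and the degree hypothesis. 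The unbounded factor $e^{f_g}$ never enters the final estimate because the $-K_M$ in $E$ is paired against the explicit $+K_M$ in the Siu decomposition $mL=(n+k+1)G+E+K_M$: after taking norms, the $\omega_g^n$ metric on $-K_M$ and the dual metric on $K_M$ cancel against $d{\rm v}_g$, and one is left with $|u_i|^2_{h^{\otimes(m-l)}}\,e^{-u}\,d{\rm v}_g$ and $|s|^2_{h^{\otimes m}}\,e^{-u}\,d{\rm v}_g$, with no trace of $f_g$. From that point your bookkeeping with $|u|\le A$ and $c\le\rho_l\le c'$ goes through as you describe. So the gap is exactly the metric on $E$: it must carry the volume-form factor, and the reason its unbounded oscillation is harmless is not that it is $O(1)$ but that it cancels identically against $K_M$.
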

\begin{proof}
Putting $L=K_{M}^{-1}$ and $m-C-1=(n+k+1)l+r$ $ (0\leq r <l)$, we decompose $mL$ as
$$mL= (n+k+1)(lL)+((m-(n+k+1)l)L-K_M)+K_M.$$
Let  $h$ and $\omega_g^n$   be  two hermitian metrics on $L$ such that
$${\rm Ric}(L, h) =g, \, {\rm Ric}(L,\omega_g^n)={\rm Ric}(g).$$
Denote  the line bundle $(m-(n+k+1)l)L-K_M$ by $E$. Then    $h_1=h^{\otimes m-(n+k+1)l}\otimes e^{-u}\otimes \omega_g^n$ is a  hermitian  metric on $E$.
It is easy to see
  $${\rm Ric}(E,h_1)=(m-(n+k+1)l)\omega_g+{\rm Ric}(g)+\sqrt{-1}\partial\bar{\partial} u\geq \omega_g.$$
Now applying the above lemma to $G=lL$, $s_i$, $E$ and $f=s$, we  see that there are  $u_i\in H^0(M,(n+k)G+E+K_M)$ such that
 $$\int_M\frac{|u_i|^2_{h^{\otimes (n+k)l}\otimes h_1}}{(\Sigma_{i=0}^N |s_i|^2_{h^{\otimes l}})^{n+k}} {\rm dv }_g\\
\leq \frac{n+k}{k}\int_M \frac{|s|^2_{h^{\otimes (n+k+1)l}\otimes h_1}}{(\Sigma_{i=0}^N |s_i|^2_{h^{\otimes l}})^{n+k+1}}{\rm dv }_g.$$
The above  is equivalent to
\begin{align}
\int_M\frac{|u_i|^2_{h^{\otimes m-l}}}{(\Sigma_{i=0}^N |s_i|^2_{h^{\otimes l}})^{n+k}}e^{-u}{\rm dv }_g\leq \frac{n+k}{k}\int_M\frac{|s|^2_{h^{\otimes m}}}{(\Sigma_{i=0}^N |s_i|_{h^{\otimes l}}^2)^{n+k+1}}e^{-u}{\rm dv }_g.\notag
\end{align}
By (\ref{partial assumption}),  it follows
\begin{align}
\frac{1}{e^{2A} c'^{n+k}}\int_M  |u_i|^2_{h^{\otimes m-l}}{\rm dv }_g\leq \frac{n+k}{kc^{n+k+1}}\int_M |s|^2_{h^{\otimes m}}{\rm dv }_g,\notag
\end{align}
which implies (\ref{effective}) immediately.
\end{proof}

\vskip3mm

\end{document}